\documentclass[11pt]{article}
\usepackage[margin=1.3in]{geometry}
\usepackage{graphicx} % Required for inserting images
\usepackage{authblk}  % For better author formatting
\usepackage{hyperref} % For clickable email links
\usepackage[dvipsnames]{xcolor}
\usepackage{amsmath, amssymb, amsfonts,amsthm}

\usepackage{subcaption}
\usepackage{graphicx}
\usepackage[dvipsnames]{xcolor}
\usepackage{url}
\usepackage{caption}
\usepackage{bm}
\usepackage{array} % Required for tabular

\usepackage{nicefrac}
\usepackage{graphicx}      % include this line if your document contains figures
\usepackage{tikz}
\usepackage{pgfplots}
\pgfplotsset{compat=1.8}
\usepackage{pgfplotstable}
\usetikzlibrary{3d, calc, decorations.markings}
\usepackage{placeins}    % Ensures figures do not float past barriers
\pgfplotsset{compat=1.17}

\usepackage[shortlabels]{enumitem}

\newcommand{\N}{\mathbb{N}}
\newcommand{\C}{\mathbb{C}}

\newcommand{\diag}{\text{diag}}
\DeclareMathOperator{\sgn}{sgn}

\newtheorem{theorem}{Theorem}[section]
\newtheorem{proposition}[theorem]{Proposition}
\newtheorem{lemma}[theorem]{Lemma}
\newtheorem{corollary}[theorem]{Corollary}

\theoremstyle{definition}
\newtheorem{definition}{Definition}[section]
\newtheorem{assumption}{Assumption}[section]

\theoremstyle{remark}
\newtheorem{remark}{Remark}[section]

\usepackage[style= numeric-comp,hyperref=true, doi=false,url=false,
            isbn=false,
            firstinits=true, sorting = none, 
            block=none, backend=bibtex,maxnames=99]{biblatex}
\renewbibmacro{in:}{} 
\bibliography{references}

\makeatletter
\def\blfootnote{\xdef\@thefnmark{}\@footnotetext}
\makeatother

\makeatletter
\newcommand{\oset}[3][0ex]{%
  \mathrel{\mathop{#3}\limits^{
    \vbox to#1{\kern-1\ex@
    \hbox{$\scriptstyle#2$}\vss}}}}
\makeatother

\title{Feedback Diagonalization and Stabilization for Discrete Linear Ensemble Systems}

\begin{document}
\author{Jonathan M. Bosnich \quad \mbox{and} \quad Xudong Chen
}
\date{}
\maketitle
\blfootnote{J. M. Bosnich and X. Chen are with the Department of Electrical and Systems Engineering, Washington University in St. Louis, St. Louis, MO. Emails: \texttt{\{bosnich,cxudong\}@wustl.edu}.}
\blfootnote{Corresponding author: J. M. Bosnich.}

%%%%%%%%%%%%%%%%%%%%%%%%%% Option 1 %%%%%%%%%%%%%%%%%%%%%%%%%%%
\begin{abstract}
We consider a countably infinite collection of linear, scalar control systems, where each system is represented by the pair $(a_n,b_n)$ with $a_n>0$ for $n\in \N$, and all systems are forced by a common scalar control input. We refer to this collection of systems as a \textit{discrete linear ensemble system}. The ensemble system is \textit{feedback stabilizable} if there exists a common feedback control input that asymptotically stabilizes every system simultaneously. Unlike finite-dimensional linear systems, an infinite-dimensional linear system is not guaranteed to be stable if its poles lie in the open left half-plane. Stability is guaranteed, however, if \textit{(i)} its poles are contained in the closed left half-plane and \textit{(ii)} its infinitesimal generator is diagonalizable. In this paper, we provide necessary and sufficient conditions for the existence of a static, linear feedback control law that ensures the closed-loop ensemble system satisfies \textit{(i)} and \textit{(ii)}. In particular, we show that the exponential decay of the sequences $(|b_n|)_{n\in \N}$ and $(a_n/|b_n|)_{n\in \N}$ is necessary and, under an assumption on the desired poles, sufficient. 
\end{abstract}

%%==================================%%
%% Introduction
%%==================================%%

\section{Introduction}\label{sec: intro}
Ensemble control theory is the study of steering a large (in the limit, infinite) population of individual systems using a single finite-dimensional control input applied uniformly across the entire population. Since ensemble control influences networks of dynamical systems at the population level, it is a framework well suited for controlling large, complex systems, e.g., quantum spin systems controlled by a magnetic field \cite{glaser1998unitary, brockett2000stochastic, li2006control}, neural systems controlled by optogenetic stimulation \cite{ching2013control, mardinly2018precise}, molecular systems controlled by the stimulus of light \cite{yanlei2003photomechanics} and heat \cite{taniguchi2018walking}, and swarms of heterogeneous robots controlled by a single broadcast signal \cite{becker2014controlling, becker2017controlling}. Motivated by its broad applicability, there has been active development of the mathematical foundations of ensemble control theory. Fundamental control concepts have been studied for a variety of ensemble systems, including controllability \cite{triggiani1975stabilizability, chen2023controllability, agrachev2016ensemble}, observability \cite[Chapter~12]{fuhrmann2015mathematics}, \cite{chen2019structure, chen2020ensemble}, and optimal control \cite{scagliotti2023optimal}, often extending classical results from the finite-dimensional case. However, a precise understanding of when an ensemble system is \textit{feedback stabilizable} remains a challenging open problem, largely because finite-dimensional results do not extend to the infinite-dimensional setting. We will expound upon this point shortly. Recently, a partial solution to this open problem was provided by Chen in \cite{chen2025poleplacement}. In this paper, we extend and generalize the results of this prior work. We begin by introducing the ensemble system addressed in \cite{chen2025poleplacement}.

\subsection{Problem formulation}\label{subsec: setup}
Consider the following countably infinite ensemble of linear, scalar systems with a common scalar control input:
\begin{align}
    \dot x_n(t) = a_nx_n(t) +b_n u(t), \quad \text{for all } n\in \N,
    \label{eqn: the individual systems}
\end{align}
where $\N$ denotes the set of positive integers. We define the sequence\footnote{We use $x = (x_n)$ or simply $(x_n)$ to denote the infinite sequence $(x_n)_{n\in \N}$.} $x(t) := (x_n(t)) \in X$, where $X$ can be any of the following Banach sequence spaces over $\C$: $\ell^p$ for $1\leq p \leq \infty$; $c$, the space of convergent sequences; or $c_0$, the space of null sequences, i.e., sequences that converge to zero. We denote by $\|\cdot\|_{\ell^p}$ the $\ell^p$-norm for $1\leq p \leq \infty$, and we equip $c$ and $c_0$ with the $\ell^\infty$-norm, i.e., the sup norm. One can think of the sequence $x(t)$ as the state of the ensemble system with $X$ the state space. The ensemble system is defined by the sequences $a := (a_n) \in \ell^\infty$ and $b := (b_n) \in X$. For the case where $X=c$, we further require $a\in c$ to ensure that $x(t)\in X$ for all $t\geq 0$. Finally, $u(t)$ is a complex-valued and locally integrable scalar control input applied uniformly to each individual system.  We can rewrite \eqref{eqn: the individual systems} as the following infinite-dimensional linear control system:
% %% more concise version for SIAM below %%
% \noindent We define the sequence\footnote{We use $x = (x_n)$ or simply $(x_n)$ to denote the infinite sequence $(x_n)_{n\in \N}$.} $x(t) := (x_n(t)) \in X$, where $X$ can be any of the following Banach sequence spaces over $\C$: $\ell^p$ for $1\leq p \leq \infty$; $c$, the space of convergent sequences; or $c_0$, the space of null sequences, i.e., sequences that converge to zero. We denote by $\|\cdot\|_{\ell^p}$ the $\ell^p$-norm for $1\leq p \leq \infty$, and we equip $c$ and $c_0$ with the $\ell^\infty$-norm, i.e., the sup norm. One can think of the sequence $x(t)$ as the state of the ensemble system with $X$ the state space. Let $a:=(a_n)\in \ell^\infty$, $b:=(b_n)\in X$, and $u(t)$ be complex-valued and locally integrable. For the case where $X=c$, we further require $a\in c$ to ensure that $x(t)\in X$ for all $t\geq 0$. We can rewrite \eqref{eqn: the individual systems} as the following infinite-dimensional linear control system:
 \begin{align}
     \dot x(t) = Ax(t) + bu(t),
     \label{eqn: infinite-dim ensemble system}
 \end{align}
  where $A : X \to X$ is a diagonal operator with rule $(x_n) \mapsto (a_nx_n)$, which we represent as the infinite-dimensional diagonal matrix $A=\diag(a_1, a_2, \ldots)$. Let $\mathcal{B}(X)$ be the space of all bounded linear operators from $X$ to $X$ and denote by $\|\cdot \|_{\mathcal{B}(X)}$ the operator norm. Note that $A\in \mathcal{B}(X)$ since $(a_n)\in \ell^\infty$. Let $X^*$ be the dual space of $X$, i.e., the space of all bounded linear functionals from $X$ to $\C$. Applying linear feedback control $u(t) = kx(t)$ for some $k \in X^*$, system \eqref{eqn: infinite-dim ensemble system} becomes
 \begin{align}
    \dot x(t) = T_kx(t), \quad \text{where } T_k :=A+bk.
    \label{eqn: closed-loop ensemble system}
\end{align}
We refer to \eqref{eqn: closed-loop ensemble system} as the \textit{closed-loop} system and $T_k$ as its infinitesimal generator. Note that $T_k$ is a rank-one perturbation of $A$, and hence $T_k \in \mathcal{B}(X)$. 

We assume that the ensemble system \eqref{eqn: the individual systems} is unstable in the absence of control input, i.e., when $u(t)=0$. While this assumption only requires that at least one individual system is unstable, for our purposes, we shall assume that all individual systems are unstable.
\begin{assumption}
    Each $a_n$ is a positive real number.
    \label{ass: positive a_n}
\end{assumption} 

\noindent We will establish necessary and sufficient conditions for the existence of a $k\in X^*$ such that system \eqref{eqn: closed-loop ensemble system} is stable, where we use the following notion of stability.
\begin{definition}
    System \eqref{eqn: closed-loop ensemble system} is \textbf{stable} if there exists a constant $C > 0$ such that for every $x(0) \in X$ and every $t\geq 0$, it holds that $\| x(t) \|_X \leq C\|x(0)\|_X$. Further, system \eqref{eqn: closed-loop ensemble system} is \textbf{asymptotically stable} if it is stable and, additionally, $\lim_{t\to \infty} x(t) = 0$ for any initial condition $x(0) \in X$. 
\end{definition}

\begin{remark}
     We mention a third, stronger notion of stability: system~\eqref{eqn: closed-loop ensemble system} is \textit{exponentially stable} if there exists constants $M\geq 1$ and $\alpha>0$ such that for every $x(0)\in X$ and every $t\geq 0$, it holds that $\|x(t)\|_X \leq Me^{-\alpha t} \|x(0)\|_X$. We note that under Assumption~\ref{ass: positive a_n}, the closed-loop system~\eqref{eqn: closed-loop ensemble system} can never be exponentially stable (see, e.g.,~\cite[Chapter 5]{curtain2012introduction}).
\end{remark}

To understand why the stability of system \eqref{eqn: closed-loop ensemble system} differs starkly from that of a finite-dimensional linear system, let us recall well-known necessary and sufficient conditions for the (asymptotic) stability of linear systems. Let $\Sigma(T_k)$ denote the spectrum of $T_k$, which we will also refer to as the set of \textit{poles} of $T_k$. Then, a necessary condition for the stability of system \eqref{eqn: closed-loop ensemble system} is that  
\begin{align}
    \Sigma (T_k) \subseteq H:=\{z \in \C \mid \text{Re}(z)\leq 0 \}.
    \label{cond: stable poles}
\end{align}
For our purposes, we refer to the existence of a $k\in X^*$ such that \eqref{cond: stable poles} is satisfied as \textit{pole placement} (in the closed left half-plane). 
Next, we recall a sufficient condition for asymptotic stability of \textit{finite-dimensional} linear systems. Let $\Sigma_{\text{disc}}(T_k) \subseteq \Sigma(T_k)$ be the discrete spectrum of $T_k$. If $T_k$ is the infinitesimal generator of a finite-dimensional linear system, then $\Sigma(T_k) = \Sigma_{\text{disc}}(T_k)$ and asymptotic stability of the system is guaranteed if it holds that
\begin{align}
    \Sigma_{\text{disc}} (T_k) \subseteq H^- :=\{z \in \C \mid \text{Re}(z) < 0 \}.
    \label{cond: strictly stable poles}
\end{align}
In contrast to the finite-dimensional case, if $T_k$ from \eqref{eqn: closed-loop ensemble system} satisfies condition \eqref{cond: strictly stable poles}, it is \textit{not} guaranteed that system \eqref{eqn: closed-loop ensemble system} is asymptotically stable. For example, let $X = \ell^\infty$ and consider the system $\dot x(t) = Ax(t)$, where $A = \diag(-1, -1/2, -1/3, \ldots)$. Clearly, $\Sigma_{\text{disc}}(A) \subseteq H^-$ is satisfied; however, given $x(0)=(1, 1, \ldots)$, the solution is $x(t) = e^{At}x(0) = (e^{-t}, e^{-t/2}, \ldots)$, which implies that $\|x(t)\|_{\ell^\infty} = 1$ for all $t\geq 0$. Worse still, satisfying condition~\eqref{cond: strictly stable poles} does \textit{not} even guarantee that system~\eqref{eqn: closed-loop ensemble system} is \textit{stable}. Take, for example, the system $\dot x(t) = Jx(t)$ defined on $X=\ell^2$, where $J$ is an infinite-dimensional Jordan block with $-1/2$ on the diagonal. This system is shown to be unstable in \cite{azamov2023stability}. In fact, it can be shown that if $J = \diag(J_1, J_2, \cdots)$ is in the ``Jordan normal form,'' where each $J_i$ is an $n_i \times n_i$ Jordan block with $-1/2$ on the diagonal, and the $n_i$'s are not uniformly bounded, then the system $\dot x(t) = Jx(t)$ is unstable. Note that these obstructions to stability are alleviated if $T_k$ is diagonalizable, i.e., if there exists a similarity transformation that converts $T_k$ into an infinite-dimensional diagonal matrix whereby stability of system~\eqref{eqn: closed-loop ensemble system} follows immediately if \eqref{cond: stable poles} is satisfied. However, ensuring that $T_k$ is diagonalizable is challenging due to the following fact: even if the algebraic multiplicity of each $\lambda \in \Sigma_{\text{disc}}(T_k)$ is one, $T_k$ is \textit{not necessarily} diagonalizable. Motivated by this, we address in this paper the problem of feedback diagonalizability, i.e., the existence of a $k\in X^*$ such that $T_k$ is diagonalizable, in conjunction with pole placement.

%%%%%%%%%%%%%%%%%%%%%%%%%%%%%%%%%%%%%%%%%%%%%%%%%%%%%%%%%%%%
\subsection{Literature review}\label{subsec: literature}
The literature addressing feedback stabilizability of infinite ensemble systems is very sparse. The most relevant work for our purposes is~\cite{chen2025poleplacement}, where Chen provides several necessary or sufficient conditions for pole placement and feedback stabilizability for discrete linear ensemble systems of the form~\eqref{eqn: the individual systems}. Some relevant results will be reproduced shortly. Other relevant work includes~\cite{chittaro2018asymptotic}, in which the authors addressed the feedback stabilization problem for a discrete ensemble of Bloch equations, a bilinear control system. For work on feedback stabilizing \textit{finite} ensemble systems, we mention~\cite{ryan2014simultaneous, guth2025ensemble}. Beyond ensemble systems described by ordinary differential equations, we mention~\cite{alleaume2024ensembles}, where the authors addressed the problem of stabilizing an infinite ensemble of hyperbolic partial differential equations (PDEs).

Putting our problem in a broader context, we discuss a few relevant works on feedback stabilization for infinite-dimensional linear control systems $\dot x(t) = Ax(t) + Bu(t)$, where $u(t)$ is not necessarily finite-dimensional. It is well known (see, e.g.,~\cite[Chapter 6]{curtain2012introduction}) that if $(A, B)$ is exactly null controllable, then the system is (exponentially) feedback stabilizable. However, if $B$ is a compact operator (as is the case for the linear ensemble system~\eqref{eqn: infinite-dim ensemble system}), then $(A, B)$ is never exactly controllable~\cite{triggiani1975controllability}. Moreover, counterexamples have been exhibited in~\cite{triggiani1975stabilizability} which demonstrate that approximate controllability does not imply feedback stabilizability. On a positive side, it has been shown in~\cite{benchimol1977feedback} that if the semigroup generated by $A$ is similar to a contraction
semigroup, then approximate controllability implies weak feedback stabilizability. 
In~\cite{slemrod1972linear,slemrod1974note}, the author has introduced  the so-called ``$D_\epsilon$-assumption'', which requires that  the associated controllability gramian $W(\epsilon)$ is a bounded, invertible linear operator for some~$\epsilon > 0$. The author has shown that if the $D_\epsilon$-assumption and a few other conditions are satisfied, then the linear system is feedback stabilizable. However, it remains open when an $(A, B)$ pair can satisfy the assumption.

Finally, we mention the problem of \textit{simultaneous stabilization} (see, e.g.,~\cite{blondel1994simultaneous, sontag1985introduction,ghosh1985some,schonlein2026ensemble} and references therein). We point out that simultaneous stabilization is fundamentally different from ensemble feedback stabilization. The former stabilizes each individual system via a common (or parameter-dependent) feedback \textit{gain} $k$, e.g., in our discrete ensemble setting, simultaneous stabilization is achieved if there exists a scalar $k$ such that each individual closed-loop system $\dot x_n(t) = (a_n + b_n k) x_n(t)$ is asymptotically stable. In contrast, the latter stabilizes the ensemble system via a common feedback \textit{input} $u(t) = kx(t)$. This subtle difference leads to a significant distinction in closed-loop systems: under simultaneous stabilization, all individual systems are \textit{completely decoupled}, while under ensemble feedback stabilization, all systems are \textit{fully coupled} through the (full-state) common feedback input.

\subsubsection{Results from the prior work}
We recall here several key results from \cite{chen2025poleplacement}, beginning with the following theorem on the feasibility of pole placement.

\begin{theorem}[{\cite[Theorem 1]{chen2025poleplacement}}]
    Let $(a_n)\in \ell^\infty$, with $a_n > 0$ for $n\in \N$, and $(b_n)\in X$. Suppose that there exists a $k\in X^*$ such that \eqref{cond: stable poles} is satisfied; then, the following hold:
    \begin{enumerate}
        \item $(a_n)\in c_0$ and, moreover, $a_n \neq a_m$ for all $n\neq m$;
        \item $b_n \neq 0$ for all $n\in \N$.
    \end{enumerate}
    \label{Chen thm: distinct a_n's}
\end{theorem}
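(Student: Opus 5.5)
The argument rests on two facts about rank-one perturbations. First, $T_k = A + bk$ and $A$ have the same essential spectrum. Second, each point $a_n$ that persists in the spectrum of $T_k$ forces an obstruction. Throughout, suppose $k\in X^*$ is such that $\Sigma(T_k)\subseteq H$.

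\emph{Step 1: $(a_n)\in c_0$.} Since $bk$ is rank one, hence compact, Weyl's theorem gives that the essential spectrum of $T_k$ equals that of $A$. For the diagonal operator $A$ on any of the sequence spaces $X$, the spectrum is the closure of $\{a_n\}$. Every accumulation point of $(a_n)$ lies in the essential spectrum, and so does every value $a_n$ repeated infinitely often, since it has an infinite-dimensional eigenspace. Now suppose $(a_n)\notin c_0$. Since $a_n>0$ and $(a_n)$ is bounded, some subsequence converges to a limit $\alpha>0$. Then $\alpha\in\Sigma_{\rm ess}(A)=\Sigma_{\rm ess}(T_k)\subseteq\Sigma(T_k)$, contradicting $\Sigma(T_k)\subseteq H$. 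Hence $a_n\to 0$.

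\emph{Step 2: $b_n\neq 0$ and the $a_n$ are distinct.} Fix $n$ and consider the point $\lambda=a_n>0$; we show each failure of the claims puts $\lambda$ in $\Sigma(T_k)$.
\begin{itemize}
\item If $b_n=0$, then row $n$ of $T_k-\lambda I$ is $(a_n-\lambda)x_n=0$ for every $x$. So $T_k-\lambda I$ is not surjective, since $e_n\notin$ range, and therefore $\lambda\in\Sigma(T_k)$. This contradicts $\lambda\notin H$, so $b_n\neq0$.
\item If $a_n=a_m$ with $m\neq n$ (and both $b_n,b_m\neq0$), take $v=b_me_n-b_ne_m\in X$. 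Then $Av=\lambda v$, and $kv$ is a scalar. If $kv=0$, then $T_kv=\lambda v$, so $\lambda$ is an eigenvalue.
\end{itemize}
In the remaining case $kv\neq0$, I would argue through range instead. Any $y$ in the range of $T_k-\lambda I$ satisfies $y_n = b_n\,k(x)$ and $y_m = b_m\,k(x)$, because the diagonal parts vanish in coordinates $n$ and $m$. Hence $y_n/b_n=y_m/b_m$, so $e_n$ is not in the range and $\lambda\in\Sigma(T_k)$. In fact this range argument alone suffices and avoids the case split. Either way $a_n\in\Sigma(T_k)$ with $\operatorname{Re}(a_n)>0$, a contradiction.

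\emph{Main obstacle.} The main care is needed in Step 1: justifying the essential-spectrum facts uniformly across all choices of $X$, including $\ell^\infty$ and $c$. As a fallback that avoids Weyl's theorem, I would argue directly. Fix $\alpha>0$ as above and consider $\lambda$ near $\alpha$ but off $\{a_n\}$. The resolvent is given by the Sherman–Morrison-type formula, which requires
\[
1-k\bigl((A-\lambda)^{-1}b\bigr)\neq 0 .
\]
Analyzing this function, which is meromorphic off $\overline{\{a_n\}}$, near $\alpha$ shows that $\alpha$ itself cannot be in the resolvent set. This is because $(A-\alpha)$ restricted to the infinite-codimension piece has no bounded inverse, and a rank-one correction cannot repair that.
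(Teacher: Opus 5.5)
The paper gives no proof of this statement; it imports it from \cite{chen2025poleplacement}. There is therefore no in-paper route to compare against. Judged on its own, your proof is correct.

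\textbf{Step 2.} This step is complete. Suppose $a_m=a_n$ with $m\neq n$. The $n$th and $m$th coordinates of $(T_k-a_nI)x$ are $b_n\,kx$ and $b_m\,kx$, so every $y$ in the range satisfies $b_my_n=b_ny_m$, and this excludes $e_n$ once $b_n\neq 0$. If $b_n=0$, the $n$th coordinate of the range vanishes identically. The argument uses only $e_n\in X$ and boundedness of $k$, not the series form of $k$, so it also covers $\ell^\infty$ and $c$.

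\textbf{Step 1.} Two points should be made explicit.
\begin{enumerate}
\item The Weyl theorem you need is the one for the Fredholm essential spectrum $\{z : T-z \text{ is not Fredholm}\}$. That set is invariant under compact perturbations and contained in $\Sigma(T)$ on every complex Banach space. So the ``main obstacle'' for $\ell^\infty$ and $c$ does not actually arise. It is not the paper's $\Sigma_{\mathrm{ess}}:=\Sigma\setminus\Sigma_{\mathrm{disc}}$, which in general is not stable under rank-one perturbations.
\item The claim that $\alpha$ lies in the essential spectrum of $A$ needs one line. If $a_m=\alpha$ for infinitely many $m$, then $\ker(A-\alpha)$ is infinite dimensional. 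Otherwise $\ker(A-\alpha)$ is spanned by finitely many $e_m$. Along the remaining indices, $\|(A-\alpha)e_{n_j}\|=|a_{n_j}-\alpha|\to 0$ while $\mathrm{dist}(e_{n_j},\ker(A-\alpha))\ge 1$, so $A-\alpha$ has non-closed range.
\end{enumerate}
With this in place, the Sherman--Morrison fallback is unnecessary; its last sentence is just this Fredholm argument in words.

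\textbf{An elementary alternative.} You can avoid Fredholm theory entirely. Choose unit vectors $v_j\in\mathrm{span}\{e_{n_{2j}},e_{n_{2j+1}}\}$ with $kv_j=0$; this is possible because a functional on a two-dimensional space has a nontrivial kernel. Then $(T_k-\alpha)v_j=(A-\alpha)v_j$, whose norm is at most $\max\{|a_{n_{2j}}-\alpha|,|a_{n_{2j+1}}-\alpha|\}\to 0$. So $\alpha$ is an approximate eigenvalue of $T_k$, hence lies in $\Sigma(T_k)$, contradicting $\alpha>0$. This handles repeated values and genuine accumulation points at once and works verbatim on every $X$. Your Weyl route instead buys the structural identity of the Fredholm essential spectra of $T_k$ and $A$, at the cost of importing Fredholm theory.
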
 

\noindent Furthermore, Theorem 3 of \cite{chen2025poleplacement} says, roughly speaking, that pole placement is feasible if and only if $(a_n)$ decays at a rate of at least $O(1/n^2)$ and that the ratio $(a_n/|b_n|)$ does not grow exponentially. We will shortly see that there is a discrepancy between these decay rates and the decay rates that are sufficient for feedback stabilization.

\begin{remark}
    Suppose that pole placement is feasible; then, by item 1 of Theorem \ref{Chen thm: distinct a_n's}, $A$ is a compact operator. Since $T_k$ is a rank-one perturbation of $A$, $T_k$ is compact as well. Note that if an operator $P$ is compact, then 
    $\Sigma_{\text{ess}}(P) = \{0\}$, where $\Sigma_{\text{ess}}(P) := \Sigma(P)\setminus \Sigma_{\text{disc}}(P)$ is the essential spectrum of $P$. Consequently, if $\lambda$ is a nonzero eigenvalue of $P$, then $\lambda \in \Sigma_{\text{disc}}(P)$, and its generalized eigenspace is finite dimensional.
\end{remark}

We now turn our attention to the feedback stabilizability of system \eqref{eqn: closed-loop ensemble system} and present a constructive sufficient condition established in \cite{chen2025poleplacement}. The key to this sufficient condition is that it guarantees that $T_k$ is \textit{diagonalizable}, i.e., there exist bounded linear operators $P:X\to X$ and $Q:X \to X$ that are inverses of each other and satisfy $T_k = P\Lambda Q$, where $\Lambda=\diag(\lambda_1, \lambda_2, \ldots)$ with $\{\lambda_1, \lambda_2, \ldots\} = \Sigma_{\text{disc}}(T_k)$. In order to state the sufficient condition, we must first introduce some preliminary definitions and results.
\begin{definition}
    Let $(x_n)$ be a positive real sequence. If there exist constants $\alpha > 0$ and $r \in (0,1)$ such that 
        \begin{align*}
            \frac{x_{m}}{x_n} \leq \alpha r^{m-n} \quad \text{for all } m\geq n,
        \end{align*}
        then we say that $(x_n)$ \textbf{uniformly exponentially decays}. We will often refer to this form of decay as $\bm{(\alpha,r)}$\textbf{-exponential decay} for ease of specifying the parameters~$\alpha$ and~$r$.
    \label{defn: exponential decay}
\end{definition}
Next, we look at the particular form that the feedback $k\in X^*$ will take. First, we note that for all cases of $X$ considered in this paper, except for $\ell^\infty$ and $c$, we are able to represent $k = (k_n) \in X^*$ as 
\begin{align}
    k : x \mapsto kx = \sum_{n=1}^\infty k_n x_n \in \C.
    \label{eqn: k acting as a sum}
\end{align}
When $X = \ell^\infty$, then $X^*$ contains $\ell^1$ as a proper subspace, which implies that not every $k\in (\ell^{\infty})^*$ takes the form \eqref{eqn: k acting as a sum}. When $X = c$, then $X^* \cong \ell^1$; however, the representation of $k\in c^*$ includes an additional term $k_0$ such that $kx=\sum_{n=0}^\infty k_n x_n$, where $x_0:=\lim_{n\to \infty} x_n$. For the remainder of the paper, we will only consider feedback $k\in X^*$ that may be represented by \eqref{eqn: k acting as a sum}. We now construct the feedback law. Given two sequences $x=(x_n)$ and $y=(y_n)$, where $y$ has entries that are nonzero and satisfy $y_n\neq y_m$ for all $n\neq m$, we define the sequence $\pi(x,y) = (\pi_n(x,y))$ as follows:
\begin{align}
    \pi_n(x, y) := \lim_{N\to \infty} \prod_{\substack{m=1\\m\neq n}}^N \frac{1-x_m /y_n}{1 - y_m/y_n} = \prod_{\substack{m=1\\m\neq n}}^\infty \frac{1- x_m /y_n}{1 - y_m/y_n}, \quad \text{for }n\in \N.
    \label{defn: pi(lambda, a)}
\end{align}
Let $c_H :=  \{(\lambda_n) \mid \lambda_n \in H \text{ for } n\in \N\}$, where $H$ is the closed left half-plane. Assuming that pole placement is feasible so that item 2 of Theorem \ref{Chen thm: distinct a_n's} holds, for any null sequence $\lambda :=(\lambda_n) \in c_H$, we define the sequence of feedback gains $k(\lambda) = (k_n(\lambda))$ as follows:
\begin{align}
    k_n(\lambda) := -\frac{(a_n - \lambda_n)}{b_n} \pi_n(\lambda,a), \quad \text{for } n \in \N.
    \label{defn: feedback k}
\end{align}
Note that this form of feedback law is an infinite-dimensional extension of Ackermann's formula \cite{ackermann1972entwurf}. The following theorem guarantees that if the feedback given by \eqref{defn: feedback k} is an element of $X^*$, then $\lambda$ coincides with the poles of the closed-loop system \eqref{eqn: closed-loop ensemble system} with $k=k(\lambda)$.

\begin{theorem}[{\cite[Theorem 2]{chen2025poleplacement}}]
    Let $\lambda \in c_H$ be a null sequence and let $k(\lambda)$ be given by~\eqref{defn: feedback k}. If $k(\lambda)\in X^*$, then $ \Sigma (T_{k(\lambda)}) = \{\lambda_n \mid n \in \N \} \cup \{0\}$.
    % \vspace{-\baselineskip}
    \label{Chen thm: lambda is the spectrum of T_k}
\end{theorem}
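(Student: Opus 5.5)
We must show $\Sigma(T_{k(\lambda)}) = \{\lambda_n\mid n\in\N\}\cup\{0\}$, where $k=k(\lambda)\in X^*$ and $T_k = A+bk$. The plan is to reduce the spectrum to the zeros of a scalar characteristic function and then identify that function with an infinite product.

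\textbf{Step 1 (reduction to a scalar equation).} The sequence $(a_n)$ is null: $k\in X^*$ with $b_n\neq 0$ requires pole placement to be meaningful, and item 1 of Theorem~\ref{Chen thm: distinct a_n's} gives $(a_n)\in c_0$ with distinct entries. Hence $A$, and therefore $T_k$, is compact, so $0\in\Sigma(T_k)$ because $X$ is infinite dimensional. Every nonzero point of $\Sigma(T_k)$ is then an eigenvalue. For $z\notin\{a_n\}\cup\{0\}$, the operator $A-z$ is invertible. The rank-one resolvent identity gives
\[
T_k - z = (A-z)\bigl(I + (A-z)^{-1}bk\bigr),
\]
so $z\in\Sigma(T_k)$ if and only if
\[
F(z):=1+k(A-z)^{-1}b = 1+\sum_n \frac{k_nb_n}{a_n-z}=0 .
\]
The series converges absolutely because $(A-z)^{-1}b\in X$ and $k$ is given by \eqref{eqn: k acting as a sum}.

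The points $z=a_m$ need separate handling. An eigenvector $x$ of $T_k$ for the eigenvalue $a_m$ satisfies $(a_n-a_m)x_n=-b_n(kx)$. If $kx=0$, then $x=e_m$ and $k_m=0$. If $kx\neq 0$, then $x_m$ is unconstrained but $x_n=-b_n(kx)/(a_n-a_m)$ for $n\neq m$, and a consistency condition follows. I would show that $a_m$ is an eigenvalue exactly when $\lambda_j=a_m$ for some $j$. Since $k_m=-(a_m-\lambda_m)\pi_m/b_m$ and $\pi_m$ vanishes precisely when some $\lambda_j=a_m$, this should follow from the formula for $k_m$, possibly by a limiting argument.

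\textbf{Step 2 (identify $F$ with a product).} The main claim is
\[
F(z)=\prod_{n=1}^\infty \frac{\lambda_n - z}{a_n - z}=\prod_n \frac{1-\lambda_n/z}{1-a_n/z}
\]
for $z\notin\{a_n\}\cup\{0\}$. To prove it, truncate. Let $F_N(z)=\prod_{n\le N}(\lambda_n-z)/(a_n-z)$. The partial-fraction expansion of this rational function (it tends to $1$ at infinity and has simple poles at $a_1,\dots,a_N$) is
\[
F_N(z)=1+\sum_{n\le N}\frac{r_n^{(N)}}{a_n-z},
\qquad
r_n^{(N)}=(\lambda_n-a_n)\prod_{m\le N,\,m\neq n}\frac{\lambda_m-a_n}{a_m-a_n}.
\]
This gives $r_n^{(N)}=-(a_n-\lambda_n)\pi_n^{(N)}(\lambda,a)$, hence $r_n^{(N)}\to k_nb_n$ by \eqref{defn: pi(lambda, a)} and \eqref{defn: feedback k}.

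\textbf{Main obstacle: passing to the limit.} I need $\sum_{n\le N} r_n^{(N)}/(a_n-z)\to\sum_n k_nb_n/(a_n-z)$, which requires interchanging the limit in $N$ with the sum. The approach I would try first is to avoid dominated convergence in $n$. Write $F_N$ as a Cauchy contour integral over a large circle minus small circles around $a_1,\dots,a_N$. Alternatively, use analyticity: the partial products converge locally uniformly on $\C\setminus(\{a_n\}\cup\{0\})$ because $\sum|\lambda_n-a_n|/|z|$ is locally bounded. Then compare residues. Both $F$ and the infinite product $P$ are meromorphic on $\C\setminus\{0\}$ with simple poles at the $a_n$. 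The residue of $P$ at $a_n$ equals $\lim_N r_n^{(N)}\cdot(-1)=-k_nb_n$, which matches the residue of $F$. So $F-P$ is analytic on $\C\setminus\{0\}$ and tends to $0$ as $|z|\to\infty$. The remaining difficulty is ruling out an essential singularity at $0$. I would try to control this using the growth of $F$ and $P$ along circles $|z|=\rho$ chosen to avoid the $a_n$ and $\lambda_n$, and this is the step I expect to be the main obstacle.

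\textbf{Step 3 (conclusion).} Granting the product identity, the product has zeros exactly at the $\lambda_n$ that are not cancelled by a pole. A cancellation $\lambda_n=a_j$ is the case handled separately in Step~1. Every $\lambda_n\neq 0$ is then an eigenvalue of $T_k$, and no other nonzero $z$ is. Adding $0$ yields $\Sigma(T_{k(\lambda)})=\{\lambda_n\}\cup\{0\}$.
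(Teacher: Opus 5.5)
\textbf{The gap is Step 2, the step that carries the theorem.} Your reduction to $F(z)=1+\sum_n k_nb_n/(a_n-z)$ is correct, and so is the finite partial-fraction identity with $r_n^{(N)}=-(a_n-\lambda_n)\pi_n(\lambda,a;N)$. But the passage $F=\lim_N F_N$ is left open, and the residue route cannot close it as stated. The $a_n$ accumulate at $0$, so you have no growth control there. A priori $F-P$ could be any function analytic on $\C\setminus\{0\}$ that vanishes at $\infty$, e.g.\ $e^{1/z}-1$. Your claim that $P$ converges locally uniformly also rests on $\sum_n|\lambda_n-a_n|<\infty$, which you never derive from the hypotheses.

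The missing idea is that dominated convergence, which you tried to avoid, comes for free from the sign structure. Index $(a_n)$ decreasingly, as in \S2. For $n\le N<m$, every omitted factor of $\pi_n(\lambda,a)$ satisfies
\[
\left|\frac{1-\lambda_m/a_n}{1-a_m/a_n}\right|=\frac{|a_n-\lambda_m|}{a_n-a_m}\ge\frac{a_n}{a_n-a_m}>1,
\]
because $\text{Re}\,\lambda_m\le 0<a_m<a_n$. Hence $|\pi_n(\lambda,a;N)|\le|\pi_n(\lambda,a)|$; this is the analogue of the estimate $|r_n(N)|\le 1$ used in the proof of Lemma~\ref{lem: convergence of Q(N)}. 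It gives $|r_n^{(N)}|\le|k_nb_n|$. By H\"older, $\sum_n|k_nb_n|<\infty$ since $k\in X^*$ and $b\in X$, and $|a_n-z|\ge\operatorname{dist}(z,\{a_m\}\cup\{0\})>0$. So $F(z)=\lim_N F_N(z)$ pointwise off $\{a_n\}\cup\{0\}$, with no analyticity needed. In particular $F(\lambda_j)=0$ for every $\lambda_j\neq 0$, because $F_N(\lambda_j)=0$ for $N\ge j$.

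\textbf{Two further points need fixing.} First, Step~3 assumes $\lim_N F_N(z)\neq 0$ for $z\notin\{\lambda_n\}\cup\{a_n\}\cup\{0\}$, but an infinite product can degenerate to $0$. Nonvanishing holds once $\sum_n(a_n-\lambda_n)$ converges and $\sum_n|a_n-\lambda_n|^2<\infty$. Both follow from the finiteness of $\pi_1(\lambda,a)$. Its tail factors are $1+w_m$ with $w_m=(a_m-\lambda_m)/(a_1-a_m)$, and $|1+w_m|^2=1+2\,\text{Re}\,w_m+|w_m|^2$ with $\text{Re}\,w_m\ge 0$. Hence $\sum_m(\text{Re}\,w_m+|w_m|^2)<\infty$. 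Convergence of the nonzero product then forces $\sum_m w_m$ to converge, and therefore $\sum_m(a_m-\lambda_m)$ converges, since $w_m-(a_m-\lambda_m)/a_1$ is absolutely summable. For real negative $\lambda$ this reduces to $\sum_m(a_m+|\lambda_m|)<\infty$.

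Second, your analysis at $z=a_m$ is off. The $m$th coordinate of $T_kx=a_mx$ reads $b_m(kx)=0$, so $kx=0$ is forced and your ``$kx\neq0$'' branch is vacuous. Thus $a_m$ is an eigenvalue if and only if $k_m=0$. Since $\text{Re}\,\lambda_j\le 0<a_m$, no $\lambda_j$ ever equals an $a_m$, so the cancellation case in Step~3 never arises. What you actually need is $\pi_m(\lambda,a)\neq 0$, which follows from the same observation: the finitely many factors with $j<m$ are nonzero, and every factor with $j>m$ has modulus greater than $1$.
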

\vspace*{-\parskip}
\noindent Note that $T_{k(\lambda)}$ is compact, so $0$ is necessarily an accumulation point of $(\lambda_n)$, and it is the only element in the essential spectrum of $T_{k(\lambda)}$. %\Sigma_{\text{disc}}(T_{k(\lambda)})=\{\lambda_n \mid n \in \N \}$ and $\Sigma_{\text{ess}}(T_{k(\lambda)})=\{0\}$. 

We now have the machinery to state the following sufficient condition for feedback stabilization, which is adapted from Theorems 4 and 5 of~\cite{chen2025poleplacement}.

% Option 1
\begin{theorem}
    Suppose that $(|b_n|)$ and $(a_n/|b_n|)$ uniformly exponentially decay, both with $\alpha=1$; then, $k(-a) \in X^*$ and $T_{k(-a)}$ is diagonalizable. Moreover, the closed-loop system $$\dot x(t) = T_{k(-a)} x(t)$$ is stable for all $X$ and asymptotically stable for all $X$ except $X=\ell^\infty$ and $X=c$.
    \label{Chen thm: 5}
\end{theorem}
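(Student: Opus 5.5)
With $\lambda = -a$, the feedback $k(-a)$ places closed-loop poles at $-a_n$. The plan is to write the eigenvectors of $T_{k(-a)}$ explicitly, show that the matrix $P$ formed by them and its candidate inverse $Q$ are bounded on every $X$, and read stability off $e^{T_k t} = P e^{-\Lambda t}Q$ with $\Lambda = \diag(a_n)$.

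\emph{Step 1: bound $k(-a)$.} By Definition~\ref{defn: exponential decay} with $\alpha=1$, we have $a_m/a_n \le |b_m|/|b_n| \cdot r_2^{m-n}$ for $m \ge n$, and $(a_n)$ itself decays geometrically, with $a_m/a_n \le r^{m-n}$ for $r = r_1 r_2 < 1$. The factors of $\pi_n(-a,a)$ are
\[
\frac{1+a_m/a_n}{1-a_m/a_n}.
\]
For $m>n$ I will use $a_m/a_n \le r^{m-n}$. For $m<n$ I will use $a_n/a_m \le r^{n-m}$, since then the factor equals $(a_m/a_n+1)/(a_m/a_n-1)$ in absolute value. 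In both cases the factor is $1 + O(r^{|m-n|})$ uniformly, so $|\pi_n(-a,a)| \le \prod_{j\ge1}\bigl((1+r^j)/(1-r^j)\bigr)^2 =: K < \infty$ uniformly in $n$. Hence $|k_n| \le 2K a_n/|b_n| \le 2K (a_1/|b_1|)\, r_2^{n-1}$. This sequence is in $\ell^1$, so $k(-a)$ lies in $X^*$ for every $X$ considered.

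\emph{Step 2: eigenvectors.} For $\mu = -a_j$, solving $(A + bk - \mu)v = 0$ gives $v_n = b_n/(\mu - a_n)$ up to scaling. Theorem~\ref{Chen thm: lambda is the spectrum of T_k} (or the identity $1 = k(A-\mu)^{-1}b$) confirms that these are eigenvectors. Normalize them as $p_j := (a_j\, b_n/(b_j(a_n + a_j)))_n$. Then $|p_{j,n}| \le (|b_n|/|b_j|)\min(1, a_j/a_n)$. For $n \ge j$ this is at most $r_1^{n-j}$. For $n<j$ it is at most $(a_j/|b_j|)/(a_n/|b_n|) \le r_2^{j-n}$. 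So $P = [p_1\, p_2\, \cdots]$ is a matrix whose entries are bounded by $\rho^{|n-j|}$ with $\rho = \max(r_1,r_2)$. By Schur's test (row and column sums uniformly bounded), $P$ is bounded on every $\ell^p$, on $c_0$, and on $c$; on $c$ one also checks that limits are preserved, using that column entries tend to zero.

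\emph{Step 3: the inverse $Q$.} The left eigenvectors of $T_k$ are the rows $q_j$ with $q_j(A - \mu_j) + (q_j b) k = 0$, i.e. $q_{j,n} \propto k_n/(a_n + a_j)$. Because the eigenvalues are distinct, $q_j p_i = 0$ for $i \ne j$; scaling so that $q_j p_j = 1$ then gives $QP = I$. I expect this step to be the main obstacle. It requires computing the normalizing constant $q_j p_j$ explicitly, and this is where $\pi$ enters: the constant is a derivative of the characteristic function $1 - k(A-z)^{-1}b$ at $-a_j$, which equals a product of the same type as $\pi$. One then has to show that $|Q_{jn}| \lesssim \rho^{|n-j|}$ uniformly, using the same two-sided product estimate as in Step 1, and to verify $PQ = I$ (not only $QP = I$). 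For the latter I would use completeness: $Q$ is injective, because a vector annihilated by all $q_j$ is a vector whose associated function vanishes at every $-a_j$, and the product representation then forces it to be zero.

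\emph{Step 4: stability.} Given $T_k = P(-\Lambda)Q$, we get $\|e^{T_k t}\| \le \|P\|\|Q\|$, since $e^{-\Lambda t}$ is a contraction; this gives stability on all $X$. On $\ell^p$ with $p<\infty$ and on $c_0$, the diagonal semigroup $e^{-\Lambda t}$ converges strongly to $0$ (dominated convergence, using $a_n > 0$), and conjugating by $P$ and $Q$ gives asymptotic stability. On $\ell^\infty$ and $c$, the vector $Q^{-1}\mathbf{1} = P\mathbf{1}$ gives an initial condition that does not decay, because the convergence $e^{-a_n t}\to 0$ is not uniform in $n$.
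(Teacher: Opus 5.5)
\paragraph{Step 3 has a real gap.} Your Steps 1, 2 and 4 are the paper's argument specialized to $\lambda=-a$. Condition \ref{cond: dynamic ratio bounds} holds with, e.g., $\omega_n=1+2^{-n}$ and $\gamma_n=1/\omega_n$, so Theorem~\ref{thm: suff cond} and Corollary~\ref{cor: closed-loop stability} cover this case. Your two-sided product estimate is Proposition~\ref{Prop: pi(lambda, a) is uniformly bounded}. The bound $|p_{j,n}|\le\rho^{|n-j|}$ together with Schur's test is Propositions~\ref{prop: kappa and eta bounds}--\ref{prop: P and Q are bounded}, written in $X$- rather than $Y$-coordinates.

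Step~3, which you leave as a plan, is the heart of the proof. Biorthogonality and normalizing by $F'(-a_j)$ are fine once you import the identity $F(z)=\prod_m(1+a_m/z)/(1-a_m/z)$ for $F(z):=1-k(z-A)^{-1}b$. Mind the sign: the eigen-condition is $k(\mu-A)^{-1}b=1$, not $k(A-\mu)^{-1}b=1$.

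Your completeness argument for $PQ=I$, however, does not go through as stated. The function $G_x(z)=\sum_n k_nx_n/(z-a_n)$ is not analytic at $0$, which is the accumulation point of both its poles $a_n$ and its prescribed zeros $-a_j$. So vanishing at every $-a_j$ forces nothing by itself, and $G_x/F$ is holomorphic only on $\mathbb{C}\setminus\{0\}$. To finish, you would need uniform estimates on the circles $|z|=\sqrt{a_na_{n+1}}$: a lower bound on $|F|$, and $|G_x(z)|\lesssim\|x\|_X/|z|$, which uses both decay hypotheses. These would show that $zG_x/F$ is entire and bounded. Even then you only get $G_x=(kx)F(z)/z$, i.e.\ $x_n=-(kx)b_n/a_n$. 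That is the formal eigenvector $b_n/(\mu-a_n)$ at $\mu=0\in\Sigma(T_{k(-a)})$, and it is excluded only because $(b_n/a_n)$ grows exponentially and so is not in $X$. None of this is in your sketch.

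The paper avoids analytic function theory altogether. With the scaling $\sigma_n=\pi_n(a,\lambda)$, $\tau_n=1$, the $N\times N$ truncations of $\tilde P$ and $\tilde Q$ are diagonal rescalings of the Cauchy matrix $G=[1/(a_i-\lambda_j)]$ and of its explicit inverse~\eqref{eqn: Cauchy inverse}. Biorthogonality with the correct normalization is then a finite identity, and one passes to the limit $N\to\infty$ (Proposition~\ref{prop: P and Q are inverses}). Your normalized $Q$ is, up to a diagonal rescaling, the same Cauchy-type matrix conjugated by $\diag(b_n)$. Applying $GG^{-1}=I_N$ with the same truncation-and-limit argument therefore gives $PQ=I$ directly. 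Dominated convergence suffices there, since $|\pi_m(\cdot\,;N)|\le|\pi_m(\cdot)|$.

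\paragraph{A second, smaller error concerns $X=c$.} Column entries tending to zero only give $P(c_0)\subseteq c_0$. Mapping $c$ into $c$ also requires the row sums $\sum_j p_{j,n}$ to converge as $n\to\infty$, and this fails in general. For instance, take $a_n=r^n$ and $b_n=\epsilon_ns^n$ with $0<r<s<1$ and $\epsilon_n\in\{\pm1\}$. Then $p_{j,n}=\epsilon_n\epsilon_js^{n-j}/(1+r^{n-j})$. If $(\epsilon_n)$ is constant on some long blocks and alternating on others, the row sums oscillate between approximately $\sum_{k\in\mathbb{Z}}s^k/(1+r^k)$ and $\sum_{k\in\mathbb{Z}}(-1)^ks^k/(1+r^k)$.

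This is why Theorem~\ref{thm: suff cond} asserts diagonalizability only for $\ell^p$ and $c_0$. Stability on $c$ is obtained instead by observing that $T_{k(-a)}$ leaves $c$ invariant and using the $\ell^\infty$ bound. For the same reason, your non-decaying initial state $P\mathbf{1}$ need not lie in $c$. On $c$, use $x(0)=\mathbf{1}$ instead. Then $Q\mathbf{1}\notin c_0$, since otherwise $\mathbf{1}=PQ\mathbf{1}\in P(c_0)\subseteq c_0$. Hence $\|Qx(t)\|_{\ell^\infty}=\sup_je^{-a_jt}|(Q\mathbf{1})_j|\ge\limsup_j|(Q\mathbf{1})_j|>0$ for every $t$.
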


%%%%%%%%%%%%%%%% MAIN RESULTS %%%%%%%%%%%%%%%%

\subsection{Main results} \label{sec: main results}
Given the findings of \cite{chen2025poleplacement}, we pose two motivating questions for our work. In the previous subsection, we saw that pole placement requires polynomial decay of $(a_n)$ and for $(a_n/|b_n|)$ not to blow up exponentially, while on the other hand, diagonalizability of $T_{k(\lambda)}$ follows if $(|b_n|)$ and $(a_n/|b_n|)$ uniformly exponentially decay. This disparity between decay rates motivates the first question we address in this paper:

\begin{enumerate}[(Q1)]
    \item Is the uniform exponential decay of $(|b_n|)$ and $(a_n/|b_n|)$ a \textit{necessary condition} for the diagonalizability of $T_k$?
\end{enumerate}
Through the following theorem, we show that the answer to (Q1) is affirmative for $(|b_n|)$ unconditionally and affirmative for $(a_n/|b_n|)$ provided the additional assumption that $(a_n)$ does not decay faster than exponentially.

%%%%%%%% MAIN RESULT #1 %%%%%%%%%%%%%%%
\begin{theorem}
    Suppose that there exists a negative real sequence $\lambda = (\lambda_n)$ such that $k(\lambda)\in X^*$ and $T_{k(\lambda)}$ is diagonalizable; then, the following hold:
    \begin{enumerate}[(C\arabic*), start=1, leftmargin=3.25em]        
        \item $(a_n)$ and $(|\lambda_n|)$ uniformly exponentially decay with $\alpha=1$.
        \label{cond: a_n and lambda_n exponential decay}
        
        \item $(|b_n|)$ uniformly exponentially decays.
        \label{cond: b_n exponential decay}
        
        \item $(g_n)$ uniformly exponentially decays, where $(g_n)$ is given by
        \begin{align}
            g_n := \frac{a_n -\lambda_n}{|b_n|}, \quad \text{for }n\in \N.
            \label{defn: g_n}
        \end{align}
        \label{cond: g_n exponential decay}
    \end{enumerate}
    Moreover, if we additionally assume that $a_{n+1}/a_n$ is uniformly bounded from below by some $r' \in (0, 1)$, then the following hold as well:
    \begin{enumerate}[(C\arabic*), start=4, leftmargin=3.25em]
        \item $(a_n/|b_n|)$ uniformly exponentially decays.
        \label{cond: a_n/b_n exponential decay}
        
        \item There exist constants $c_1, c_2 > 0$ such that $c_1 \leq |\lambda_n|/a_n \leq c_2$ for all $n\in \N$.
        \label{cond: static ratio bounds}
    \end{enumerate}
    \label{thm: nec cond}
\end{theorem}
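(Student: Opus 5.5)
The plan is to work with the rank-one spectral structure of $T_k$, $k=k(\lambda)$, and turn the boundedness of the diagonalizing pair $P,Q$ into uniform bounds on explicit matrix entries. By Theorem~\ref{Chen thm: lambda is the spectrum of T_k}, each $\lambda_m$ is a simple eigenvalue of $T_k$.

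\emph{Step 1 (eigenvectors and projections).} A right eigenvector is $v_m=(A-\lambda_m)^{-1}b$, with entries $b_n/(a_n-\lambda_m)$. A left eigenvector is $w_m=k(A-\lambda_m)^{-1}$, with entries $k_n/(a_n-\lambda_m)$. Both are well defined because $a_n>0>\lambda_m$.

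\emph{Step 2 (the pairing).} I would compute $w_mv_m=\sum_n k_nb_n/(a_n-\lambda_m)^2$ in closed form. The characteristic function $\phi(s)=1-k(A-s)^{-1}b$ should equal $\prod_j(1-\lambda_j/s)/(1-a_j/s)$ up to normalization; this identity is the content behind~\eqref{defn: feedback k} and Theorem~\ref{Chen thm: lambda is the spectrum of T_k}. Then $w_mv_m=\phi'(\lambda_m)$, up to sign, can be written as a product of the same type as $\pi_n$.

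\emph{Step 3 (uniform entry bound).} If $T_k=P\Lambda Q$, the spectral projection onto $\lambda_m$ is $E_m=P e_m e_m^{*}Q$. Hence $\|E_m\|_{\mathcal B(X)}\le\|P\|\|Q\|=:M$ for every $m$. Since $E_m=v_mw_m/(w_mv_m)$, testing on unit vectors $e_j$ and coordinate functionals gives the key inequality
\begin{align*}
\frac{|b_n|\,|k_j|}{|a_n-\lambda_m|\,|a_j-\lambda_m|\,|w_mv_m|}\le M\quad\text{for all } n,j,m\in\N.
\end{align*}
This testing is legitimate in every space $X$ listed, since coordinate functionals have norm one.

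\emph{Step 4 (substituting the product formulas).} Next I substitute $k_j=-(a_j-\lambda_j)\pi_j(\lambda,a)/b_j$ and the product expression for $w_mv_m$ into the key inequality. After cancellation, the $(n,j,m)$ entries become ratios of finite and infinite products of factors $(a_i-\lambda_m)/(a_i-a_m)$ and $(\lambda_i-\lambda_m)/(a_i-\lambda_m)$. I would then specialize the indices.

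First take $j=m$ and let $n$ vary. This bounds $|b_n|/|b_m|$ by a product over the indices between $n$ and $m$, and bounds $(a_n-\lambda_n)/|b_n|$ similarly; these are the sources of~\ref{cond: b_n exponential decay} and~\ref{cond: g_n exponential decay}.

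Next take $n=j$ and compare with $m$. This forces factors such as $|a_i-\lambda_m|/|a_i-a_m|$ to stay bounded along the product. That should force interlacing together with a gap ratio bounded away from $1$, i.e.\ $a_{n+1}/a_n\le r$ and $|\lambda_{n+1}|/|\lambda_n|\le r$. For this I first reorder so that $a_n$ and $|\lambda_n|$ are decreasing, using Theorem~\ref{Chen thm: distinct a_n's}. Chaining these ratios gives~\ref{cond: a_n and lambda_n exponential decay} with $\alpha=1$. Once geometric separation of the $a$'s and $\lambda$'s is in hand, the infinite products in Step 2 are bounded above and below by constants. The inequalities of the previous paragraph then become genuine geometric bounds, yielding~\ref{cond: b_n exponential decay} and~\ref{cond: g_n exponential decay}.

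\emph{Step 5 (under the lower bound $a_{n+1}/a_n\ge r'$).} This assumption together with~\ref{cond: a_n and lambda_n exponential decay} makes $(a_n)$ comparable to a geometric sequence from both sides. I would then show $|\lambda_n|/a_n$ is bounded above and below, giving~\ref{cond: static ratio bounds}. If the ratio were unbounded or tended to zero along a subsequence, some factor $(a_i-\lambda_m)/(a_i-a_m)$ or $(\lambda_i-\lambda_m)/(a_i-\lambda_m)$ would blow up, contradicting the uniform bound $M$. Given~\ref{cond: static ratio bounds}, we have $g_n\asymp a_n/|b_n|$, so~\ref{cond: a_n/b_n exponential decay} follows from~\ref{cond: g_n exponential decay}.

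\emph{Main obstacle.} The hardest step is Step 4: extracting the separation $a_{n+1}/a_n\le r<1$ from boundedness of the $E_m$ alone. The infinite products in $\pi_j$ and $\phi'(\lambda_m)$ must be controlled before separation is known, so the argument is circular if done naively. I would attack it by contradiction. Suppose $a_{n+1}/a_n\to1$ along a subsequence. Then the single factor $1/|1-a_{n+1}/a_n|$ appears in $|E_n|_{n+1,n+1}$ or $|E_n|_{n,n+1}$. I would try to bound the remaining factors crudely, using only monotonicity of the sequences and $\lambda_i<0<a_i$ (so every factor $(a_i-\lambda_m)/(a_i-a_m)$ has a favorable sign). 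The aim is to show that this factor forces the entry to diverge, contradicting the bound $M$.
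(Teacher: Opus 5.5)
Your Steps 1--3 are sound. The diagonal entry of your key inequality, $(E_m)_{mm}=\pi_m(\lambda,a)\pi_m(a,\lambda)$, is exactly the quantity the paper bounds (the paper reaches it through $\sigma_n\tau_n=\pi_n(a,\lambda)$).

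From it, (C1) and (C5) follow essentially as you sketch, provided you group each factor of $\pi_m(\lambda,a)$ with the matching factor of $\pi_m(a,\lambda)$. Every grouped factor has modulus at least $1$; a factor of $\pi_m(\lambda,a)$ alone with index $i<m$ can be smaller than $1$. After grouping, the single factor $1/(1-a_{m+1}/a_m)$ forces $\sup_m a_{m+1}/a_m<1$, and likewise for $|\lambda_m|$. Then (C4) follows from (C3) and (C5), as you say. So separation is not the real obstacle.

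The real gap is (C2) and (C3): they cannot be extracted from entrywise bounds at all. In the column $j=m$, the index $n$ enters only through $b_n/(a_n-\lambda_m)$:
\[
|(E_m)_{nm}|=\frac{|b_n|}{|b_m|}\cdot\frac{a_m+|\lambda_m|}{a_n+|\lambda_m|}\,\bigl|\pi_m(\lambda,a)\pi_m(a,\lambda)\bigr|.
\]
So there is no product over the indices between $n$ and $m$. Your inequality gives only $|b_n|\le M|b_m|$ for $n>m$ and $g_m\le Mg_n$ for $m>n$. That is quasi-monotonicity, not decay.

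No other choice of $(n,j,m)$ helps. Take $a_n=\rho^n$, $\lambda_n=-\rho^n$, $b_n=n^{-2}$. Then all the $\pi$'s are bounded, $k(\lambda)\in\ell^1$, and every entry of every $E_m$ is bounded uniformly in $m,n,j$. Yet $(b_n)$ is not exponentially decaying, while $\|E_m\|_{\mathcal B(\ell^\infty)}$ grows like $m$. The choice $b_n=n^2\rho^n$ does the same for $(g_n)$.

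Two things are missing.

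First, you need a bound that uses summation, i.e.\ the operator norm rather than single entries. Test $P$ on vectors with many nonzero coordinates (unimodular entries on $\{1,\dots,2i\}$ in $\ell^\infty$, $c$, $c_0$), or take $\ell^p$-norms of whole columns. This requires the column normalization to be bounded below, $|P_{jj}|\ge 1/\|Q\|$. You get that from $P_{jj}Q_{jj}=(E_j)_{jj}$ and $|(E_j)_{jj}|\ge 1$. The result is $\sum_{j<i}|b_i|/|b_j|\le C$ and $\sum_{j=i}^{2i}g_j/g_i\le C$, or their $p$-th power analogues.

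Second, you need a lemma turning such sums into decay. A positive sequence with uniformly bounded ratio sums of this kind must drop by a factor $1/2$ within a bounded number of steps, and hence decays uniformly exponentially.

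These two steps are the paper's Propositions on bounded sums and on exponential decay. Without them, the ``genuine geometric bounds'' you anticipate in Step 4 are not available.
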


\noindent We also note that under the pole assignment $\lambda_n = -a_n$ for $n\in \N$ (as considered in Theorem \ref{Chen thm: 5}), the uniform exponential decay of $(a_n/|b_n|)$ immediately follows from \ref{cond: g_n exponential decay}. Having answered (Q1), we now pose a natural converse question regarding the poles:

\begin{enumerate}[(Q2)]
    \item Given $(|b_n|)$ and $(a_n/|b_n|)$ uniformly exponentially decay, which conditions must the poles $(\lambda_n)$ satisfy for pole placement and feedback diagonalizability to be feasible?
\end{enumerate}
The following theorem provides a sufficient condition for $(\lambda_n)$ to satisfy, which we note includes a stronger version of \ref{cond: static ratio bounds}.

\begin{theorem}
    Suppose that $\lambda=(\lambda_n)$ is a negative real sequence, $b_n \neq 0$ for $n\in \N$, and $(a_n)$, $(b_n)$, and $(\lambda_n)$ satisfy conditions \ref{cond: a_n and lambda_n exponential decay}-\ref{cond: a_n/b_n exponential decay} and the following:
    \begin{enumerate}[(C\arabic**), start=5, leftmargin=3.5em]
        \item there exist sequences $ (\gamma_n)$, $(\omega_n)$ such that $\gamma_n \leq |\lambda_n|/a_n \leq \omega_n$ for all $n\in \N$ and, moreover, $(\omega_n-1)$ and $(1/\gamma_n - 1)$ are positive sequences in $\ell^1$;
        \label{cond: dynamic ratio bounds}
    \end{enumerate}
    then, $k(\lambda)\in X^*$ for all $X$ and $T_{k(\lambda)}$ is diagonalizable if $X=\ell^p$ for $1 \leq p \leq \infty$ or $X = c_0$.
    \label{thm: suff cond}
\end{theorem}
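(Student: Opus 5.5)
The plan is to bound the products $\pi_n(\lambda,a)$ uniformly, and then write the eigenvectors of $T_{k(\lambda)}$ explicitly. From the eigen-equation $(A+bk)v=\mu v$ we get $v_m = b_m (kv)/(\mu - a_m)$. So for each pole $\lambda_j$ we take the eigenvector $v^{(j)}$ with entries
\begin{align*}
v^{(j)}_m = \frac{b_m}{\lambda_j - a_m},
\end{align*}
possibly rescaled by a factor $s_j$. We then show that the matrix $P=[v^{(1)} s_1, v^{(2)} s_2,\ldots]$ and its inverse $Q$ are bounded on $X$. The inverse $Q$ has rows given by the left eigenvectors. For a rank-one perturbation of a diagonal operator these are $w^{(j)}_n = k_n/(\lambda_j - a_n)$, normalized so that $w^{(j)}v^{(j)}=1$. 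Since both $a_n>0$ and $\lambda_j<0$ are real, every denominator satisfies $|\lambda_j - a_m| = |\lambda_j| + a_m$ and never vanishes, which makes the entries easy to control.

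\textbf{Step 1: $k(\lambda)\in X^*$.} Write
\[
\pi_n(\lambda,a)=\prod_{m\neq n}\frac{a_n-\lambda_m}{a_n-a_m}.
\]
Split the product at $m=n$.
\begin{itemize}
\item For $m>n$, \ref{cond: a_n and lambda_n exponential decay} gives $a_m/a_n\le r^{m-n}$ and $|\lambda_m|/a_n\le \omega_m r^{m-n}$ (using \ref{cond: dynamic ratio bounds}). Hence the factors are $1+O(r^{m-n})$ and their product is uniformly bounded.
\item For $m<n$, the factor equals $(a_n+|\lambda_m|)/(a_m-a_n)$. Dividing through by $a_m$, this is
\[
\frac{|\lambda_m|}{a_m}\cdot\frac{1+a_n/|\lambda_m|}{1-a_n/a_m}.
\]
The product of the ratios $|\lambda_m|/a_m$ is bounded above and below because $\prod\omega_m$ and $\prod 1/\gamma_m$ converge; this is exactly where the $\ell^1$ hypotheses in \ref{cond: dynamic ratio bounds} enter. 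The remaining fractions are $1+O(r^{n-m})$.
\end{itemize}
So $|\pi_n|$ is bounded above and below uniformly in $n$. Then $|k_n| \lesssim (a_n+|\lambda_n|)/|b_n| \lesssim a_n/|b_n|$, which decays exponentially by \ref{cond: a_n/b_n exponential decay}. Therefore $k\in\ell^1\subseteq X^*$ for every $X$.

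\textbf{Step 2: diagonalization.} Choose scalings so that $P_{mj}= s_j b_m/(\lambda_j-a_m)$ and $Q_{jn}= t_j k_n/(\lambda_j-a_n)$ with $QP=I$. Biorthogonality $w^{(i)}v^{(j)}=0$ for $i\neq j$ follows from the characteristic equation
\[
1-\sum_n \frac{k_nb_n}{\mu-a_n}=0,
\]
which vanishes at each $\lambda_j$ by Theorem~\ref{Chen thm: lambda is the spectrum of T_k} (or by a partial-fraction identity for the products $\pi$). Completeness, i.e.\ $PQ=I$, we would obtain by showing that $Q$ is injective, or via the same partial-fraction identity applied to $\sum_j P_{mj}Q_{jn}$.

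\textbf{Step 3: boundedness.} Choose the scaling $s_j\sim 1/|b_j|$, so that after conjugating by $D=\diag(b_n)$ the matrices become Cauchy-like:
\begin{align*}
D^{-1}PD' : \quad \frac{b_m}{b_j}\cdot\frac{a_j}{|\lambda_j|+a_m}.
\end{align*}
For $m\ge j$ this entry is bounded by $|b_m|/|b_j|$, which decays like $r^{m-j}$ by \ref{cond: b_n exponential decay}. For $m<j$ we have $|b_m/b_j|$ large, but the fraction is $\lesssim a_j/a_m\le r^{j-m}$. What we actually need is the product $(|b_m|/a_m)(a_j/|b_j|)$, which decays by \ref{cond: a_n/b_n exponential decay}, exponentially in $j-m$. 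This suggests rescaling by $a_j/b_j$ instead, and balancing the choice so that both triangles are controlled. Once $|P_{mj}|\le C\rho^{|m-j|}$, a Schur test (bounding row and column $\ell^1$ sums) gives boundedness on $\ell^1$ and $\ell^\infty$, hence on every $\ell^p$ by interpolation, and on $c_0$ because the columns lie in $c_0$. The same argument applies to $Q$, using the bounds on $k_n$ and $\pi_n$ from Step 1. This also explains why $c$ is excluded: columns need not preserve limits.

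The main obstacle is Step 3. We must pick the normalizations $s_j,t_j$ consistently so that $QP=I$ and both $P$ and $Q$ have exponentially off-diagonal decay simultaneously, using only \ref{cond: b_n exponential decay}, \ref{cond: g_n exponential decay} and \ref{cond: a_n/b_n exponential decay}. The two-sided bounds on $\pi_n$ from \ref{cond: dynamic ratio bounds} are what make the normalization constants comparable to $1$ up to uniform factors, so that step is where we would concentrate the effort.
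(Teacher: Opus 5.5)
Step 1 of your proposal is sound and is essentially the paper's bound on $\pi(\lambda,a)$. Your two triangle estimates in Step 3 (via $|b_m|/|b_j|$ below the diagonal and $(a_j/|b_j|)/(a_m/|b_m|)$ above it) are the row/column-sum bounds behind the paper's $\kappa,\eta$ estimates. The partial-fraction proof of $w^{(i)}v^{(j)}=0$ for $i\neq j$ is a valid alternative route.

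\textbf{Gap 1: the normalization.} The gap is the step you flag as the main obstacle, and what is missing there is an identity, not just more effort. The product $s_jt_j$ is forced to be $1/(w^{(j)}v^{(j)})$, where
\[
w^{(j)}v^{(j)}=\sum_{n}\frac{k_nb_n}{(\lambda_j-a_n)^2}=\sum_{n}\frac{(\lambda_n-a_n)\pi_n(\lambda,a)}{(\lambda_j-a_n)^2}.
\]
Since $\pi_n(\lambda,a)$ has sign $(-1)^{n-1}$, this series alternates. Crude estimates therefore show neither that it is nonzero nor give a lower bound. So you cannot verify that any admissible choice of $(s_j,t_j)$ makes $P$ and $Q$ bounded at the same time, and two-sided bounds on $\pi(\lambda,a)$ do not help.

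What is needed is the exact value $w^{(j)}v^{(j)}=1/\bigl((\lambda_j-a_j)\pi_j(a,\lambda)\bigr)$. The paper obtains this from the explicit inverse of the finite Cauchy matrix $[1/(a_i-\lambda_j)]$, applied to $N\times N$ truncations built with $\pi(\cdot\,;N)$, followed by a strong-convergence argument as $N\to\infty$. This pins down $\sigma_j=\pi_j(a,\lambda)$ and $\tau_j=1$; in your notation, $t_j=b_j$ and $s_j=-(a_j-\lambda_j)\pi_j(a,\lambda)/b_j$. With this normalization:
\begin{itemize}
\item Boundedness of $P$ needs $\pi(a,\lambda)\in\ell^\infty$. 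This is a different product from the one you bounded; the symmetric argument for it uses $(1/\gamma_n-1)\in\ell^1$.
\item Boundedness of $Q$ needs $\pi(\lambda,a)\in\ell^\infty$ together with the exponential decay of $(|\lambda_n|/|b_n|)$, which follows from \ref{cond: a_n/b_n exponential decay} and \ref{cond: dynamic ratio bounds}.
\end{itemize}

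\textbf{Gap 2: completeness.} You have not established $PQ=I$. For $m\neq n$, partial fractions reduce $\sum_j P_{mj}Q_{jn}$ not to the characteristic equation you used, but to the dual identity
\[
\sum_{j}\frac{(a_j-\lambda_j)\pi_j(a,\lambda)}{a_m-\lambda_j}=1\quad\text{for all } m.
\]
This is a sum over the eigenvalue index in which $a$ and $\lambda$ have swapped roles. It does not follow from the spectral statement in Theorem~\ref{Chen thm: lambda is the spectrum of T_k} and needs its own proof, most naturally again through truncated Cauchy matrices. Your alternative, injectivity of $Q$, is a completeness statement for the left eigenvectors. That is not automatic for rank-one perturbations in infinite dimensions, and the proposal leaves it unproved.
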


We conclude this section by connecting Theorem \ref{thm: suff cond} back to feedback stabilization through the following corollary, the proof of which follows the same arguments presented in the last section of the proof of \cite[Theorem 4]{chen2025poleplacement} (beginning at the bottom of page 26). Note that for the case where $X=c$, we embed $c$ into $\ell^\infty$ and apply the same arguments as in the $X=\ell^\infty$ case.

\begin{corollary}
    Suppose that the hypothesis of Theorem \ref{thm: suff cond} is satisfied; then, the closed-loop system 
    \begin{align*}
        \dot x(t) = T_{k(\lambda)} x(t)
        \label{eqn: feedback system in suff cond cor}
    \end{align*}
    is stable for all $X$ and asymptotically stable for all $X$ except $X=\ell^\infty$ and $X=c$.
    % satisfies the following:
    % \begin{enumerate}
    %     \item If $X=c_0$ or if $X = \ell^p$ for $1 \leq p < \infty$, then system \eqref{eqn: feedback system in suff cond cor} is asymptotically stable.

    %     \item If $X = \ell^\infty$ or if $X = c$, then system \eqref{eqn: feedback system in suff cond cor} is stable, but not asymptotically stable.
    % \end{enumerate}
    \label{cor: closed-loop stability}
\end{corollary}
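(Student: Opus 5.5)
The plan is to follow the structure of the argument in \cite[Theorem 4]{chen2025poleplacement}: use the diagonalization supplied by Theorem \ref{thm: suff cond} to reduce the semigroup $e^{T_{k(\lambda)}t}$ to a diagonal semigroup. By Theorem \ref{thm: suff cond}, $k(\lambda)\in X^*$ and, for $X=\ell^p$ ($1\le p\le\infty$) or $X=c_0$, there are bounded mutually inverse operators $P,Q$ with $T_{k(\lambda)}=P\Lambda Q$. Here $\Lambda=\diag(\lambda_1,\lambda_2,\ldots)$, and by Theorem \ref{Chen thm: lambda is the spectrum of T_k} the $\lambda_n$ are exactly the nonzero poles. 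Since $T_{k(\lambda)}$ is bounded, the exponential series gives $e^{T_{k(\lambda)}t}=Pe^{\Lambda t}Q$. Because each $\lambda_n<0$, we have $\|e^{\Lambda t}\|_{\mathcal{B}(X)}=\sup_n e^{\lambda_n t}\le 1$ on every sequence space considered. Hence $\|x(t)\|_X\le \|P\|\,\|Q\|\,\|x(0)\|_X$, which proves stability for $\ell^p$ and $c_0$.

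For asymptotic stability when $X=\ell^p$ ($p<\infty$) or $X=c_0$, set $y=Qx(0)\in X$. Then $e^{\Lambda t}y=(e^{\lambda_n t}y_n)$. Given $\varepsilon>0$, choose $N$ so that the tail of $y$ beyond $N$ has norm less than $\varepsilon$; this is possible since finitely supported sequences are dense in these spaces. The tail of $e^{\Lambda t}y$ is dominated entrywise by the tail of $y$. The first $N$ entries tend to zero because $\lambda_n<0$. So $e^{\Lambda t}y\to0$, and applying the bounded operator $P$ gives $x(t)\to0$.

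For $X=\ell^\infty$, stability is covered by the bound above. The failure of asymptotic stability follows the cited argument. Since $\lambda_n\to0$ (by \ref{cond: a_n and lambda_n exponential decay}), for the vector $y=(1,1,\ldots)$ we get $\|e^{\Lambda t}y\|_{\ell^\infty}=\sup_n e^{\lambda_n t}=1$ for all $t$. Taking $x(0)=Py$ gives $\|x(t)\|\ge \|Q\|^{-1}\|e^{\Lambda t}y\|=\|Q\|^{-1}$, which does not tend to zero.

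For $X=c$, Theorem \ref{thm: suff cond} gives no diagonalization directly, so we embed $c\subset\ell^\infty$. Since $k(\lambda)$ is represented by an $\ell^1$ sequence via \eqref{eqn: k acting as a sum}, the operator $T_{k(\lambda)}$ on $c$ is the restriction of the same operator on $\ell^\infty$. Hence the trajectories coincide and the sup norms agree, so stability transfers. For non-asymptotic stability we need an initial condition in $c$ whose trajectory does not vanish. Taking $y=(1,1,\ldots)$ again, the entries of $e^{\Lambda t}y$ tend to $1$ as $n\to\infty$ for each fixed $t$, since $\lambda_n\to0$; thus $e^{\Lambda t}y\notin c_0$. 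So it suffices to show $x(0)=Py\in c$ and that $x(t)=Pe^{\Lambda t}y\notin c_0$, using that $P$ and $Q$ are built from the eigenvectors of $T_{k(\lambda)}$ as in the cited proof. I expect this $c$ case to be the main obstacle: the argument needs the structure of $P,Q$ (that $P$ maps constant-tail sequences to convergent ones, and that $Q=P^{-1}$ preserves $c_0$), not just their boundedness. I would try first to verify these properties from the explicit eigenvector form $v_n=\bigl(b_m/(\lambda_n-a_m)\bigr)_m$ used in \cite{chen2025poleplacement}.
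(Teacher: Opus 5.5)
Your arguments for stability on every $X$ (with $c$ handled through the embedding $c\subset\ell^\infty$), for asymptotic stability on $\ell^p$ ($p<\infty$) and $c_0$, and for the non-decaying trajectory on $\ell^\infty$ are correct and follow the paper's route. The gap is the part you leave open: that asymptotic stability \emph{fails} on $X=c$, which you reasonably read the ``except'' as asserting. Moreover, the route you sketch for it does not work. Starting from $x(0)=P\mathbf{1}$ requires $P\mathbf{1}\in c$, and this is false for the diagonalization that Theorem~\ref{thm: suff cond} provides. Take $b_n>0$. Every entry of the $j$th column of $P$ has the sign of $\pi_j(a,\lambda)$. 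Since $1-\lambda_m/\lambda_j<0$ exactly for $m<j$, we get $\sgn\pi_j(a,\lambda)=(-1)^{j-1}$. So $(P\mathbf{1})_i$ is an alternating sum whose weights concentrate near $j=i$. For $a_n=\rho^n$, $b_n=\nu^n$ ($\rho<\nu$) and $\lambda=-a$, these weights are asymptotically shift-invariant. Hence $(P\mathbf{1})_i$ is asymptotically $(-1)^{i-1}$ times a fixed constant, and this constant is nonzero, e.g.\ for $\rho=1/8$, $\nu=1/2$. Thus $P\mathbf{1}\notin c$.

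The missing idea is to start from $x(0)=\mathbf{1}\in c$ itself and use a conserved quantity instead of the eigenstructure. By (C1) and (C2), $a,b\in c_0$, so $T_{k(\lambda)}x=Ax+b\,(k(\lambda)x)\in c_0$ for every $x\in\ell^\infty$. Hence $e^{T_{k(\lambda)}t}-I=\sum_{m\ge 1}t^mT_{k(\lambda)}^m/m!$ maps $\ell^\infty$ into the closed subspace $c_0$. It follows that $\lim_n x_n(t)=\lim_n x_n(0)=1$, so $\|x(t)\|_{\ell^\infty}\ge 1$ for all $t\ge 0$. This also settles $\ell^\infty$ without using $P$ at all. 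If you prefer to stay with the diagonalization, note that $P$ maps $c_0$ into $c_0$ (Case 1 in the proof of Proposition~\ref{prop: P and Q are bounded}). Therefore $z:=Q\mathbf{1}\notin c_0$, since otherwise $\mathbf{1}=Pz\in c_0$. Then $\|x(t)\|_{\ell^\infty}\ge\|Q\|^{-1}\|e^{\Lambda t}z\|_{\ell^\infty}\ge\|Q\|^{-1}\limsup_n|z_n|>0$, because $e^{\lambda_n t}\to 1$ as $n\to\infty$.
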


% \begin{remark}
%     Note that while diagonalizability of $T_{k(\lambda)}$ does not necessarily hold if $X = c$, feedback stabilizability does hold since $c$ can be embedded into $\ell^\infty$, i.e., $c$ inherits the stabilizability property associated with $\ell^\infty$. 
% \end{remark}

The remainder of the paper is dedicated to the proofs of Theorems \ref{thm: nec cond} and \ref{thm: suff cond}, which are presented in Sections \S \ref{sec: proof of nec cond} and \S \ref{sec: suff thm proof}, respectively. 

% \begin{remark}
%     Recall the example system $\dot x = Ax$ with $A = \diag(-1, -1/2, \ldots )$ and $X = \ell^\infty$; this system is an example of {\color{red} item 2} in Corollary \ref{cor: closed-loop stability} since $A$ is diagonal and satisfies \eqref{cond: stable poles}, and yet, $\lim_{t\to \infty}\|x(t)\|_{\ell^\infty} \neq 0$.
% \end{remark}

%%%%%%%%%%%%%%%% PROOFS oF THEOREM 1 %%%%%%%%%%%%%%%%

\section{Proof of Theorem \ref{thm: nec cond}}\label{sec: proof of nec cond}
Suppose that there exists a negative real sequence $\lambda = (\lambda_n)$ such that $k(\lambda) \in X^*$ and $T_{k(\lambda)}$ is diagonalizable. By Theorem \ref{Chen thm: lambda is the spectrum of T_k}, it follows that $(\lambda_n) = \Sigma_{\text{disc}}(T_{k(\lambda)})$ and, as such, $T_{k(\lambda)}$ satisfies \eqref{cond: stable poles}. Hence, by Theorem \ref{Chen thm: distinct a_n's}, we have that $(a_n)\in c_0$ with pairwise distinct entries and that $(b_n)$ is nowhere zero. It can also be shown that $(\lambda_n) \in c_0$ using the same arguments that establish $(a_n)\in c_0$ (see \cite[\S 2.3.1]{chen2025poleplacement}). Since $(\lambda_n)$ is the discrete spectrum of a rank-one perturbation of a diagonal operator, it is known that the geometric multiplicity of $\lambda_n$ is one for all $n\in \N$ (see \cite[\S 3]{dobosevych2021spectra}). In a moment, we will also prove from direct computation that each $\lambda_n$ has geometric multiplicity equal to one. Then, since $T_{k(\lambda)}$ is assumed to be diagonalizable, the algebraic multiplicity of $\lambda_n$ must equal its geometric multiplicity for all $n\in \N$, which in turn implies that $(\lambda_n)$ has pairwise distinct entries. Subsequently, without loss of generality, we arrange $(a_n)$ and $(|\lambda_n|)$ in strictly monotonically decreasing order. Additionally, for ease of presentation, we assume $b_n > 0$; it will be seen that this assumption does not lose generality as $b_n$ can be replaced with $|b_n|$ in all subsequent proofs.

Due to the hypothesis that $T_{k(\lambda)}$ is diagonalizable, there exist $P,Q\in \mathcal{B}(X)$ such that $T_{k(\lambda)} = P\Lambda Q$, where $\Lambda = \diag(\lambda_1, \lambda_2, \ldots)$, and $PQ=QP=I$, where $I$ denotes the identity operator. For ease of computation, we will compute $P$ and $Q$ of a similarity transformation of $T_{k(\lambda)}$. Let $Y$ be the Banach space defined as
\begin{align*}
    Y := \{(y_n) \mid (b_ny_n) \in X \},
    % \label{defn: Y space}
\end{align*}
where for any $y = (y_n)\in Y$, we set
\begin{align*}
    \|y\|_Y := \|(b_ny_n)\|_X.
\end{align*}
We introduce the diagonal operators $B:Y \to X$, given by $(y_n) \mapsto (b_ny_n)$, and its inverse $B^{-1} : X \to Y$, given by $(x_n) \mapsto (x_n/b_n)$. Then, we define the operator $\tilde T: Y \to Y$ as
\begin{equation}
    \tilde T := B^{-1} T_{k(\lambda)} B.
    \label{defn: T tilde}
\end{equation}
For the rest of the paper, we will denote operators on $Y$ and elements of $Y$ (or $Y^*$) with a tilde, e.g., $\tilde T$. 

It is not hard to see that $\tilde T$ is diagonalizable. Let $\tilde P, \tilde Q : Y \to Y$  be defined respectively as $\tilde P:= B^{-1}PB$ and $\tilde Q := B^{-1}QB$. By the definition of $\|\cdot \|_Y$ and the fact that $P$ and $Q$ are bounded by hypothesis, it follows that $\tilde P$ and $\tilde Q$ are bounded. Given that $P$ and $Q$ are inverses of each other by hypothesis, it is easy to check that $\tilde P$ and $\tilde Q$ of are inverses of each other as well. Finally, we have
\begin{align*}
    \tilde T = B^{-1}T_{k(\lambda)}B = B^{-1}P\Lambda QB = B^{-1}PB\Lambda B^{-1}QB = \tilde P \Lambda \tilde Q,
\end{align*}
which establishes that $\tilde T$ is diagonalizable via $\tilde P$ and $\tilde Q$.

Let $\tilde k := (b_n k_n)\in Y^*$. Hence, we can rewrite $\tilde T$ as follows:
\begin{align*}
    \tilde T = A + \mathbf{1}\tilde k,
\end{align*}
where $\mathbf{1}$ denotes the vector/sequence of all ones. The left eigenvector $w_i^\top\in Y^*$ associated with eigenvalue $\lambda_i$ of $\tilde T$ is given by
\begin{align*}
    w_i^\top = ( w_{ij})_{j\in\N} = \left( \frac{(\lambda_j - a_j)\pi_j(\lambda, a)}{\lambda_i - a_j} \right)_{j\in \N},
\end{align*}
where $\pi_j(\lambda, a)$ is given by \eqref{defn: pi(lambda, a)}. Next, the right eigenvector $v_j \in Y$ associated with eigenvalue $\lambda_j$ of $\tilde T$ is given by
\begin{align*}
    v_j = ( v_{ij})_{i\in \N} = \left(\frac{a_j - \lambda_j}{a_i - \lambda_j} \right)_{i \in \N}.
\end{align*}
Because $\tilde T$ is diagonalizable, there exist sequences of scaling factors $(\sigma_n)$ and $(\tau_n)$ such that $\tilde P, \tilde Q:Y\to Y$, defined respectively as
\begin{align*}
    \tilde P : (y_n) \mapsto \left(\sum_{m=1}^\infty \frac{\sigma_m(a_m - \lambda_m) y_m}{a_n - \lambda_m} \right), \quad \tilde Q : (y_n) \mapsto \left( \sum_{m=1}^\infty \frac{\tau_n(\lambda_m - a_m)\pi_m(\lambda, a) y_m}{\lambda_n - a_m} \right),
\end{align*}
are bounded linear operators that diagonalize $\tilde T$. These two operators have the following infinite-dimensional matrix representations, respectively:
\begin{align}
\begin{split}
    \tilde P &= \left[\tilde P_{ij} := \frac{\sigma_j(a_j - \lambda_j)}{a_i - \lambda_j} \right]_{1 \leq i,j < \infty},\quad
    \tilde Q = \left[\tilde Q_{ij} := \frac{\tau_i(\lambda_j - a_j)\pi_j(\lambda, a)}{\lambda_i - a_j} \right]_{1 \leq i,j < \infty}.
    \label{defn: P and Q}
\end{split}
\end{align}
Note that $\tilde Q_i^\top = (\tilde Q_{ij})_{j\in \N}$, the $i$th row of $\tilde Q$, is the $i$th left eigenvector of $\tilde T$ and $\tilde P_j = (\tilde P_{ij})_{i\in \N}$, the $j$th column of $\tilde P$, is the $j$th right eigenvector of $\tilde T$. Finally, we note that both $\tilde P$ and $\tilde Q$ can be viewed as a variation of an infinite-dimensional Cauchy matrix, which will be advantageous for subsequent computations. 

We now turn our attention to the scaling factors $(\sigma_n)$ and $(\tau_n)$. These scaling factors satisfy important properties, as revealed in the following proposition.
\begin{proposition}
    The following hold:
    \begin{enumerate}
        \item $\pi_n(a, \lambda)$ is well defined for all $n\in \N$, where $\pi_n(a, \lambda)$ is given by \eqref{defn: pi(lambda, a)}.
        
        \item $\sigma_n \tau_n = \pi_n(a,\lambda)$ for all $n\in \N$.
        
        \item $(\sigma_n)$ is uniformly bounded from above and below.

        \item $(\pi_n(\lambda,a)\pi_n(a,\lambda))$ is uniformly bounded from above and below.
    \end{enumerate}
    \label{prop: scaling properties}
\end{proposition}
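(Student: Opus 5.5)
The plan is to prove the four items in order, but with the bounds in items~3 and~4 obtained together, since the argument for one feeds into the other. Throughout I use the normalization already fixed in the paper: $a_n>0$, $\lambda_n<0$, $b_n>0$, both $(a_n)$ and $(|\lambda_n|)$ strictly decreasing, and $a,\lambda\in c_0$.

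\emph{Item 1.} I first extract a summability fact from the hypothesis that $\pi_j(\lambda,a)=\prod_{m\neq j}(a_j-\lambda_m)/(a_j-a_m)$ converges. Since $a_m\to 0$, the denominator satisfies $a_j-a_m>0$ for all large $m$. Each factor can be written as $1+(a_m-\lambda_m)/(a_j-a_m)$, and $a_m-\lambda_m=a_m+|\lambda_m|>0$. So the logarithms of the factors are eventually positive and of size comparable to $(a_m+|\lambda_m|)/a_j$. Convergence of the product (to a nonzero limit) therefore forces $\sum_m (a_m+|\lambda_m|)<\infty$. Now $\pi_n(a,\lambda)=\prod_{m\neq n}(\lambda_n-a_m)/(\lambda_n-\lambda_m)$, and each factor equals $1+(\lambda_m-a_m)/(\lambda_n-\lambda_m)$. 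Here $|\lambda_n-\lambda_m|\to|\lambda_n|>0$ and no factor vanishes, because $\lambda_n<0<a_m$. Hence this product converges absolutely.

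\emph{Item 2.} Apply $\tilde Q\tilde P=I$ to the unit vector $e_i$ and read off the $i$th coordinate. Since $\tilde P e_i=\sigma_i v_i\in Y$, this gives
\[
1=\sigma_i\tau_i S_i, \qquad S_i:=\sum_j \frac{(\lambda_j-a_j)\pi_j(\lambda,a)}{\lambda_i-a_j}\cdot\frac{a_i-\lambda_i}{a_j-\lambda_i}.
\]
The series $S_i$ converges because it is the $i$th coordinate of $\tilde Q v_i$. To evaluate $S_i$, I use the finite truncations
\[
F_N(z)=\prod_{m\le N}\frac{z-\lambda_m}{z-a_m}=1+\sum_{j\le N}\frac{c_j^N}{z-a_j}, \qquad c_j^N=(a_j-\lambda_j)\prod_{m\neq j,\,m\le N}\frac{a_j-\lambda_m}{a_j-a_m}.
\]
Differentiating the partial-fraction form at $z=\lambda_i$ gives $F_N'(\lambda_i)=-\sum_j c_j^N/(\lambda_i-a_j)^2$. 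Differentiating the product form, and using $F_N(\lambda_i)=0$, gives $F_N'(\lambda_i)=1/\bigl((\lambda_i-a_i)\pi^N_i(a,\lambda)\bigr)$, where $\pi^N_i$ denotes the truncated product. Comparing the two expressions shows that the $N$-th truncation of $S_i$ equals $1/\pi^N_i(a,\lambda)$, up to replacing $\pi_j(\lambda,a)$ by its truncation. Letting $N\to\infty$ should give $S_i=1/\pi_i(a,\lambda)$, and hence $\sigma_i\tau_i=\pi_i(a,\lambda)$. The limit requires a dominated-convergence argument: bound $|c_j^N|\le C(a_j+|\lambda_j|)$ uniformly in $N$, which uses the summability from item~1, and note that $|\lambda_i-a_j|^{-2}\le|\lambda_i|^{-2}$. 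I expect this limit exchange to be the main technical obstacle.

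\emph{Items 3 and 4.} These rest on two coordinate estimates. In every admissible $X$ one has $\|e_j\|_Y=b_j$ and $b_i|y_i|\le\|y\|_Y$.
\begin{itemize}
\item Since $(\tilde P e_j)_j=\sigma_j$, the coordinate estimate gives $b_j|\sigma_j|\le\|\tilde P\|\,b_j$, so $|\sigma_j|\le\|\tilde P\|$.
\item Since $(\tilde Q e_i)_i=\tilde Q_{ii}=\tau_i\pi_i(\lambda,a)$, the same estimate gives $|\tau_i\pi_i(\lambda,a)|\le\|\tilde Q\|$.
\end{itemize}
Multiplying these and using item~2, I obtain
\[
|\pi_i(a,\lambda)\pi_i(\lambda,a)|=|\sigma_i|\,|\tau_i\pi_i(\lambda,a)|\le\|\tilde P\|\,\|\tilde Q\|,
\]
which is the upper bound in item~4. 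For the lower bound, write $x=|\lambda_n|$, $y=|\lambda_m|$, $p=a_n$, $q=a_m$. The $m$th factor of $\pi_n(a,\lambda)\pi_n(\lambda,a)$ is then
\[
\frac{(x+q)(p+y)}{|x-y|\,|p-q|}.
\]
Because the two sequences are ordered the same way, either $x>y,\ p>q$ or $x<y,\ p<q$. In the first case $|x-y||p-q|\le xp$, and in the second $|x-y||p-q|\le yq$; both are at most $(x+q)(p+y)$. So every factor is at least~$1$, and the product is at least~$1$. Finally, for the lower bound in item~3, item~2 gives $\sigma_i=\pi_i(a,\lambda)/\tau_i$, so
\[
|\sigma_i|=\frac{|\pi_i(a,\lambda)\pi_i(\lambda,a)|}{|\tau_i\pi_i(\lambda,a)|}\ge\frac{1}{\|\tilde Q\|}.
\]
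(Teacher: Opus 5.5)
Your items 3 and 4 follow the paper's argument: the diagonal entries of $\tilde P$ and $\tilde Q$, plus the factor-by-factor lower bound $\ge 1$. Your partial-fraction identity for $F_N$ is the paper's Cauchy-inverse identity $\tilde Q_i^\top(N)\tilde P_i=\sigma_i\tau_i/\pi_i(a,\lambda;N)$ in different form. Your direct proof of item 1 is correct and cleaner than the paper's: convergence of a single $\pi_j(\lambda,a)$ forces summability of $(a_m+|\lambda_m|)$, hence absolute convergence of $\pi_n(a,\lambda)$. The paper gets that limit only as a by-product of item 2.

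The gap is in the limit exchange of item 2, the step you flagged. Your bound $|c_j^N|\le C(a_j+|\lambda_j|)$ only helps if $C$ does not depend on $j$. That amounts to $\sup_j|\pi_j(\lambda,a)|<\infty$, which nothing available at this point gives, and summability of $(a_m+|\lambda_m|)$ does not imply it. For example, take $a_m=\rho^m$ and $|\lambda_m|=2a_m$, which is summable and compatible with (C5). Every factor of $\pi_j(\lambda,a)$ with $m<j$ equals $(a_j+2a_m)/(a_m-a_j)>2$, so $|\pi_j(\lambda,a)|>2^{j-1}$. Nor can you rely on the mere convergence of $S_i$. Since $\pi_j(\lambda,a)$ has sign $(-1)^{j-1}$, $S_i$ is an alternating series, and conditional convergence does not let you pass the weights $\pi_j(\lambda,a;N)/\pi_j(\lambda,a)\in[0,1]$ to the limit.

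The repair is what the paper does in Lemma~\ref{lem: convergence of Q(N)}:
\begin{itemize}
\item For $N\ge j$ the omitted factors ($m>N$) all exceed $1$. Hence $|c_j^N|\le|c_j|$, where $c_j=(a_j-\lambda_j)\pi_j(\lambda,a)$.
\item $\sum_j|c_j|<\infty$, because $c_j=-b_jk_j(\lambda)$, $k(\lambda)\in X^*$ is represented by a sequence in the dual sequence space, and $b\in X$ (H\"older). Equivalently, $(\tilde Q_{ij}/b_j)_j$ lies in that dual space and pairs absolutely with $(b_j(v_i)_j)\in X$.
\end{itemize}
Dominated convergence with dominating terms $(a_i+|\lambda_i|)|c_j|/|\lambda_i|^2$ then gives $S_i^N\to S_i$, and the rest of your argument goes through. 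This works uniformly for every $X$, including $\ell^1$, where the paper needs a separate weak-convergence argument.
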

\noindent Proposition \ref{prop: scaling properties} will be critical for proving all items of Theorem \ref{thm: nec cond}. First, \ref{cond: a_n and lambda_n exponential decay} as well as \ref{cond: a_n/b_n exponential decay} and \ref{cond: static ratio bounds} will follow directly from item 4, particularly the uniform upper bound on $(\pi_n(\lambda,a)\pi_n(a,\lambda))$. We prove Proposition \ref{prop: scaling properties} in Subsection \S \ref{subsec: proof of prop: scaling properties} and establish that \ref{cond: a_n and lambda_n exponential decay}, \ref{cond: a_n/b_n exponential decay}, and \ref{cond: static ratio bounds} are necessary consequences in Subsection \S \ref{subsec: proof of C2}.

The proof of \ref{cond: b_n exponential decay} and \ref{cond: g_n exponential decay} will require more effort. The third item of Proposition \ref{prop: scaling properties} provides the first step; in particular, because $(\sigma_n)$ is uniformly bounded from below and $\tilde P$ is bounded by hypothesis, the following proposition will hold.
\begin{proposition}
    Let $(g_n)$ be the sequence given by \eqref{defn: g_n}. The following hold:
    \begin{enumerate}
        \item If $X=\ell^\infty$, $c$, or $c_0$, then there exists a constant $C > 0$ such that for all $i\in \N$,
        \begin{align}
            \sum_{j=1}^{i-1} \frac{b_i}{b_j} \leq C \quad \text{and} \quad 
            \sum_{j=i}^{2i} \frac{g_j}{g_i} \leq C.
            \label{eqn: case 1 sums/ inequalities}
        \end{align}

        \item If $X=\ell^p$ for $1\leq p < \infty$, then there exists a constant $C>0$ such that for all $j \in \N$,
        \begin{align}
            \sum_{i=1}^{j-1} \left( \frac{g_j}{g_i} \right)^p \leq C \quad \text{and}\quad
            \sum_{i = j}^{2j} \left(\frac{b_i}{b_j} \right)^p \leq C.
            \label{eqn: case 2 sums/ inequalities}
        \end{align}
    \end{enumerate}
    \label{prop: bounded sums}
\end{proposition}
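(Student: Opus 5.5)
The plan is to transfer the boundedness of $\tilde P$ on $Y$ back to a bounded operator on $X$ with \emph{positive} entries, and then read off the inequalities from row sums (for $\ell^\infty$, $c$, $c_0$) or column norms (for $\ell^p$, $p<\infty$). By the definition of $\|\cdot\|_Y$, $\tilde P$ is bounded on $Y$ if and only if $M := B\tilde P B^{-1}$ is bounded on $X$, with $\|M\|_{\mathcal{B}(X)} = \|\tilde P\|_{\mathcal{B}(Y)}$.

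\textbf{Step 1: the entries of $M$.} Using \eqref{defn: P and Q} and $g_j = (a_j-\lambda_j)/b_j$ (recall $b_n>0$ without loss of generality), the entries of $M$ are
\begin{align*}
M_{ij} = \frac{b_i}{b_j}\,\tilde P_{ij} = \frac{\sigma_j\, b_i\, g_j}{a_i + |\lambda_j|}, \qquad i,j\in\N .
\end{align*}
Since the $\lambda_j$ are negative, every denominator is positive. By item 3 of Proposition \ref{prop: scaling properties}, $\sigma_j \ge \sigma_{\min} > 0$. Hence
\begin{align*}
M_{ij} \ge \sigma_{\min}\, \frac{b_i g_j}{a_i+|\lambda_j|} \ge 0 .
\end{align*}

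\textbf{Step 2: the two elementary comparisons.} These use only the decreasing arrangement of $(a_n)$ and $(|\lambda_n|)$, together with $a_n + |\lambda_n| = b_n g_n$.
\begin{itemize}
\item If $i \ge j$, then $a_i \le a_j$, so $a_i + |\lambda_j| \le a_j + |\lambda_j| = b_j g_j$. Therefore $\frac{b_i g_j}{a_i+|\lambda_j|} \ge \frac{b_i}{b_j}$.
\item If $i \le j$, then $|\lambda_j| \le |\lambda_i|$, so $a_i + |\lambda_j| \le a_i + |\lambda_i| = b_i g_i$. Therefore $\frac{b_i g_j}{a_i+|\lambda_j|} \ge \frac{g_j}{g_i}$.
\end{itemize}

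\textbf{Step 3, case $X=\ell^\infty$, $c$, or $c_0$.} Fix $i$ and $N$. Apply $M$ to the finitely supported vector with ones in the first $N$ coordinates; this vector lies in $c_0\subseteq c\subseteq\ell^\infty$ and has norm $1$. Because all entries of $M$ are nonnegative, the $i$th coordinate of the image is $\sum_{j\le N} M_{ij}$, and this is at most $\|M\|$. Letting $N\to\infty$ gives the row-sum bound $\sum_j M_{ij} \le \|M\|$.
\begin{itemize}
\item Restricting the sum to $j<i$ (so $i>j$) and applying the first comparison of Step 2 gives $\sum_{j=1}^{i-1} b_i/b_j \le \|M\|/\sigma_{\min}$.
\item Restricting the sum to $i \le j \le 2i$ and applying the second comparison gives $\sum_{j=i}^{2i} g_j/g_i \le \|M\|/\sigma_{\min}$.
\end{itemize}
This yields \eqref{eqn: case 1 sums/ inequalities} with $C = \|M\|/\sigma_{\min}$.

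\textbf{Step 3, case $X=\ell^p$ with $1\le p<\infty$.} Apply $M$ to the unit vector $e_j$. This gives the column norm bound $\big(\sum_i M_{ij}^p\big)^{1/p} \le \|M\|$.
\begin{itemize}
\item Restricting to $i<j$ and applying the second comparison gives $\sum_{i=1}^{j-1} (g_j/g_i)^p \le (\|M\|/\sigma_{\min})^p$.
\item Restricting to $j \le i \le 2j$ and applying the first comparison gives $\sum_{i=j}^{2j}(b_i/b_j)^p \le (\|M\|/\sigma_{\min})^p$.
\end{itemize}
This yields \eqref{eqn: case 2 sums/ inequalities}.

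\textbf{Expected difficulty.} There is no real analytic obstacle here; all the substantive input is item 3 of Proposition \ref{prop: scaling properties}. The one point to handle with care is the $c_0$ and $c$ cases: the all-ones vector is not in $c_0$, so the row-sum bound must be obtained from finite truncations. The nonnegativity of the entries $M_{ij}$ established in Step 1 is exactly what makes the truncated sums monotone in $N$, so the limit $N\to\infty$ can be taken.
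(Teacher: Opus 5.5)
Your route is the paper's route. Your $M=B\tilde PB^{-1}$ is just $P$, and the paper's computation of $y'=\tilde P y$ with $y=(x_n/b_n)$ is the same thing. Your two comparisons in Step 2 and your choice of test vectors (truncated vectors for $\ell^\infty$, $c$, $c_0$; unit vectors $e_j$ for $\ell^p$) also match the paper.

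There is one step that fails as written: the claim $\sigma_j\ge\sigma_{\min}>0$. Item 3 of Proposition \ref{prop: scaling properties} only bounds $|\sigma_j|$ from below; its proof bounds $|\sigma_i|^{-1}$. The scaling factors come from an arbitrary diagonalizing pair $P,Q$, and nothing in the hypotheses fixes their signs or phases. For example, the diagonalizing pair built in Section 3 uses $\sigma_n=\pi_n(a,\lambda)$. Its sign is $(-1)^{n-1}$, because the factor $1-\lambda_m/\lambda_n$ is negative exactly when $m<n$.

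So the entries $M_{ij}$ need not be nonnegative. Testing $M$ on the truncated all-ones vector then only gives
\[
\Bigl|\sum_{j\le N}\sigma_j\,\frac{b_i g_j}{a_i+|\lambda_j|}\Bigr|\le\|M\|.
\]
Because of possible cancellation, this says nothing about $\sum_{j<i}b_i/b_j$ or $\sum_{j=i}^{2i}g_j/g_i$. Your closing remark that nonnegativity is what makes the truncation argument work points at precisely this unjustified step.

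The repair is one line, and it is what the paper does. Test on $x_j=\overline{\sigma_j}/|\sigma_j|$ for $j\le N$ and $x_j=0$ otherwise; the paper writes $\sgn(\sigma_n)$ and takes $N=2i$. This vector is finitely supported, so it lies in $c_0\subseteq c\subseteq\ell^\infty$, and its norm is at most one. The $i$th coordinate of $Mx$ is then $\sum_{j\le N}|\sigma_j|\,b_i g_j/(a_i+|\lambda_j|)$, a sum of nonnegative terms bounded by $\|M\|$. Your Step 2 comparisons, with $L:=\inf_n|\sigma_n|>0$ in place of $\sigma_{\min}$, then give \eqref{eqn: case 1 sums/ inequalities}.

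In the $\ell^p$ case nothing breaks. The bound $\|Me_j\|_{\ell^p}\le\|M\|$ controls $\sum_i|M_{ij}|^p$, and $|M_{ij}|\ge L\,b_i g_j/(a_i+|\lambda_j|)$. You only need to write $|M_{ij}|$ instead of $M_{ij}$ there.
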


The final ingredient needed to establish \ref{cond: b_n exponential decay} and \ref{cond: g_n exponential decay} is the following proposition.

\begin{proposition}
    Let $(x_n)$ be an arbitrary positive real sequence. Suppose that there exists a constant $C>0$ such that either of the following hold for all $i\in \N$:
    \begin{align}
        \sum_{j=1}^{i-1} \frac{x_i}{x_j} \leq C \quad \text{or} \quad 
        \sum_{j=i}^{2i} \frac{x_j}{x_i} \leq C.
        \label{eqn: sums/inequalities for exponential decay}
    \end{align}
    Then, $(x_n)$ uniformly exponentially decays.
    \label{prop: exponential decay}
\end{proposition}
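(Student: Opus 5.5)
We treat the two hypotheses separately. Each time, the plan is to turn the summability bound into a geometric decay by a ``doubling'' or ``averaging'' argument: if a positive quantity summed over a window is bounded, then a fixed fraction of the mass must be lost over a window of bounded length.

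\emph{Second hypothesis.} Suppose $\sum_{j=i}^{2i} x_j/x_i \le C$ for all $i$. We first show a one-step-window contraction. Fix $N := \lceil 2C\rceil$.

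Among the indices $j\in\{i,\dots,i+N\}$ (all lie in $[i,2i]$ once $i\ge N$), the sum of $x_j/x_i$ is at most $C$. Hence some $j_1\in\{i+1,\dots,i+N\}$ satisfies $x_{j_1}\le (C/N)x_i\le x_i/2$. This gives only a pointwise drop at one index, and we need control of every $x_m$, $m\ge n$.

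For that we use the bound directly: for any $m\in[n,2n]$ we have $x_m\le Cx_n$. Now chain the drops. Starting from $n$, pick $j_1\in(n,n+N]$ with $x_{j_1}\le x_n/2$, then $j_2\in(j_1,j_1+N]$ with $x_{j_2}\le x_{j_1}/2$, and so on. Consecutive chosen indices are at distance at most $N$. Any $m\ge n$ (with $n\ge N$) lies within distance $N$ after some $j_k$ with $k\ge (m-n)/N-1$, and $m\le 2j_k$. Therefore
\[
x_m\le Cx_{j_k}\le C2^{-k}x_n\le 2C\,(2^{-1/N})^{m-n}x_n .
\]
Small indices $n<N$ are finitely many, and $x_m$ for $m<N$ are finitely many positive numbers. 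So we enlarge $\alpha$ to cover them: for $n<N\le m$ use $x_m\le \alpha' r^{m-N}x_N$ together with $x_N/x_n\le \max/\min$ over the first $N$ terms.

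\emph{First hypothesis.} Suppose $\sum_{j<i} x_i/x_j\le C$. Set $S_i:=\sum_{j<i}1/x_j$, so the hypothesis reads $x_i\le C/S_i$, i.e.\ $1/x_i\ge S_i/C$. Then
\[
S_{i+1}=S_i+1/x_i\ge (1+1/C)S_i ,
\]
so $S_m\ge (1+1/C)^{m-n}S_n$ for $m\ge n\ge 2$. Also $x_m\le C/S_m$ and $S_n\ge S_{n+1}/(1+C)$, which follows from $S_{n+1}=S_n+1/x_n\le S_n+CS_{n}\cdot(\text{?})$. This last step is the delicate one: we need a lower bound $1/x_n\lesssim S_n$ to compare $x_n$ with $1/S_n$ from below.

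Instead, compare through $S_{n+1}\ge 1/x_n$, i.e.\ $x_n\ge 1/S_{n+1}$. Then
\[
\frac{x_m}{x_n}\le \frac{C S_{n+1}}{S_m}\le C(1+1/C)^{-(m-n-1)} \quad\text{for } m\ge n+1,
\]
which is $(\alpha,r)$-exponential decay with $r=C/(C+1)$ and $\alpha=C+1$; the case $m=n$ is trivial. The index $i=1$ (empty sum, so $S_1=0$) only requires starting at $n\ge1$ with $S_{n+1}>0$, which always holds.

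The main obstacle is the second case: converting the local mass bound into geometric decay uniformly in $n$, especially ensuring the chained indices stay inside the windows $[j,2j]$ and handling the finitely many small indices.
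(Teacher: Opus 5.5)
Your proof is correct, and in both cases it takes a different route from the paper.

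\textbf{The paper's argument.} The paper treats both hypotheses with one device. It bounds the distance from $i$ to the farthest index $K_i$ (resp.\ $K_i'$) at which $x_j$ is still within a factor $2$ of $x_i$. It does this by comparing every intermediate term with $x_{K_i}$ (resp.\ $x_{K_i'}$) through the hypothesis. This yields a fixed offset $\delta\approx 2C^2$ with $x_{n+\delta}\le x_n/2$. The paper then chains at that fixed step; in the second case it also controls the leftover factor $x_{n+s}/x_n$ via the decomposition $n+s=qn+l$. It ends with $r=2^{-1/\delta}$ and $\alpha$ exponential in $\delta$.

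\textbf{Your first case.} You instead use the potential $S_i=\sum_{j<i}1/x_j$. The hypothesis reads $x_iS_i\le C$, so $S_{i+1}\ge(1+1/C)S_i$. The sandwich $1/S_{n+1}\le x_n$, $x_m\le C/S_m$ then gives $(C+1,\,C/(C+1))$-decay with no chaining at all. The rate is sharp: $x_n=r^n$ satisfies the hypothesis with $C=r/(1-r)$, for which $C/(C+1)=r$.

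\textbf{Your second case.} Pigeonholing over a window of length $N=\lceil 2C\rceil$ gives a halving index at linear rather than quadratic distance. Filling the gaps between consecutive halving indices directly with $x_m\le Cx_{j_k}$ replaces the paper's $qn+l$ step; this is legitimate because $m-j_k<N\le j_k$. The price is that the windows fit only for $n\ge N$, so finitely many small indices must be absorbed into $\alpha$. You do this correctly. It is in fact more careful than the paper, whose claim $x_{n+\delta'}/x_n\le 1/2$ for all $n$ is justified by its window argument only when $n\ge\delta'$. If you want $\alpha$ to depend on $C$ alone, use $x_{i+1}\le (C-1)x_i$, which holds for every $i$ because $i+1\le 2i$.

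\textbf{One correction to the write-up.} Delete the aborted comparison $S_n\ge S_{n+1}/(1+C)$. It is false in general, because the first hypothesis permits abrupt drops that make $x_nS_n$ arbitrarily small. Your final argument does not use it.
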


Combining Propositions \ref{prop: bounded sums} and \ref{prop: exponential decay}, if $X$ is $\ell^\infty$, $c$, or $c_0$, then it immediately follows that $(b_n)$ and $(g_n)$ uniformly exponentially decay. If $X = \ell^p$ for $1 \leq p < \infty$, then $(b_n^p)$ and $(g_n^p)$ uniformly exponentially decay. It is not hard to see that if a sequence $(x_n^p)$ with $p >0$ is $(\alpha,r)$-exponentially decaying, then $(x_n)$ is $(\alpha^{1/p},r^{1/p})$-exponentially decaying. Hence, $(b_n)$ and $(g_n)$ uniformly exponentially decay for the second case of $X$ as well. Through these arguments, we have shown that \ref{cond: b_n exponential decay} and \ref{cond: g_n exponential decay} hold for all cases of $X$. All that is left to do is prove Propositions \ref{prop: bounded sums} and \ref{prop: exponential decay}, which we do in Subsections \S \ref{subsec: proof of prop: bounded sums} and \S \ref{subsec: proof of prop: exponential decay}, respectively.

\subsection{Proof of Proposition \ref{prop: scaling properties}}\label{subsec: proof of prop: scaling properties}
We begin with preliminaries for the proof of items 1 and 2. For $n\in \N$, to establish that $\pi_n(a,\lambda)$ is well-defined, we will first construct $\pi_n(a,\lambda;N)$, for $N\in \N$, as an approximation of $\pi_n(a,\lambda)$ such that $\pi_n(a,\lambda;N) \to \pi_n(a,\lambda)$ as $N\to \infty$. Then, we will show that $\lim_{N\to \infty} \pi_n(a,\lambda; N)$ exists for all $n\in \N$. The construction is as follows: for each $N\in \N$, let $\pi(a, \lambda; N) = (\pi_n(a, \lambda; N))$ be an eventually zero sequence defined as follows:
\begin{align}
    \pi_n(a, \lambda; N) &:= \begin{cases}
        \prod_{m=1, m\neq n}^N \frac{1 - a_m / \lambda_n}{1 - \lambda_m/\lambda_n} \quad &\text{if } 1 \leq n \leq N \\
        0 &\text{if } n\geq N+1
    \end{cases}, \quad \text{for }n\in \N.
    \label{defn: N approx of pi sequence}
\end{align}
Note that by definition \eqref{defn: pi(lambda, a)}, $\pi_n(a,\lambda) = \lim_{N\to \infty}\pi_n(a, \lambda; N)$, if the limit exists. We also let $\pi(\lambda,a; N) = (\pi_n(\lambda,a;N))$ for $N\in \N$, where $\pi_n(\lambda,a;N)$ is given by \eqref{defn: N approx of pi sequence} but with $a$ and $\lambda$ swapped. For $N\in \N$, we construct $\tilde Q(N) : Y \to Y$, an approximation of $\tilde Q$, which is defined the same way as $\tilde Q$ in \eqref{defn: P and Q} except with $\pi_j(\lambda,a;N)$ replacing $\pi_j(\lambda,a)$:
\begin{align*}
    \tilde Q(N) = \left[ \tilde Q_{ij}(N) := \frac{\tau_i(\lambda_j - a_j) \pi_j(\lambda, a; N)}{\lambda_i - a_j} \right]_{1\leq i,j < \infty}.
\end{align*}
Note that $\tilde Q_i^\top (N) \in Y^*$, the $i$th row of $\tilde Q(N)$, is a sequence with only $N$ nonzero entries, i.e.,
\begin{align*}
    \tilde Q_i^\top (N) = (\tilde Q_{i1}(N), \ldots, \tilde Q_{iN}(N), 0, 0, \ldots).
\end{align*}
In order to establish items 1 and 2, we will need to show that $\tilde Q_i^\top(N) \to \tilde Q_i^\top$ as $N\to \infty$ for all $i\in \N$. For all cases of $X$ except $X=\ell^1$, we establish the strong convergence of $\tilde Q_i^\top(N)$, i.e., $\|\tilde Q_i^\top - \tilde Q_i^\top(N)\|_{Y^*} \to 0$. When $X=\ell^1$, we show that weak convergence holds, i.e., for all $y\in Y$, $\tilde Q_i^\top(N) y \to \tilde Q_i^\top y$. These convergence results are summarized in the lemma below.
\begin{lemma}[Convergence of $\tilde Q_i^\top(N)$]
    The following hold:
    \begin{enumerate}
        \item If $X=c$, $c_0$, or $\ell^p$ for $1 < p \leq \infty$, then for all $i\in \N$,
        \begin{align*}
            \lim_{N\to \infty} \| \tilde Q_i^\top - \tilde Q_i^\top (N)\|_{Y^*} = 0.
        \end{align*}

        \item If $X = \ell^1$, then for all $y\in Y$ and for all $i\in \N$,
        \begin{align*}
            \lim_{N\to \infty} |\tilde Q_i^\top y - \tilde Q_i^\top(N) y|=0.
        \end{align*}
    \end{enumerate}
    \label{lem: convergence of Q(N)}
\end{lemma}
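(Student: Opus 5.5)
The plan is to rewrite the $Y^*$-norm of a row in terms of the $X^*$-norm of a sequence built from the feedback gains $k(\lambda)$. Since $k(\lambda)\in X^*$ by hypothesis, the lemma then reduces to a dominated-convergence argument. For $y\in Y$ we have $\tilde Q_i^\top y=\sum_j (\tilde Q_{ij}/b_j)(b_jy_j)$, and $y\mapsto (b_jy_j)$ is an isometry $Y\to X$. Hence a row $(r_j)\in Y^*$ has norm $\|(r_j/b_j)_j\|_{X^*}$. Using \eqref{defn: feedback k}, $k_j(\lambda)=(\lambda_j-a_j)\pi_j(\lambda,a)/b_j$ (recall $b_j>0$), so
\begin{align*}
\frac{\tilde Q_{ij}}{b_j}=\frac{\tau_i\,k_j(\lambda)}{\lambda_i-a_j},\qquad
\frac{\tilde Q_{ij}-\tilde Q_{ij}(N)}{b_j}=\frac{\tau_i\,k_j(\lambda)}{\lambda_i-a_j}\Bigl(1-\frac{\pi_j(\lambda,a;N)}{\pi_j(\lambda,a)}\Bigr),
\end{align*}
where the factor in parentheses is read as $1$ for $j>N$; if $\pi_j(\lambda,a)=0$, the two quantities in the difference are interpreted directly rather than as a ratio. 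Because $a_j>0$ and $\lambda_i<0$, we have $|\lambda_i-a_j|\ge|\lambda_i|$. Thus, for fixed $i$, every entry of the difference is bounded by $(|\tau_i|/|\lambda_i|)\,|k_j(\lambda)|\,\delta_j(N)$, where $\delta_j(N)$ is the modulus of the difference of the normalized $\pi$'s.

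The second step is to control $\delta_j(N)$ uniformly in $N$ and show it vanishes pointwise. After the monotone rearrangement $a_1>a_2>\cdots$, for $j\le N$ we have
\begin{align*}
\pi_j(\lambda,a)=\pi_j(\lambda,a;N)\prod_{m>N}\frac{a_j-\lambda_m}{a_j-a_m},
\end{align*}
taking the limit of the truncated products. Each factor equals $1+(a_m-\lambda_m)/(a_j-a_m)\ge 1$, since $a_m<a_j$ and $\lambda_m<0$. Hence $|\pi_j(\lambda,a;N)|\le|\pi_j(\lambda,a)|$, giving $\delta_j(N)\le 1$ for all $j,N$. Moreover $\pi_j(\lambda,a;N)\to\pi_j(\lambda,a)$ for each fixed $j$ by the very definition \eqref{defn: pi(lambda, a)}, which is meaningful because $k(\lambda)$ is assumed well defined. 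Therefore $\delta_j(N)\to 0$ as $N\to\infty$ for every $j$.

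It remains to convert this into the two convergence statements. In case 1 the dual $X^*$ is $\ell^q$ with $q<\infty$: $q=p/(p-1)$ for $\ell^p$ with $1<p<\infty$, and $\ell^1$ for $c_0$ and for $c$. For $X=\ell^\infty$ we are restricted to representations \eqref{eqn: k acting as a sum}, so the rows lie in $\ell^1$ with the inherited norm. In all these cases the entries are dominated by the fixed $\ell^q$ sequence $C_i|k_j(\lambda)|$ and tend to zero pointwise. Dominated convergence for sums then gives $\|\tilde Q_i^\top-\tilde Q_i^\top(N)\|_{Y^*}\to0$.

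In case 2, $X=\ell^1$ and $X^*=\ell^\infty$, so tails need not vanish in norm; this is the main obstacle and the reason only weak convergence is claimed. Here I fix $y\in Y$, so that $(b_jy_j)\in\ell^1$. The summands of $(\tilde Q_i^\top-\tilde Q_i^\top(N))y$ are bounded by $C_i\|k(\lambda)\|_{\ell^\infty}|b_jy_j|$, which is summable, and they tend to $0$ pointwise. Dominated convergence again yields $\tilde Q_i^\top(N)y\to\tilde Q_i^\top y$.

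The key point I expect to need care is the domination $|\pi_j(\lambda,a;N)|\le|\pi_j(\lambda,a)|$. It relies on the ordering $a_m<a_j$ for $m>N\ge j$ and on the sign $\lambda_m<0$. A uniform-in-$j$ estimate of the tail products is not available, since $a_N-a_{N+1}$ may be tiny; this is why I would avoid one and use domination instead.
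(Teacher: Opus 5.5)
Your proposal is correct and follows essentially the same route as the paper. The paper also writes $\pi_j(\lambda,a;N)=r_j(N)\,\pi_j(\lambda,a)$ with $|r_j(N)|\le 1$, using the monotonicity of $(a_n)$. It then uses $r_j(N)\to 1$ pointwise and runs an explicit $\epsilon$/tail-splitting argument, which is exactly dominated convergence in $\ell^q$ ($q<\infty$) for item 1 and against $(b_jy_j)\in\ell^1$ for item 2.

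The only difference is where the dominating sequence comes from. The paper takes it from $\tilde Q_i^\top\in Y^*$, i.e., $(\tilde Q_{ij}/b_j)_j\in\ell^q$. You derive it directly from $k(\lambda)\in X^*$ via $|\lambda_i-a_j|\ge|\lambda_i|$, which is a slightly more self-contained justification of the same fact.
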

The proof of Lemma \ref{lem: convergence of Q(N)} takes some effort, so we will save it for \S\ref{subsubsec: convegence of Q(N)} at the end of this subsection. Moving forward with the proof of Proposition \ref{prop: scaling properties}, we establish below items 1 and 2 concurrently since these two results are intertwined.

\begin{proof}[Proof of items 1 and 2.]
    We begin by introducing the $N\times N$ matrices $\tilde Q'(N)$ and $\tilde P'$, which are obtained by truncating $\tilde Q(N)$ and $\tilde P$ as follows: 
    \begin{align*}
        \tilde Q'(N) := [\tilde Q_{ij}(N)]_{1 \leq i,j \leq N} \quad \text{and} \quad
        \tilde P' := [\tilde P_{ij}]_{1\leq i,j \leq N}.
    \end{align*}
    By construction, we have that
    \begin{align}
        \tilde Q_i^\top(N) \tilde P_i = \tilde Q_i'^\top(N) \tilde P_i', \quad \text{for all } 1\leq i \leq N.
        \label{eqn: Q(N)P is equal to Q prime times P prime}
    \end{align}
    Next, we highlight that $\tilde Q'(N)$ and $\tilde P'$ are Cauchy-like matrices, which is advantageous since the inverse of a Cauchy matrix has an explicit, closed-form expression. To elaborate on this point, consider the following $N \times N$ Cauchy matrix
    \begin{align}
        G:=\left[\frac{1}{a_i - \lambda_j} \right]_{1\leq i, j \leq N},
        \label{eqn: Cauchy matrix}
    \end{align}
    where the $a_i$'s and $\lambda_j$'s are distinct, but otherwise arbitrary, complex numbers. Then, the inverse of $G$ is given by 
    \begin{align}
            G^{-1} = \left[G_{ij}^{-1} = \frac{(\lambda_i - a_i)(a_j - \lambda_j) \pi_i(a, \lambda; N)\pi_j(\lambda, a; N)}{\lambda_i - a_j} \right]_{1 \leq i,j \leq N}.
            \label{eqn: Cauchy inverse}
    \end{align}
    For the proof of \eqref{eqn: Cauchy inverse}, see \cite{schechter1959inversion} for details. Combining equation \eqref{eqn: Q(N)P is equal to Q prime times P prime} with the Cauchy inverse formula \eqref{eqn: Cauchy inverse}, it follows that 
        \begin{align*}
        \pi_i(a, \lambda; N) = \frac{\sigma_i \tau_i}{\tilde Q_i'^\top(N) \tilde P'_i} = \frac{\sigma_i \tau_i}{\tilde Q_i^\top(N) \tilde P_i}, \quad \text{for } 1\leq i\leq N.
        \end{align*}
    Taking the limit as $N\to \infty$ yields
    \begin{align*}
        \lim_{N\to \infty} \pi_i(a, \lambda; N) = \frac{\sigma_i \tau _i}{\lim_{N\to \infty} \tilde Q_i^\top(N) \tilde P_i} = \frac{\sigma_i\tau_i}{\tilde Q_i^\top \tilde P_i} = \sigma_i \tau_i,
    \end{align*}
    where the second to last equality follows from Lemma \ref{lem: convergence of Q(N)} and the last inequality follows from the fact that $\tilde P$ and $\tilde Q$ are inverses of each other by hypothesis. Recall that $\pi_i(a, \lambda; N) \to \pi_i(a,\lambda)$ as $N\to \infty$ by definition. Thus, we have shown that $\pi_i(a,\lambda) = \sigma_i \tau_i$ for all $i\in \N$, which completes the proof of items 1 and 2.
\end{proof}
We now prove items 3 and 4.

\begin{proof}[Proof of item 3.]
    First, since $\tilde P$ is bounded by assumption and its diagonal entries are given by $\tilde P_{ii} = \sigma_i$ for all $i\in \N$, it follows that $(\sigma_n)$ is uniformly bounded from above. We now prove that $(\sigma_n)$ is uniformly bounded from below. Multiplying the equation from item 2 on both sides by $\pi_i(\lambda,a)$, we have $\sigma_i \tau_i \pi_i(\lambda,a) = \pi_i(\lambda,a) \pi_i(a, \lambda)$, which in turn yields
    \begin{align*}
      |\sigma_i|^{-1} = \frac{|\tau_i \pi_i(\lambda, a)|}{|\pi_i(\lambda, a) \pi_i(a, \lambda)|}, \quad \text{for all } i\in \N.
    \end{align*}
    Since the $ii$th entry of $\tilde Q$ is $\tilde Q_{ii} = \tau_i \pi_i(\lambda,a)$ and $\tilde Q$ is bounded by assumption, it follows that $(\tau_n \pi_n(\lambda,a))$ is uniformly bounded above. Next, we argue that $|\pi_n(\lambda, a)\pi_n(a, \lambda)| \geq 1$ for all $n\in \N$. For notational convenience, we define for $n\in \N$,
    \begin{align}
        \begin{split}
            Z^-_n &:= \prod_{m=1}^{n-1} \frac{(1 - \lambda_m/a_n)(1-a_m/\lambda_n)}{(1-a_m/a_n)(1-\lambda_m/\lambda_n)}, \quad
            Z^+_n := \prod_{m=n+1}^\infty \frac{(1 - \lambda_m/a_n)(1-a_m/\lambda_n)}{(1-a_m/a_n)(1-\lambda_m/\lambda_n)}.
            \label{defn: Z_n plus, Z_n minus}
        \end{split}
    \end{align}
    Note that $|\pi_n(\lambda,a)\pi_n(a,\lambda)| = |Z_n^-Z_n^+|$ for all $n\in \N$. We show that $|Z_n^-|\geq1$ and $|Z_n^+|\geq 1$. Recall that $\lambda_n<0$ for all $n\in \N$, so, e.g., $1-\lambda_m/a_n = 1 + |\lambda_m|/a_n$ for all $m,n\in \N$. For $m<n$, we have $a_m/a_n > 1$ and $\lambda_m/\lambda_n > 1$ because $(a_n)$ and $(|\lambda_n|)$ monotonically decrease. Hence,
    \begin{align*}
        |Z^-_n| = \prod_{m=1}^{n-1} \frac{(1 + |\lambda_m|/a_n)(1+a_m/|\lambda_n|)}{|1 -a_m/a_n||1-\lambda_m/\lambda_n|}
        &\geq \prod_{m=1}^{n-1} \frac{(1 + |\lambda_m|/a_n)(1 +a_m/|\lambda_n|)}{|a_m/a_n||\lambda_m/\lambda_n|} \\
        &\geq \prod_{m=1}^{n-1} \frac{|\lambda_m/a_n||a_m/\lambda_n|}{|a_m/a_n||\lambda_m/\lambda_n|} = 1.
    \end{align*}
    For the $m>n$ case, we have $0<a_m/a_n <1$ and $0<\lambda_m/\lambda_n < 1$. Hence, the denominator satisfies $0<|1 - a_m/a_n||1 - \lambda_m/\lambda_n|<1$. Since the numerator of the argument is always greater than one, it follows that $|Z_n^+| \geq 1$. 
    
    Combining the above arguments, we have that
    \begin{align*}
        |\sigma_i|^{-1} = \frac{|\tau_i \pi_i(\lambda,a)|}{|\pi_i(\lambda, a) \pi_i(a, \lambda)|} \leq |\tau_i \pi_i(\lambda,a)| \leq \sup_{i\in \N}|\tau_i \pi_i(\lambda,a)| < \infty.
    \end{align*}
    This shows that $(\sigma_n)$ is uniformly bounded from below, completing the proof of item 3.
\end{proof}
\begin{proof}[Proof of item 4.] We have already shown that $|\pi_n(\lambda,a)\pi_n(a, \lambda)| \geq 1$ for all $n\in \N$ in the previous proof. We now establish the uniform upper bound, which follows immediately from prior results. From item 2, we have $\pi_n(\lambda, a) \pi_n(a, \lambda) = \sigma_n \tau_n \pi_n(\lambda, a)$ for all $n\in \N$. Then, since $(\sigma_n)$ and $(\tau_n\pi_n(\lambda,a))$ are uniformly bounded above, as established in the previous proof, it follows that $(\pi_n(\lambda,a)\pi_n(a,\lambda))$ is uniformly bounded above.
\end{proof}
We conclude this subsection with the proof of Lemma \ref{lem: convergence of Q(N)}.
\subsubsection{Proof of Lemma \ref{lem: convergence of Q(N)}}\label{subsubsec: convegence of Q(N)}
\begin{proof}[Proof of item 1.]
    Suppose that $X=c$, $c_0$, or $\ell^p$ for $1 < p \leq \infty$. Let $Q_i^\top := (\tilde Q_{ij}/b_j)_{j\in \N}$. Since $\tilde Q_i^\top \in Y^*$, it follows that $ Q_i^\top \in \ell^q$, where $1 \leq q < \infty$ satisfies $1/p + 1/q = 1$. We show that for any $\epsilon > 0$, there exists a positive integer $N_{\epsilon}$ such that
    \begin{align*}
        \| \tilde Q_i^\top - \tilde Q_i^\top(N) \|_{Y^*} = \| Q_i^\top - Q_i^\top(N) \|_{X^*} < \epsilon, \quad \text{ for all } N\geq N_\epsilon.
    \end{align*}
    Because $Q_i^\top \in \ell^q$, for any $\epsilon>0$, there exists a positive integer $N'$ such that
    \begin{align}
        \sum_{j=N' + 1}^\infty |Q_{ij}|^q < \frac{\epsilon^q}{2}.
        \label{eqn: existence of N prime}
    \end{align}
    Next, for $n \leq N$, we define
    \begin{align}
        r_n(N) := \prod_{m = N + 1}^\infty \frac{1 - a_m / a_n}{1 - \lambda_m / a_n}.
        \label{defn: r_n(N)}
    \end{align}
    Note that $\pi_n(\lambda, a; N) = r_n(N) \pi_n(\lambda, a)$ for all $n\leq N$; hence, $Q_{ij}(N) = r_j(N) Q_{ij}$ for all $j\leq N$. Next, since $(a_n)$ monotonically decreases, $0 < 1 - a_m/ a_n < 1$ holds for $m > n$. It follows that
    \begin{align*}
        |r_n(N)| = \left|\frac{\pi_n(\lambda, a; N)}{\pi_n(\lambda, a)}\right| \leq 1, \quad \text{ for } n \leq N.
    \end{align*}
    An immediate consequence is that 
    \begin{align}
        |Q_{ij} - Q_{ij}(N)| = \left[1 - \frac{\pi_j(\lambda, a; N)}{\pi_j(\lambda, a)} \right] |Q_{ij}|  \leq|Q_{ij}|, \quad \text{ for } j\leq N.
        \label{eqn:  Q tilde - Q(N) tilde is less than Q tilde}
    \end{align}
    Moreover, since $ \pi_n(\lambda, a; N) \to \pi_n(\lambda, a)$ as $N\to \infty$ by definition, it follows that $r_n(N) \to 1$ as $N\to \infty$. Hence, given the integer $N'$ satisfying \eqref{eqn: existence of N prime}, there exists an integer $N_\epsilon \geq N'$ such that
    \begin{align}
        0 < \left[1 - \frac{\pi_j(\lambda, a; N)}{\pi_j(\lambda, a)} \right]^q \leq \frac{\epsilon^q}{2 \|Q_i^\top\|_{\ell^q}^q}, \quad \text{ for all } j\leq N' \text{ and for all } N \geq N_\epsilon.
        \label{eqn: existence of N epsilon}
    \end{align}
    Putting everything together, we have for all $N \geq N_\epsilon$,
    \begin{align*}
        \|Q_i^\top - Q_i^\top (N)\|_{\ell^q}^q &= \sum_{j = 1}^{N'} |Q_{ij} - Q_{ij} (N)|^q + \sum_{j = N'+1}^N |Q_{ij} - Q_{ij} (N)|^q + \sum_{j = N+1}^\infty |Q_{ij}|^q \\
        &\leq \sum_{j = 1}^{N'} \left[1 - \frac{\pi_j(\lambda, a; N)}{\pi_j(\lambda, a)}\right]^q |Q_{ij}|^q + \sum_{j = N'+1}^\infty |Q_{ij}|^q \\
        &\leq \frac{\epsilon^q}{2 \|Q_i^\top \|_{\ell^q}^q} \sum_{j=1}^{N'} |Q_{ij}|^q + \sum_{j = N'+1}^\infty |Q_{ij}|^q \\
        &< \frac{\epsilon^q}{2 \|Q_i^\top \|_{\ell^q}^q} \sum_{j=1}^{N'} |Q_{ij}|^q + \frac{\epsilon^q}{2} < \frac{\epsilon^q}{2} + \frac{\epsilon^q}{2} = \epsilon^q,
    \end{align*}
    where the first inequality follows from \eqref{eqn:  Q tilde - Q(N) tilde is less than Q tilde}, the second inequality from \eqref{eqn: existence of N epsilon}, and the third inequality from \eqref{eqn: existence of N prime}. This completes the proof of item 1.
    
    \medskip
    \noindent \textit{Proof of item 2.} Suppose that $X=\ell^1$. Then, $ Q_i^\top \in \ell^\infty$, where $Q_i^\top$ is defined the same as above in the proof of item 1. Given an arbitrary $y\in Y$, let $x := (b_n y_n) \in X$. Clearly, we have $\tilde Q_i^\top y = Q_i^\top x$. Since $x \in \ell^1$, for any $\epsilon>0$, there exists a positive integer $N'$ such that 
    \begin{align}
        \sum_{j=N'+1}^\infty |x_j| < \frac{\epsilon}{2\|Q_i^\top \|_{\ell^\infty}}.
        \label{eqn: tail bound on P tilde}
    \end{align}
    As in the first case, we have $Q_{ij}(N) = r_j(N) Q_{ij}$ for all $j \leq N$, where $r_j(N)$ is given by~\eqref{defn: r_n(N)}. Since $\lim_{N\to \infty} r_j(N)=1$, given the integer $N'$ satisfying \eqref{eqn: tail bound on P tilde}, there exists an integer $N_\epsilon \geq N'$ such that 
    \begin{align}
        0< 1 - \frac{Q_{ij}(N)}{Q_{ij}} \leq \frac{\epsilon}{2 \|x\|_{\ell^1} \|Q_i^\top\|_{\ell^\infty}}, \quad \text{for all } j\leq N' \text{ and for all } N\geq N_\epsilon. 
        \label{eqn: N epsilon for case 2}
    \end{align}
    Therefore, it follows that for all $N \geq N_{\epsilon}$,
    \begin{align*}
        |Q_i^\top x - Q_{ij}(N)^\top x| &= \left|\sum_{j=1}^\infty \left[Q_{ij} - Q_{ij}(N) \right]x_j \right| \\
        &\leq \sum_{j=1}^\infty |Q_{ij}| \left[1 - \frac{Q_{ij}(N)}{Q_{ij}}\right] |x_j| + \sum_{j=N'+1}^\infty |Q_{ij} - Q_{ij}(N)||x_j| \\
        &\leq \|Q_i^\top\|_{\ell^\infty} \sum_{j=1}^{N'} \left[1 - \frac{Q_{ij}(N)}{Q_{ij}}\right] |x_j| + \|Q_i^\top\|_{\ell^\infty}\sum_{j=N'+1}^\infty |x_j| \\
        &< \frac{\epsilon}{2 \|x\|_{\ell^1}} \sum_{j=1}^{N'} |x_j| + \frac{\epsilon}{2} < \frac{\epsilon}{2} + \frac{\epsilon}{2} = \epsilon,
    \end{align*}
    where the second inequality follows from \eqref{eqn:  Q tilde - Q(N) tilde is less than Q tilde} and from the fact that $Q_{ij}(N)=0$ for $j> N$ by definition, and the third inequality follows from \eqref{eqn: tail bound on P tilde} and \eqref{eqn: N epsilon for case 2}. This completes the proof.
\end{proof}

%%%%%%%%%%%%%%%%%%%%%%%%%%%%%%%%%%%%%%%%%%%%%%%%%%%%%%%%%%%%%%
\subsection{Proof of \ref{cond: a_n and lambda_n exponential decay}, \ref{cond: a_n/b_n exponential decay}, and \ref{cond: static ratio bounds}}\label{subsec: proof of C2}

In this subsection, we show that these conditions necessarily follow from the fact that $(\pi_n(\lambda,a) \pi_n(a,\lambda))$ uniformly bounded above.
\begin{proof}[Proof of \ref{cond: a_n and lambda_n exponential decay}.]
    Recall that $\pi_n(\lambda,a)\pi_n(a,\lambda) = Z_n^-Z_n^+$, where $Z_n^-$ and $Z_n^+$ are given by \eqref{defn: Z_n plus, Z_n minus}. Since $(Z_n^-Z_n^+)\in \ell^\infty$ by Proposition \ref{prop: scaling properties} and $|Z_n^-| \geq 1$ for all $n\in \N$, which was shown in the proof of item 3 of Proposition \ref{prop: scaling properties}, it follows that $(Z_n^+)\in \ell^\infty$. Since $0<a_m/a_n < 1$ and $0 <\lambda_m/\lambda_n < 1$ for $m>n$ due to monotonicity, we have
    \begin{align*}
         |Z^+_n| = \prod_{m=n+1}^\infty \frac{(1 + |\lambda_m|/a_n)(1+a_m/|\lambda_n|)}{(1 -a_m/a_n)(1-\lambda_m/\lambda_n)} \geq \prod_{m=n+1}^\infty \frac{1}{1 - a_m/a_n}.
    \end{align*}
    For the sake of contradiction, assume that there does not exist an $r\in (0,1)$ such that $a_m/a_n \leq r^{m-n}$ for $m\geq n$. Consequently, there exists a subsequence $(a_{n_k})_{k\in \N}$ such that $\lim_{k\to \infty} a_{n_k + 1}/a_{n_k} = 1$. This implies that 
    \begin{align*}
        \lim_{k\to \infty} \frac{1}{1 - a_{n_k + 1}/a_{n_k}} = \infty,
    \end{align*}
    which contradicts $(Z_n^+)\in \ell^\infty$. Therefore, there exists $r\in (0,1)$ such that $a_m/a_n \leq r^{m-n}$ for $m\geq n$, i.e., $(a_n)$ uniformly exponentially decays with $\alpha=1$. Next, since we also have
    \begin{align*}
         |Z^+_n| = \prod_{m=n+1}^\infty \frac{(1 + |\lambda_m|/a_n)(1+a_m/|\lambda_n|)}{(1 -a_m/a_n)(1-\lambda_m/\lambda_n)} \geq \prod_{m=n+1}^\infty \frac{1}{1 - \lambda_m/\lambda_n},
    \end{align*}
    the same arguments as before show that $(|\lambda_n|)$ uniformly exponentially decays with $\alpha=1$. This completes the proof.
\end{proof}
For the following two proofs, we assume that there exists an $r' \in (0,1)$ satisfying $a_{n+1}/a_n \geq r'$ for all $n\in \N$. We begin with the proof of \ref{cond: static ratio bounds}. Then, the proof of \ref{cond: a_n/b_n exponential decay} is easily established from \ref{cond: g_n exponential decay} and \ref{cond: static ratio bounds}.

\begin{proof}[Proof of \ref{cond: static ratio bounds}.]
    For the sake of contradiction, suppose that there does not exist a $c_2>0$ such that $|\lambda_n|/a_n \leq c_2$ for all $n\in \N$. Recall from the previous proof that $(Z_n^+) \in \ell^\infty$. Again using the monotonicity of $(a_n)$ and $(|\lambda_n|)$, we have
    \begin{align*}
         |Z^+_n| = \prod_{m=n+1}^\infty \frac{(1 + |\lambda_m|/a_n)(1 + a_m/|\lambda_n)|}{|1-a_m/a_n||1-\lambda_m/\lambda_n|} \geq \prod_{m=n+1}^\infty (1 + |\lambda_m|/a_n) 
        > 1 + r'|\lambda_{n+1}|/a_{n+1}.
    \end{align*}
    Hence, we arrive at the contradiction that $(Z_n^+)\in \ell^\infty$ but $|\lambda_{n+1}|/a_{n+1}$ is not bounded above, which establishes that $c_2$ must exist. Next, since it also holds that $$|Z_n^+| \geq \prod_{m=n+1}^\infty (1 + a_m/|\lambda_n|) > 1 + r'a_n/|\lambda_n|,$$ it follows that there exists a $c_1>0$ satisfying $c_1 \leq |\lambda_n|/a_n$ for all $n\in \N$ using similar arguments as above. This completes the proof.
\end{proof}

\begin{proof}[Proof of \ref{cond: a_n/b_n exponential decay}.]
    From \ref{cond: g_n exponential decay}, there exist $\alpha>0$, $r\in (0,1)$ such that $(g_n)$ is $(\alpha,r)$-exponentially decaying. Then, for all $m\geq n$,
    \begin{align*}
        \frac{a_m}{b_m} < \frac{a_m + |\lambda_m|}{b_m} \leq \left(\frac{a_n + |\lambda_n|}{b_n}\right) \alpha r^{m-n} \leq \left(\frac{a_n}{b_n} \right)(1 + c_2)\alpha r^{m-n},
    \end{align*}
    where the last inequality follows from \ref{cond: static ratio bounds}. This completes the proof.
\end{proof}

%%%%%%%%%%%%%%%%%%%%%%%%%%%%%%%%%%%%%%%%%%%%%%%%%%%%%%%%%%%%%%%%%%%%%%

\subsection{Proof of Proposition \ref{prop: bounded sums}}\label{subsec: proof of prop: bounded sums}
Let $y := (x_n/b_n) \in Y$ for an arbitrary sequence $(x_n) \in X$. We define
\begin{align*}
    y' := \tilde P y = \left( \sum_{j=1}^\infty \frac{(a_j - \lambda_j)\sigma_jx_j}{(a_i - \lambda_j)b_j} \right)_{i\in \N}.
\end{align*}
Since $y'\in Y$, it follows that $(b_iy'_i)_{i\in \N} \in X$. We now consider the two different cases of $X$.

\begin{proof}[Proof of item 1.]
Let $X= \ell^\infty$, $c$, or $c_0$. For an arbitrary fixed $i\in \N$, we choose $(x_n)\in X$ to be defined as follows
\begin{align*}
    x_n := \begin{cases}
        \sgn(\sigma_n), \quad &\text{for } 1\leq n \leq 2i\\
        0, \quad &\text{for } n > 2i
    \end{cases},
    \quad \text{for } n\in \N.
\end{align*}
Recall from item 3 of Proposition \ref{prop: scaling properties} that there exists a constant $L > 0$ such that $|\sigma_n| \geq L$ for all $n\in \N$. Hence, we have
\begin{align*}
    |b_i y'_i| &= \left|\sum_{j=1}^\infty \frac{(a_j - \lambda_j)\sigma_j x_j/b_j}{(a_i - \lambda_j)/b_i} \right| = \sum_{j=1}^{2i} \frac{(a_j + |\lambda_j|)|\sigma_j|/b_j}{(a_i + |\lambda_j|)/b_i} \geq L \sum_{j=1}^{2i} \frac{(a_j + |\lambda_j|)/b_j}{(a_i + |\lambda_j|)/b_i}.
\end{align*}
Since $(b_i y'_i) \in X$, we have $|b_i y'_i| \leq \|(b_iy'_i)\|_{\ell^\infty} < \infty$.  It follows that
\begin{align*}
    \frac{1}{L}\|(b_iy'_i)\|_{\ell^\infty} \geq \sum_{j=1}^{2i} \frac{(a_j + |\lambda_j|)/b_j}{(a_i + |\lambda_j|)/b_i} = \sum_{j=1}^{i-1} \frac{(a_j + |\lambda_j|)/b_j}{(a_i + |\lambda_j|)/b_i} + \sum_{j=i}^{2i} \frac{(a_j + |\lambda_j|)/b_j}{(a_i + |\lambda_j|)/b_i}.
\end{align*}
We now look at the two summations on the right hand side individually. First, considering the sum over $1\leq j < i$, we have $a_j > a_i$ because $(a_n)$ monotonically decreases. Hence,
\begin{align*}
    \frac{1}{L}\|(b_iy'_i)\|_{\ell^\infty} \geq \sum_{j=1}^{i-1} \frac{(a_j + |\lambda_j|)/b_j}{(a_i + |\lambda_j|)/b_i} > \sum_{j=1}^{i-1} \frac{(a_i + |\lambda_j|)/b_j}{(a_i + |\lambda_j|)/b_i} = \sum_{j=1}^{i-1} \frac{b_i}{b_j},
\end{align*}
which establishes the first inequality of \eqref{eqn: case 1 sums/ inequalities}. Next, for the sum over $i\leq j \leq 2i$, we have $|\lambda_j| \leq |\lambda_i|$ again due to monotonicity. It follows that
\begin{align*}
    \frac{1}{L}\|(b_iy'_i)\|_{\ell^\infty} \geq \sum_{j=i}^{2i} \frac{(a_j + |\lambda_j|)/b_j}{(a_i + |\lambda_j|)/b_i} \geq \sum_{j=i}^{2i} \frac{(a_j + |\lambda_j|)/b_j}{(a_i + |\lambda_i|)/b_i} = \sum_{j=i}^{2i} \frac{g_j}{g_i},
\end{align*}
where the last equality follows from definition \eqref{defn: g_n}. This establishes the second inequality of \eqref{eqn: case 1 sums/ inequalities}, which completes the proof of item 1.
\end{proof}

\begin{proof}[Proof of item 2.] 
Let $X=\ell^p$, where $1 \leq p < \infty$. For this case of $X$, there exists a constant $C>0$ such that
\begin{align*}
    C \geq \sum_{i=1}^\infty |b_iy'_i|^p = \sum_{i=1}^\infty\left|\sum_{j=1}^\infty \frac{(a_j + |\lambda_j|)\sigma_j x_j/b_j}{(a_i + |\lambda_j|)/b_i} \right|^p.
\end{align*}
Let $(x_n) = e_k$ for some $k\in \N$, where $e_k\in X$ denotes the $k$th unit basis vector/sequence, i.e., the sequence of all zeros except for $1$ in the $k$th entry. Under this choice of $(x_n)$,

\begin{align*}
    \left|\sum_{j=1}^\infty \frac{(a_j + |\lambda_j|)\sigma_j (e_k)_j/b_j}{(a_i + |\lambda_j|)/b_i} \right|^p = \left|\frac{(a_k + |\lambda_k|)\sigma_k/b_k}{(a_i + |\lambda_k|)/b_i} \right|^p \geq L^p\left(\frac{(a_k + |\lambda_k|)/b_k}{(a_i + |\lambda_k|)/b_i} \right)^p.
\end{align*}
It follows that for all $k\in \N$,
\begin{align*}
    \frac{C}{L^p} \geq \sum_{i=1}^\infty \left(\frac{(a_k + |\lambda_k|)/b_k}{(a_i + |\lambda_k|)/b_i} \right)^p \geq  \sum_{i=1}^{k-1} \left(\frac{(a_k + |\lambda_k|)/b_k}{(a_i + |\lambda_k|)/b_i} \right)^p + \sum_{i=k}^{2k} \left(\frac{(a_k + |\lambda_k|)/b_k}{(a_i + |\lambda_k|)/b_i} \right)^p.
\end{align*}
For notational consistency with the statement of item 2, though with abuse of index variable, we replace $k$ with $j$. Now, considering the sum over $1 \leq i < j$, since it holds here that $|\lambda_i| > |\lambda_j|$, we have
\begin{align*}
    \frac{C}{L^p} \geq\sum_{i=1}^{j-1} \left(\frac{(a_j + |\lambda_j|)/b_j}{(a_i + |\lambda_j|)/b_i}\right)^p > \sum_{i=1}^{j-1} \left( \frac{(a_j + |\lambda_j|)/b_j}{(a_i + |\lambda_i|)/b_i} \right)^p = \sum_{i=1}^{j-1} \left( \frac{g_j}{g_i} \right)^p,
\end{align*}
which establishes the first inequality of \eqref{eqn: case 2 sums/ inequalities}. Next, for the sum over $j\leq i \leq 2j$, since it holds that $a_i\leq a_j$, we have
\begin{align*}
     \frac{C}{L^p} \geq \sum_{i=j}^{2j} \left(\frac{(a_j + |\lambda_j|)/b_j}{(a_i + |\lambda_j|)/b_i} \right)^p \geq \sum_{i=j}^{2j} \left( \frac{(a_i + |\lambda_j|)/b_j}{(a_i + |\lambda_j|)/b_i}\right)^p = \sum_{i=j}^{2j} \left(\frac{b_i}{b_j} \right)^p.
\end{align*}
This establishes the second inequality of \eqref{eqn: case 2 sums/ inequalities} and, thus, completes the proof of item 2.
\end{proof}
%%%%%%%%%%%%%%%%%%%%%%%%%%%%%%%%%%%%%%%%%%%%%%%%%%%%%%%%%%%%%%%%%%%

\subsection{Proof of Proposition \ref{prop: exponential decay}}\label{subsec: proof of prop: exponential decay}

\begin{proof}[Proof.]
    We first assume that there exists a $C>0$ such that the first inequality of \eqref{eqn: sums/inequalities for exponential decay} holds for all $i\in \N$. We define the set $\mathcal{S}_i \subset \N$, for $i\in \N$, as follows:
    \begin{align*}
        \mathcal{S}_i := \{j\in \N \mid 1\leq j<i \text{ and } x_j/x_i < 2 \}.
    \end{align*}
    Note that $\mathcal{S}_i$ may be empty. Let $K_i$ be given by 
    \begin{align*}
        K_i := \inf \mathcal{S}_i.
    \end{align*}
    We show that the difference $(i - K_i)$ is uniformly bounded from above. If $\mathcal{S}_i$ is empty for some $i\in \N$, then $K_i=\infty$, so $(i-K_i)$ is trivially upper bounded by any constant. Suppose that $\mathcal{S}_i$ is not empty. By definition, $x_{K_i}/x_i < 2$. Next, for all $K_i \leq j \leq i-1$, we have $x_j/x_{K_i} \leq C$ by hypothesis. Combining these two inequalities yields
    \begin{align*}
        x_i > \frac{x_{K_i}}{2} \geq \frac{x_j}{2C}, \quad \text{for } K_i \leq j \leq i-1.
    \end{align*}
    It follows that
    \begin{align*}
        C\geq \sum_{j=1}^{i-1} \frac{x_i}{x_j} \geq \sum_{j = K_i}^{i-1} \frac{x_i}{x_j} > \sum_{j=K_i}^{i-1} \frac{1}{2 C} = \frac{1}{2 C} (i - K_i),
    \end{align*}
    which that implies $(i - K_i)$ is uniformly bounded from above by $2C^2$. For convenience, we define $\delta:= \lceil2C^2 + 1 \rceil$, where $\lceil \cdot \rceil$ denotes the ceiling function. By definition, it is guaranteed that $x_{n+\delta}/x_n \leq 1/2$ for all $n\in \N$. Next, given $m\geq n$, there exist integers $k\geq 0$ and $0 \leq s \leq \delta -1$ such that $m-n = k\delta + s$. Then, we have
    \begin{align*}
        \frac{x_m}{x_n} = \frac{x_{n + k\delta + s}}{x_n} = \frac{x_{n+s}}{x_n} \cdot \frac{x_{n+k\delta + s}}{x_{n+s}} \leq C \left(\frac{1}{2} \right)^k.
    \end{align*}
    Substituting in $k=(m-n-s)/\delta$,
    \begin{align*}
        \left(\frac{1}{2} \right)^{\frac{m-n-s}{\delta}} = \left( 2^{-1/\delta}\right)^{m-n}2^{s/\delta} \leq \left( 2^{-1/\delta}\right)^{m-n}2^{(\delta - 1)/\delta} \leq \left( 2^{-1/\delta}\right)^{m-n}2^\delta.
    \end{align*}
    Hence, we have shown that $x_m/x_n \leq \alpha r^{m-n}$, where $\alpha = C2^\delta$ and $r = 2^{-1/\delta}$. This completes the proof given that the first inequality of \eqref{eqn: sums/inequalities for exponential decay} holds.

    \medskip
    \noindent Now, let us assume that the second inequality of \eqref{eqn: sums/inequalities for exponential decay} holds for some $C>0$ and for all $i\in \N$. For each $i\in \N$, we define the set $\mathcal{S}'_i \subset \N$ as
    \begin{align*}
        \mathcal{S}'_i := \{j\in \N \mid i \leq j \leq 2i \text{ and } x_j/x_i > 1/2\},
    \end{align*}
    and the index $K_i'$ as 
    \begin{align*}
        K_i' := \sup \mathcal{S}'_i.
    \end{align*}
    Note that $\mathcal{S}'_i$ is never empty since $i\in \mathcal{S}'_i$ always holds. Trivially, if $\mathcal{S}'_i = \{i\}$, then $K_i' - i = 0$. Suppose that $|\mathcal{S}'_i|\geq 2$. By definition, we have $x_{K_i'} > x_i/2$. By assumption, $x_{K_i'}/x_j \leq C$ for all $i \leq j \leq K_i'$. It follows that
    \begin{align*}
        x_j \geq \frac{x_{K_i'}}{C} > \frac{x_i}{2C}, \quad \text{for } i \leq j \leq K_i'.
    \end{align*}
    Therefore, we have
    \begin{align*}
        C\geq \sum_{j=i}^{2i} \frac{x_j}{x_i} \geq \sum_{j=i}^{K_i'} \frac{x_j}{x_i} > \sum_{j=i}^{K_i'} \frac{1}{2C} = \frac{1}{2C}(K_i' - i + 1),
    \end{align*}
    implying that $(K_i' - i)$ is uniformly bounded above by $2C^2 -1$. Note that $2C^2 - 1 > 0$ because
    \begin{align*}
        C \geq \sum_{j = i}^{2i} \frac{x_j}{x_i} = 1 + \sum_{j =i+1}^{2i} \frac{x_j}{x_i} > 1.
    \end{align*}
    We define $\delta' := \lceil 2C^2 \rceil$. By definition, $x_{n + \delta'}/x_n \leq 1/2$ for all $n\in \N$. Now, for $m \geq n$, we have $m-n = k\delta' + s$ for some integers $k\geq 0$ and $0\leq  s \leq \delta' - 1$. Using the same arguments as in the first case, we have
    \begin{align*}
        \frac{x_m}{x_n} \leq \frac{x_{n+s}}{x_n} \cdot \left(2^{-1/\delta'} \right)^{m-n}2^{\delta'}.
    \end{align*}
    Different from the first case, it is not guaranteed that $x_{n+s}/x_n \leq C$ since $n+s$ could be greater than $2n$. Consider the decomposition $n+s = qn + l$ for some integers $q\geq 0$ and $0\leq l \leq n-1$. We have
    \begin{align*}
        \frac{x_{n+s}}{x_n} = \frac{x_{qn + l}}{x_n} = \frac{x_{qn + l}}{x_{qn}}\cdot \frac{x_{qn}}{x_n} \leq C \prod_{j=1}^{q-1} \frac{x_{(j+1)n}}{x_{jn}} \leq C^q.
    \end{align*}
    Substituting $q = (n+s - l)/n$, we get
    \begin{align*}
        C^q = C^{\frac{n+s-l}{n}} = C \cdot C^{s/n} \cdot (1/C)^{l/n} \leq C \cdot C^{s/n},
    \end{align*}
    where the inequality follows from the fact that $C\geq 1$. Since $s \leq \delta' -1$ and $n\geq 1$, we have
    \begin{align*}
        C^q \leq C\cdot C^{s/n} \leq C\cdot C^{\delta'-1} = C^{\delta'}.
    \end{align*}
    Putting everything together, we have shown that $x_m/x_n \leq \alpha r^{m-n}$, where $\alpha = (2C)^{\delta'}$ and $r =2^{-1/\delta'}$. This completes the proof of the proposition.
\end{proof}

%%%%%%%%%%%%%%%% PROOFS of THEOREM 2 %%%%%%%%%%%%%%%%
\section{Proof of Theorem \ref{thm: suff cond}}\label{sec: suff thm proof}
We begin by assuming that the hypothesis of Theorem \ref{thm: suff cond} is satisfied. As done in the previous section, for ease of presentation, we assume that $(b_n)$ is positive and note that the subsequent proofs do not change if $b_n$ is replaced with $|b_n|$. We now unpack some immediate consequences of the hypothesis. First, note that \ref{cond: dynamic ratio bounds} forces $(a_n)$ and $(|\lambda_n|)$ to decay at the same rate. Thus, combining with \ref{cond: a_n and lambda_n exponential decay}, there exists $\rho \in (0,1)$ such that $(a_n)$ and $(|\lambda_n|)$ both $(1, \rho)$-exponentially decay. Next, due to \ref{cond: b_n exponential decay} and \ref{cond: a_n/b_n exponential decay}, there exist $\alpha,\beta>0$ and $\mu,\nu\in (0,1)$ such that 
\begin{align}
    \frac{a_m/b_m}{a_n/b_n} \leq \alpha \mu^{m-n} \quad \text{and} \quad \frac{b_m}{b_n}\leq \beta \nu^{m-n}, \quad \text{for all } m\geq n.
    \label{defn: nu and mu}
\end{align}
Let $\omega := (\omega_n)$ and $\gamma^{-1} := (1/\gamma_n)$, where $(\gamma_n)$, $(\omega_n)$ are the sequences satisfying \ref{cond: dynamic ratio bounds}. Note that $\omega$, $\gamma^{-1}\in \ell^\infty$ by hypothesis. Since $|\lambda_n|/a_n \leq \|\omega\|_{\ell^\infty}$ and $a_n/|\lambda_n| \leq \|\gamma^{-1}\|_{\ell^\infty}$ for all $n\in \N$ by \ref{cond: dynamic ratio bounds}, it follows that for all $m \geq n$,
\begin{align}
    \frac{|\lambda_m|/b_m}{|\lambda_n|/b_n} \leq \alpha' \mu^{m - n}, \quad \text{where } \alpha' := \alpha \|\gamma^{-1}\|_{\ell^\infty} \|\omega\|_{\ell^\infty};
    \label{defn: alpha prime}
\end{align}
that is, $(|\lambda_n|/b_n)$ is $(\alpha', \mu)$-exponentially decaying.

Our first task is to prove that $k(\lambda)\in X^*$, which we accomplish by showing that $k(\lambda) \in \ell^1$ since $\ell^1 \subseteq X^*$ holds for all $X$. To this end, consider the following proposition.

\begin{proposition}
    It holds that $\pi(\lambda,a) \in \ell^\infty$ and $\pi(a, \lambda) \in \ell^\infty$.
    \label{Prop: pi(lambda, a) is uniformly bounded}
\end{proposition}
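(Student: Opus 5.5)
The plan is to bound $|\pi_n(\lambda,a)|$ and $|\pi_n(a,\lambda)|$ by a constant independent of $n$. To do this, I would split each factor into a ``symmetric'' Cauchy-type part, controlled by the exponential decay in \ref{cond: a_n and lambda_n exponential decay}, and a perturbation part, controlled by the summability in \ref{cond: dynamic ratio bounds}.

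\emph{Reduction to $|\pi_n(\lambda,a)|$.} Since $\lambda_m<0$, we have
\begin{align*}
|\pi_n(\lambda,a)| = \prod_{m\neq n}\frac{a_n+|\lambda_m|}{|a_n-a_m|}.
\end{align*}
Write $|\lambda_m| = a_m(1+\varepsilon_m)$. By \ref{cond: dynamic ratio bounds}, $\gamma_m-1\le\varepsilon_m\le\omega_m-1$. Since $1-\gamma_m\le 1/\gamma_m-1$ (because $\gamma_m\le 1$), we get $|\varepsilon_m|\le (\omega_m-1)+(1/\gamma_m-1)$, so $(\varepsilon_m)\in\ell^1$. Each factor then splits as
\begin{align*}
\frac{a_n+|\lambda_m|}{|a_n-a_m|} = \frac{a_n+a_m}{|a_n-a_m|}\left(1+\frac{a_m\varepsilon_m}{a_n+a_m}\right).
\end{align*}
The second factor is at most $1+|\varepsilon_m|$. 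Its product over $m$ is therefore bounded by $\exp(\|\varepsilon\|_{\ell^1})$, uniformly in $n$.

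\emph{Bounding the symmetric part.} It remains to bound $\prod_{m\neq n}(a_n+a_m)/|a_n-a_m|$ uniformly in $n$. As noted just before the proposition, $(a_n)$ is $(1,\rho)$-exponentially decaying and can be taken strictly decreasing.
\begin{itemize}
\item For $m>n$, set $s=a_m/a_n\le\rho^{m-n}$. The factor is $(1+s)/(1-s)\le(1+\rho^{k})/(1-\rho^{k})$ with $k=m-n$.
\item For $m<n$, set $t=a_m/a_n\ge\rho^{-k}$ with $k=n-m$. The factor is $(t+1)/(t-1)\le(\rho^{-k}+1)/(\rho^{-k}-1)=(1+\rho^{k})/(1-\rho^{k})$.
\end{itemize}
Hence the symmetric product is at most $\left(\prod_{k\ge1}\frac{1+\rho^k}{1-\rho^k}\right)^2$. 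This is finite because $\frac{1+\rho^k}{1-\rho^k}\le \exp\!\big(2\rho^k/(1-\rho)\big)$ and $\sum_k\rho^k<\infty$. The same domination also shows that the infinite products defining $\pi_n$ converge, so $\pi_n(\lambda,a)$ is well defined. Combining the two bounds gives $\pi(\lambda,a)\in\ell^\infty$.

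\emph{The case $\pi(a,\lambda)$.} By symmetry,
\begin{align*}
|\pi_n(a,\lambda)|=\prod_{m\neq n}\frac{|\lambda_n|+a_m}{\big||\lambda_n|-|\lambda_m|\big|}.
\end{align*}
Write $a_m=|\lambda_m|(1+\delta_m)$. Then $1/\omega_m-1\le\delta_m\le 1/\gamma_m-1$, and since $1-1/\omega_m\le\omega_m-1$, again $(\delta_m)\in\ell^1$. The identical argument now applies, using the $(1,\rho)$-exponential decay of $(|\lambda_n|)$ in place of that of $(a_n)$.

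\emph{Main obstacle.} The step needing most care is the uniformity in $n$ of the symmetric Cauchy-type product, in particular the near-diagonal factors $m=n\pm1$. This is exactly where $\alpha=1$ in \ref{cond: a_n and lambda_n exponential decay} matters: it yields $a_{n+1}/a_n\le\rho<1$ for every $n$, so no denominator $|a_n-a_m|$ becomes small relative to $a_n$. The perturbation part is routine once $\varepsilon,\delta\in\ell^1$ is extracted from \ref{cond: dynamic ratio bounds}.
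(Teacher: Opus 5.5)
Your proof is correct, but it is organized differently from the paper's. You factor each term as $\frac{a_n+a_m}{|a_n-a_m|}\bigl(1+\frac{a_m\varepsilon_m}{a_n+a_m}\bigr)$. This dominates $|\pi_n(\lambda,a)|$ by two pieces. The first is the product for the model assignment $\lambda=-a$, bounded by $\bigl(\prod_{k\ge1}\frac{1+\rho^k}{1-\rho^k}\bigr)^2$ using only the $(1,\rho)$-decay. The second is a perturbation product bounded by $\exp(\|\varepsilon\|_{\ell^1})$ using only summability.

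The paper instead takes logarithms and bounds each factor directly by $1+x_m$, with $\sum_m x_m$ bounded uniformly in $n$. For $m<n$ it writes the factor as $1+\frac{2a_n/a_m+|\lambda_m|/a_m-1}{1-a_n/a_m}$ and uses $|\lambda_m|/a_m\le\omega_m$ with $(\omega_m-1)\in\ell^1$. For $m>n$ it uses only $|\lambda_m|\le\|\omega\|_{\ell^\infty}a_m$, since that factor is already $1+O(\rho^{m-n})$.

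Your split cleanly separates the roles of \ref{cond: a_n and lambda_n exponential decay}, which gives uniform separation of the Cauchy-type denominators, and \ref{cond: dynamic ratio bounds}, which controls the perturbation. It also yields an explicit constant. The paper's bookkeeping shows a bit more. Summability is needed only for $m<n$, where the factor tends to $|\lambda_m|/a_m$ as $n\to\infty$. Moreover, $\pi(\lambda,a)\in\ell^\infty$ needs only the upper sequence $\omega$, and $\pi(a,\lambda)\in\ell^\infty$ only $\gamma$, whereas you use both for each. You can recover this one-sidedness at no cost: the perturbation factor is below $1$ when $\varepsilon_m<0$, so bound it by $1+\max(\varepsilon_m,0)\le\omega_m$ rather than $1+|\varepsilon_m|$.

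Two minor points. The inequality $1-\gamma_m\le 1/\gamma_m-1$ also uses $\gamma_m>0$, which follows from positivity of $1/\gamma_m-1$. Convergence of the infinite product does follow from your domination, but the reason is that every factor with $m>n$ exceeds $1$, so the partial products are monotone and bounded.
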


\noindent  We establish below that $k(\lambda) \in \ell^1$ is an immediate consequence of Proposition \ref{Prop: pi(lambda, a) is uniformly bounded}. We prove Proposition \ref{Prop: pi(lambda, a) is uniformly bounded} in Subsection \S \ref{subsec: proof that pi(lambda, a) is uniformly bounded}.

\begin{corollary}
    It holds that $k(\lambda) \in \ell^1$.
\end{corollary}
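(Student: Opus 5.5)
We will show $k(\lambda)\in\ell^1$ directly from the definition \eqref{defn: feedback k}, using Proposition \ref{Prop: pi(lambda, a) is uniformly bounded} to control the product factor. Since $b_n>0$ (after the reduction $b_n\mapsto|b_n|$), each term satisfies
\begin{align*}
    |k_n(\lambda)| = \frac{a_n - \lambda_n}{b_n}\,|\pi_n(\lambda,a)| = \frac{a_n + |\lambda_n|}{b_n}\,|\pi_n(\lambda,a)| \leq \|\pi(\lambda,a)\|_{\ell^\infty}\left(\frac{a_n}{b_n} + \frac{|\lambda_n|}{b_n}\right).
\end{align*}
The first step is therefore to invoke Proposition \ref{Prop: pi(lambda, a) is uniformly bounded}. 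It gives $\pi(\lambda,a)\in\ell^\infty$, so the product factor is replaced by a uniform constant.

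The second step is to sum the two remaining sequences. By \eqref{defn: nu and mu} with $n=1$, we have $a_m/b_m \leq \alpha (a_1/b_1)\mu^{m-1}$ for all $m\geq 1$. By \eqref{defn: alpha prime} with $n=1$, we have $|\lambda_m|/b_m \leq \alpha'(|\lambda_1|/b_1)\mu^{m-1}$. Both are dominated by geometric series with ratio $\mu\in(0,1)$, hence both are summable, and
\begin{align*}
    \|k(\lambda)\|_{\ell^1} \leq \|\pi(\lambda,a)\|_{\ell^\infty}\,\frac{\alpha a_1/b_1 + \alpha'|\lambda_1|/b_1}{1-\mu} < \infty.
\end{align*}
Equivalently, $(g_n)$ is bounded by a geometric sequence, which also follows from condition \ref{cond: g_n exponential decay}.

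Finally, $\ell^1\subseteq X^*$ via the pairing \eqref{eqn: k acting as a sum} for every choice of $X$ considered, so $k(\lambda)\in X^*$. There is no real obstacle here: all the difficulty sits in Proposition \ref{Prop: pi(lambda, a) is uniformly bounded}, which we are allowed to assume. The only point needing care is that the exponential decay is applied with base index $n=1$, so that the constants $\alpha,\alpha'$ are uniform in $m$.
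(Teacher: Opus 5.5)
Your proof is correct and follows the paper's argument: you bound $|\pi_n(\lambda,a)|$ uniformly via Proposition~\ref{Prop: pi(lambda, a) is uniformly bounded}, then dominate $(a_n+|\lambda_n|)/b_n$ by a geometric series using the decay estimates with base index $n=1$. The only cosmetic difference is that the paper absorbs the $|\lambda_n|$ term through $|\lambda_n|\le \|\omega\|_{\ell^\infty} a_n$ from \ref{cond: dynamic ratio bounds} instead of invoking \eqref{defn: alpha prime} separately; your aside via \ref{cond: g_n exponential decay} is also valid, since $|k_n(\lambda)| = g_n|\pi_n(\lambda,a)|$.
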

\begin{proof}[Proof.]
    By definition \eqref{defn: feedback k}, we have for all $n\in \N$,
    \begin{align*}
        |k_n(\lambda)| = \frac{a_n + |\lambda_n|}{b_n}|\pi_n(\lambda,a)|
    \end{align*}
    Hence,
    \begin{align*}
        \|k(\lambda)\|_{\ell^1} = \sum_{n = 1}^\infty |k_n(\lambda)| 
        &\leq \|\pi(\lambda,a)\|_{\ell^\infty} \sum_{n=1}^\infty \frac{a_n + |\lambda_n|}{b_n} \\
        &\leq \left( \|\omega\|_{\ell^\infty} + 1 \right)\|\pi(\lambda,a)\|_{\ell^\infty}  \sum_{n=1}^\infty \frac{a_n}{b_n}\\
        &\leq \frac{\alpha a_1}{b_1} \left( \|\omega\|_{\ell^\infty} + 1\right)\|\pi(\lambda,a)\|_{\ell^\infty}\sum_{n=1}^\infty \mu^{n-1}\\ &= \frac{\alpha a_1 \left(\|\omega\|_{\ell^\infty} + 1\right)\|\pi(\lambda,a)\|_{\ell^\infty}}{b_1(1-\mu)},
    \end{align*}
    where the first inequality follows from Proposition \ref{Prop: pi(lambda, a) is uniformly bounded}, the second inequality from \ref{cond: dynamic ratio bounds}, and the last inequality from \eqref{defn: nu and mu}. This completes the proof.
\end{proof}

Next, we show that $T_{k(\lambda)}$ is diagonalizable, which will require more effort. In accordance with Theorem \ref{thm: suff cond}, for the remainder of the section, we only consider the cases $X=\ell^p$ for $1\leq p \leq \infty$ and $X=c_0$. To show that $T_{k(\lambda)}$ is diagonalizable, we prove that $\tilde T$ is diagonalizable, where $\tilde T$ is the transformation of $T_{k(\lambda)}$ given by \eqref{defn: T tilde}. Recall the definition of $\tilde P$ and $\tilde Q$ in \eqref{defn: P and Q}, and recall that the columns of $\tilde P$ and rows of $\tilde Q$ represent the right and left eigenvectors of $\tilde T$, respectively. Setting the scaling factors to be $\sigma_n = \pi_n(a, \lambda)$ and $\tau_n = 1$ for $n\in \N$, we have
\begin{align}
\begin{split}
    \tilde P &= \left[\tilde P_{ij} := \frac{(a_j - \lambda_j)\pi_j(a, \lambda)}{a_i - \lambda_j} \right]_{1 \leq i,j < \infty}, \quad 
    \tilde Q = \left[\tilde Q_{ij} := \frac{(\lambda_j - a_j)\pi_j(\lambda,a)}{\lambda_i - a_j} \right]_{1 \leq i,j < \infty} .
\end{split}
\label{defn: new P and Q}
\end{align}
Note that $\tilde P$ and $\tilde Q$, as defined above, are symmetric in the sense that $\tilde Q$ is obtained from $\tilde P$ by swapping the roles of $a$ and $\lambda$.

Showing that $\tilde P$ and $\tilde Q$ of \eqref{defn: new P and Q} diagonalize $\tilde T$ takes two steps: first, we show that $\tilde P$ and $\tilde Q$ are well-defined, bounded operators from $Y$ to $Y$; second, we show that $\tilde P$ and $\tilde Q$ are inverses of each other. Beginning with the first step, we introduce the following constants:
\begin{align}
    \kappa :=\frac{(1+\|\gamma^{-1}\|_{\ell^\infty})\beta}{1-\nu} + \frac{(1 + \|\omega\|_{\ell^\infty})\alpha}{1-\mu}, \quad    \eta &:= \frac{(1 + \|\omega\|_{\ell^\infty})\beta}{1-\nu} + \frac{(1 + \|\gamma^{-1}\|_{\ell^\infty})\alpha'}{1-\mu},
    \label{defn: eta and kappa}
\end{align}
where $\|\gamma^{-1}\|_{\ell^\infty}$, $\|\omega\|_{\ell^\infty}$ come from \ref{cond: dynamic ratio bounds}, $\alpha, \beta>0$ and $\mu, \nu\in (0,1)$ satisfy \eqref{defn: nu and mu}, and $\alpha'$ is defined as in \eqref{defn: alpha prime}. The following proposition plays an important role in achieving the first step.

% option 
\begin{proposition}
The following hold:
\begin{enumerate}
    \item For all $i\in \N$,
        \begin{align}
            \sum_{j=1}^\infty \frac{(a_j + |\lambda_j|)/b_j}{(a_i + |\lambda_j|)/b_i} < \kappa \quad \text{and} \quad 
            \sum_{j=1}^\infty \frac{(|\lambda_j| + a_j)/b_j}{(|\lambda_i| + a_j)/b_i} < \eta. \label{eqn: kappa and eta bound (j sum)}
        \end{align}

    \item For all $j \in \N$,
        \begin{align}
            \sum_{i=1}^\infty \frac{(a_j + |\lambda_j|)/b_j}{(a_i + |\lambda_j|)/b_i} < \kappa \quad \text{and} \quad
            \sum_{i=1}^\infty \frac{(|\lambda_j| + a_j)/b_j}{(|\lambda_i| + a_j)/b_i} < \eta.
            \label{eqn: kappa and eta bound (i sum)}
        \end{align}
\end{enumerate}
\label{prop: kappa and eta bounds}
\end{proposition}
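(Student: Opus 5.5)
The plan is to split each of the four sums at the diagonal index, $j<i$ versus $j\geq i$ (respectively $i<j$ versus $i\ge j$). On each piece we drop one of the two terms in the denominator, so that every summand is a ratio of one of the exponentially decaying sequences $(b_n)$, $(a_n/b_n)$, or $(|\lambda_n|/b_n)$. Each piece is then dominated by a geometric series with ratio $\nu$ or $\mu$. Throughout we use the monotone ordering of $(a_n)$ and $(|\lambda_n|)$ fixed earlier, the bounds $|\lambda_n|\le \|\omega\|_{\ell^\infty} a_n$ and $a_n\le \|\gamma^{-1}\|_{\ell^\infty}|\lambda_n|$ from \ref{cond: dynamic ratio bounds}, and the decay estimates \eqref{defn: nu and mu} and \eqref{defn: alpha prime}.

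Consider first the first sum in item 1, with $i$ fixed.

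For $j<i$ (so $b_i/b_j\le \beta\nu^{i-j}$), bound the denominator from below by $|\lambda_j|$ and use $a_j\le\|\gamma^{-1}\|_{\ell^\infty}|\lambda_j|$. This gives
\[
\frac{(a_j+|\lambda_j|)b_i}{(a_i+|\lambda_j|)b_j}\le (1+\|\gamma^{-1}\|_{\ell^\infty})\frac{b_i}{b_j}\le (1+\|\gamma^{-1}\|_{\ell^\infty})\beta\nu^{i-j}.
\]
Summing over $j<i$ is bounded by $(1+\|\gamma^{-1}\|_{\ell^\infty})\beta/(1-\nu)$, at worst with the geometric sum starting at exponent $1$.

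For $j\ge i$, bound the denominator from below by $a_i$ and use $|\lambda_j|\le\|\omega\|_{\ell^\infty}a_j$. This gives
\[
(1+\|\omega\|_{\ell^\infty})\frac{a_j/b_j}{a_i/b_i}\le (1+\|\omega\|_{\ell^\infty})\alpha\mu^{j-i},
\]
whose sum is at most $(1+\|\omega\|_{\ell^\infty})\alpha/(1-\mu)$.

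Adding the two pieces gives exactly $\kappa$. Strictness follows because some inequality used is strict, for instance the first geometric sum starts at exponent $1$.

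The second sum in item 1 is handled the same way with the roles of $a$ and $\lambda$ swapped. For $j<i$, bound the denominator by $a_j$ and use $|\lambda_j|\le\|\omega\|_{\ell^\infty}a_j$, together with the decay of $(b_n)$. For $j\ge i$, bound the denominator by $|\lambda_i|$ and use $a_j\le \|\gamma^{-1}\|_{\ell^\infty}|\lambda_j|$, together with the $(\alpha',\mu)$-decay of $(|\lambda_n|/b_n)$. This yields $\eta$.

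For item 2 we fix $j$ and sum over $i$, so the split is reversed.

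In the first sum, for $i\le j$, bound the denominator by $a_i$ and use the decay of $(a_n/b_n)$ in the form $(a_j/b_j)/(a_i/b_i)\le\alpha\mu^{j-i}$. For $i>j$, bound it by $|\lambda_j|$ and use $b_i/b_j\le\beta\nu^{i-j}$. The constants are the same, so the sum is again below $\kappa$.

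In the second sum, for $i\le j$, bound the denominator by $|\lambda_i|$ and use the decay of $(|\lambda_n|/b_n)$. For $i>j$, bound it by $a_j$ and use the decay of $(b_n)$. This gives $\eta$.

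The main (though mild) obstacle is choosing, in each of the eight half-sums, which denominator term to keep. The choice has to produce precisely the decaying ratio in the correct index direction, and the prefactor $(1+\|\omega\|_{\ell^\infty})$ or $(1+\|\gamma^{-1}\|_{\ell^\infty})$ has to match the definitions of $\kappa$ and $\eta$ in \eqref{defn: eta and kappa}. If the constants turn out not to match exactly in item 2, I would simply swap which ratio bound is used on each half; any finite constant would suffice for the later boundedness argument.
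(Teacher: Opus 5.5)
Your proposal is correct and is essentially the paper's proof. You split at the diagonal, drop one denominator term so that each summand becomes a ratio of $(b_n)$, $(a_n/b_n)$ or $(|\lambda_n|/b_n)$, and dominate by geometric series in $\nu$ and $\mu$; the constants match $\kappa$ and $\eta$ exactly. The differences are cosmetic: in item 2 you place the diagonal term in the $\mu$-piece rather than the $\nu$-piece, and you write out the $\eta$ bounds explicitly (via the $(\alpha',\mu)$-decay of $(|\lambda_n|/b_n)$), where the paper only invokes the $a\leftrightarrow\lambda$ symmetry.
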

\noindent Equipped with Propositions \ref{prop: kappa and eta bounds} and \ref{Prop: pi(lambda, a) is uniformly bounded}, we are able to establish that $\tilde P$ and $\tilde Q$ are well-defined, bounded operators from $Y$ to $Y$.

\begin{proposition}
    The operators $\tilde P$ and $\tilde Q$, as defined in \eqref{defn: new P and Q}, map $Y$ to $Y$ and, moreover, satisfy
    \begin{align*}
        \| \tilde P\|_{\mathcal{B}(Y)} \leq \kappa \|\pi(a,\lambda)\|_{\ell^\infty} \quad \text{and} \quad \| \tilde Q \|_{\mathcal{B}(Y)} \leq \eta \|\pi(\lambda,a)\|_{\ell^\infty},
    \end{align*}
    where $\kappa$ and $\eta$ are given by \eqref{defn: eta and kappa}.
    \label{prop: P and Q are bounded}
\end{proposition}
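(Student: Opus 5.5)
The plan is to bound $\tilde P$ and $\tilde Q$ with a Schur-test argument, using the weights in $\|\cdot\|_Y$ to move everything to $X$. Since $\|y\|_Y=\|(b_ny_n)\|_X$, the operator $\tilde P$ on $Y$ is isometrically equivalent to $B\tilde P B^{-1}$ on $X$. The latter has matrix entries
\begin{align*}
    \hat P_{ij} := \frac{b_i}{b_j}\tilde P_{ij} = \frac{(a_j+|\lambda_j|)\pi_j(a,\lambda)/b_j}{(a_i+|\lambda_j|)/b_i},
\end{align*}
where I use $\lambda_j<0$ and $b_n>0$. It therefore suffices to show that $B\tilde PB^{-1}$ maps $X$ to $X$ with norm at most $\kappa\|\pi(a,\lambda)\|_{\ell^\infty}$. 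Taking absolute values and invoking Proposition \ref{Prop: pi(lambda, a) is uniformly bounded} gives the entrywise bound
\begin{align*}
    |\hat P_{ij}|\leq \|\pi(a,\lambda)\|_{\ell^\infty}\,\frac{(a_j+|\lambda_j|)/b_j}{(a_i+|\lambda_j|)/b_i}.
\end{align*}
The row sums $\sum_j$ and column sums $\sum_i$ of the right-hand kernel are both bounded by $\kappa$, by items 1 and 2 of Proposition \ref{prop: kappa and eta bounds}. The same reasoning applies to $\tilde Q$: its conjugated entries are $\frac{(|\lambda_j|+a_j)\pi_j(\lambda,a)/b_j}{(|\lambda_i|+a_j)/b_i}$, and their row and column sums are bounded by $\eta\|\pi(\lambda,a)\|_{\ell^\infty}$ via the second inequalities in \eqref{eqn: kappa and eta bound (j sum)} and \eqref{eqn: kappa and eta bound (i sum)}.

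Next I would apply the Schur test case by case.
\begin{enumerate}
    \item For $X=\ell^\infty$, the bounded row sums give $|(\hat Px)_i|\leq \sup_i\sum_j|\hat P_{ij}|\,\|x\|_{\ell^\infty}$. Absolute convergence of each row series also shows that $\hat P x$ is well defined.
    \item For $X=\ell^1$, the bounded column sums give $\sum_i|(\hat Px)_i|\leq \sum_j |x_j|\sum_i|\hat P_{ij}|$, using Tonelli to swap the sums.
    \item For $1<p<\infty$, Riesz–Thorin interpolation between these two endpoints, or directly Schur's test with constant weights, gives the bound $(\sup_i\sum_j|\hat P_{ij}|)^{1/q}(\sup_j\sum_i|\hat P_{ij}|)^{1/p}\leq \kappa\|\pi(a,\lambda)\|_{\ell^\infty}$. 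Concretely, apply H\"older to $\sum_j|\hat P_{ij}|^{1/q}\,|\hat P_{ij}|^{1/p}|x_j|$, then sum over $i$ and swap the order of summation.
\end{enumerate}

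The remaining case is $X=c_0$, where I must also check that $\hat Px\in c_0$ and not merely $\ell^\infty$. I will use a density argument. For finitely supported $x$, $(\hat Px)_i$ is a finite combination of the columns $j\mapsto \hat P_{ij}$ with $j$ fixed. Each such column lies in $\ell^1\subset c_0$ in $i$, because its sum over $i$ is bounded by $\kappa\|\pi(a,\lambda)\|_{\ell^\infty}$. Since $\hat P$ is bounded in the $\ell^\infty$ norm and finitely supported sequences are dense in $c_0$, and $c_0$ is closed in $\ell^\infty$, it follows that $\hat P$ maps $c_0$ into $c_0$. The same argument handles $\tilde Q$.

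Given Propositions \ref{prop: kappa and eta bounds} and \ref{Prop: pi(lambda, a) is uniformly bounded}, none of these steps is hard. The main point needing care is the $c_0$ case just described, together with making sure the series defining $\tilde Py$ converges absolutely for every $y\in Y$, so that the operator is well defined before its norm is bounded. Both issues follow from the row-sum bound combined with $x\in\ell^\infty$.
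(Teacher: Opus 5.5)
Your proposal is correct and follows the paper's proof. You conjugate by $B$, bound the kernel entrywise using Proposition~\ref{Prop: pi(lambda, a) is uniformly bounded}, and feed the row- and column-sum bounds of Proposition~\ref{prop: kappa and eta bounds} into a Schur/H\"older estimate; the paper's $\phi_i,\psi_i$ splitting is exactly that constant-weight Schur test. Your density argument for $c_0$ is a repackaging of the paper's explicit $\epsilon$--$N$ head/tail estimate: you use column summability where the paper notes that each fixed-column entry vanishes as $i\to\infty$ because $b_i\to 0$.
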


% \begin{proposition}
%     The operators $\tilde P$ and $\tilde Q$, as defined in \eqref{defn: new P and Q}, map $Y$ to $Y$ and, moreover, satisfy $\| \tilde P\|_{\mathcal{B}(Y)} \leq \kappa \|\pi(a,\lambda)\|_{\ell^\infty}$ and $\| \tilde Q \|_{\mathcal{B}(Y)} \leq \eta \|\pi(\lambda,a)\|_{\ell^\infty}$, where $\kappa$ and $\eta$ are given by \eqref{defn: eta and kappa}.
%     \label{prop: P and Q are bounded}
% \end{proposition}

With the first step established, all that is left to do is show that $\tilde P$ and $\tilde Q$ are inverses of each other, which is accomplished by the following proposition.
\begin{proposition}
    The operators $\tilde P$ and $\tilde Q$, as defined in \eqref{defn: new P and Q}, satisfy $\tilde P \tilde Q = \tilde Q \tilde P = I$.
    \label{prop: P and Q are inverses}
\end{proposition}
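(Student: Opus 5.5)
The plan is to verify $\tilde Q\tilde P = I$ and $\tilde P\tilde Q = I$ entrywise, and then pass from matrix entries to operators using absolute summability. As in Section~\ref{sec: proof of nec cond}, we order $(a_n)$ and $(|\lambda_n|)$ to be strictly decreasing; \ref{cond: dynamic ratio bounds} keeps them pairwise distinct, and all $a_i - \lambda_j = a_i + |\lambda_j|$ are nonzero. By \eqref{defn: new P and Q},
\begin{align*}
(\tilde Q\tilde P)_{ik} = \sum_{j=1}^\infty \frac{(\lambda_j - a_j)\pi_j(\lambda,a)}{\lambda_i - a_j}\cdot\frac{(a_k-\lambda_k)\pi_k(a,\lambda)}{a_j - \lambda_k}.
\end{align*}
The goal is to show this equals $\delta_{ik}$, and likewise $(\tilde P\tilde Q)_{ik}=\delta_{ik}$.

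\textbf{Step 1: finite-$N$ identity.} Fix $i,k$ and take $N\geq \max(i,k)$. The Cauchy matrix $G$ of \eqref{eqn: Cauchy matrix} and its inverse \eqref{eqn: Cauchy inverse} satisfy $G^{-1}G = GG^{-1} = I_N$. Reading off the $(i,k)$ entries gives
\begin{align*}
\sum_{j=1}^N \frac{(\lambda_i-a_i)(a_j-\lambda_j)\pi_i(a,\lambda;N)\pi_j(\lambda,a;N)}{(\lambda_i - a_j)(a_j-\lambda_k)} = \delta_{ik},
\end{align*}
together with the analogous identity from $GG^{-1}=I_N$. After regrouping the scalar prefactors, these are exactly the truncated versions of $(\tilde Q\tilde P)_{ik}$ and $(\tilde P\tilde Q)_{ik}$. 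Here $\pi(\cdot,\cdot;N)$ replaces $\pi$, and the roles of the indices $i,k$ in the $\pi$-factors are arranged by the transpose symmetry of the Cauchy formula. Checking this bookkeeping is routine but should be done carefully.

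\textbf{Step 2: pass $N\to\infty$ by dominated convergence.} For fixed $j$, we have $\pi_j(\lambda,a;N)\to\pi_j(\lambda,a)$ and $\pi_j(a,\lambda;N)\to\pi_j(a,\lambda)$. The second limit exists by Proposition~\ref{Prop: pi(lambda, a) is uniformly bounded} and the convergence of the infinite products. For domination we use the monotonicity argument from the proof of Lemma~\ref{lem: convergence of Q(N)}. There, $\pi_j(\lambda,a;N) = r_j(N)\pi_j(\lambda,a)$ with $|r_j(N)|\leq 1$, since each tail factor $(1-a_m/a_j)/(1-\lambda_m/a_j)$ for $m>N\ge j$ has modulus at most $1$. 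The symmetric statement holds for $\pi_j(a,\lambda;N)$. Hence, by Proposition~\ref{Prop: pi(lambda, a) is uniformly bounded}, the $j$-th summand is bounded by a constant times
\begin{align*}
\frac{a_j+|\lambda_j|}{(|\lambda_i|+a_j)(a_j+|\lambda_k|)} \leq \frac{1}{|\lambda_k|}\cdot\frac{a_j+|\lambda_j|}{|\lambda_i|+a_j},
\end{align*}
and $\sum_j (a_j+|\lambda_j|)$ converges by the exponential decay in \ref{cond: a_n and lambda_n exponential decay}. Dominated convergence then yields $(\tilde Q\tilde P)_{ik} = \delta_{ik}$, and the same argument gives $(\tilde P\tilde Q)_{ik}=\delta_{ik}$.

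\textbf{Step 3: from entries to operators.} For $y\in Y$ we need
\begin{align*}
(\tilde Q\tilde P y)_i = \sum_j \tilde Q_{ij}\sum_k \tilde P_{jk}y_k = \sum_k\Big(\sum_j \tilde Q_{ij}\tilde P_{jk}\Big)y_k = y_i.
\end{align*}
I expect this interchange to be the main obstacle, because it must work uniformly for all $X$, including $\ell^\infty$, where density of finitely supported sequences is unavailable. The plan is to justify it by Fubini--Tonelli, showing that
\begin{align*}
\sum_{j,k}|\tilde Q_{ij}||\tilde P_{jk}||y_k| < \infty.
\end{align*}
Write $x=(b_ny_n)\in X$. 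Since $\|x\|_{\ell^\infty}\le \|x\|_X$ for every admissible $X$, we get $|y_k|\le \|x\|_X/b_k$. Using $\sigma$-bounds from Proposition~\ref{Prop: pi(lambda, a) is uniformly bounded}, the inner sum over $k$ is at most
\begin{align*}
\|\pi(a,\lambda)\|_{\ell^\infty}\,\|x\|_X\,\frac{1}{b_j}\sum_k \frac{(a_k+|\lambda_k|)/b_k}{(a_j+|\lambda_k|)/b_j} \le \kappa\,\|\pi(a,\lambda)\|_{\ell^\infty}\,\|x\|_X/b_j
\end{align*}
by item 1 of Proposition~\ref{prop: kappa and eta bounds}. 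The remaining sum is then
\begin{align*}
\sum_j |\tilde Q_{ij}|/b_j \leq \|\pi(\lambda,a)\|_{\ell^\infty}\,\frac{1}{b_i}\sum_j \frac{(|\lambda_j|+a_j)/b_j}{(|\lambda_i|+a_j)/b_i} < \eta\,\|\pi(\lambda,a)\|_{\ell^\infty}/b_i,
\end{align*}
again by item 1 of Proposition~\ref{prop: kappa and eta bounds}. This gives the required absolute convergence, so the interchange is legitimate and $\tilde Q\tilde P = I$ on $Y$. The identity $\tilde P\tilde Q=I$ follows by the symmetric argument, with the roles of $\kappa$ and $\eta$ exchanged.
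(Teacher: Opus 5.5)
Your proposal is correct and shares the paper's core: invert the $N\times N$ Cauchy matrix \eqref{eqn: Cauchy matrix} via \eqref{eqn: Cauchy inverse} to get the truncated identity with $\pi(\cdot,\cdot;N)$ in place of $\pi$, then let $N\to\infty$ using $0<r_j(N)<1$.

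Where you differ is in how the limit is taken and how far the argument goes.
\begin{itemize}
\item \textbf{The limit.} The paper proves $\tilde Q_i^\top(N)\to\tilde Q_i^\top$ in $Y^*$, via $\ell^1$-convergence of $Q_i^\top(N)$ and the $\eta$-bound of Proposition~\ref{prop: kappa and eta bounds}. It proves $\tilde P_j(N)\to\tilde P_j$ in $Y$, then uses continuity of the pairing. You apply dominated convergence to the scalar series, with a majorant of the form $C_{ik}(a_j+|\lambda_j|)$. This is more elementary and needs only \ref{cond: a_n and lambda_n exponential decay} and Proposition~\ref{Prop: pi(lambda, a) is uniformly bounded}.
\item \textbf{The passage to operators.} The paper stops at $\tilde Q_i^\top\tilde P_j=\delta_{ij}$, i.e.\ $\tilde Q\tilde P e_j=e_j$. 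By density and boundedness this gives $\tilde Q\tilde P=I$ for $X=\ell^p$ with $p<\infty$ or $X=c_0$. It does not give it for $X=\ell^\infty$, where finitely supported sequences are not dense. The paper also never treats $\tilde P\tilde Q$ separately, although a left inverse need not be a right inverse in infinite dimensions. Your Step~3 closes the first point: Tonelli, with both iterated sums controlled by item~1 of Proposition~\ref{prop: kappa and eta bounds}, works uniformly for every $X$, including $\ell^\infty$. Your explicit use of $GG^{-1}=I_N$ closes the second. Your version is therefore the more complete of the two.
\end{itemize}

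One small correction. No reordering is needed here, and none is permitted, since reordering could destroy \ref{cond: b_n exponential decay}--\ref{cond: a_n/b_n exponential decay}. Condition \ref{cond: a_n and lambda_n exponential decay} with $\alpha=1$ already forces $(a_n)$ and $(|\lambda_n|)$ to be strictly decreasing in the given indexing. It is this condition, not \ref{cond: dynamic ratio bounds}, that supplies the distinctness required by the Cauchy inverse formula and the monotonicity behind $|r_j(N)|\le 1$.
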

\noindent Collectively, Propositions \ref{prop: P and Q are bounded} and \ref{prop: P and Q are inverses} establish that $\tilde T$ is diagonalizable by $\tilde P$ and $\tilde Q$. We prove propositions \ref{prop: kappa and eta bounds}, \ref{prop: P and Q are bounded}, and \ref{prop: P and Q are inverses} in Subsections \S \ref{subsec: proof of kappa and eta bounds}, \S \ref{subsec: proof that P, Q are bounded}, and \S \ref{subsec: proof that P, Q are inverses}, respectively.

\subsection{Proof of Proposition \ref{Prop: pi(lambda, a) is uniformly bounded}}\label{subsec: proof that pi(lambda, a) is uniformly bounded}
\begin{proof}[Proof.]
    We show that $(\log|\pi_n(\lambda,a)|) \in \ell^\infty$ and $(\log|\pi_n(a, \lambda)|) \in \ell^\infty$. From definition \eqref{defn: pi(lambda, a)}, 
    \begin{align*}
        \log|\pi_n(\lambda,a)| = \sum_{\substack{m=1\\m\neq n}}^\infty \log \left| \frac{1 - \lambda_m/a_n}{1 - a_m/a_n} \right| = \sum_{m=1}^{n-1} \log \left| \frac{1 - \lambda_m/a_n}{1 - a_m/a_n} \right| + \sum_{m=n+1}^\infty \log \left|\frac{1 -\lambda_m/a_n}{1 - a_m/a_n} \right|.
    \end{align*}
    We provide upper bounds for the sums on the right hand side through the two cases below.

    \medskip
    \noindent \textit{Case 1: $m < n$.} Recall that $\rho \in (0,1)$ satisfies $a_m/a_n \leq \rho$ for $m\geq n$. Hence, for this case we have $a_m/a_n \geq \rho^{m - n} > 1$, which implies that
    \begin{align*}
        \left|\frac{1 -\lambda_m/a_n}{1 - a_m/a_n} \right| = \frac{1 + |\lambda_m|/a_n}{a_m/a_n - 1} &= \frac{a_n/a_m + |\lambda_m|/a_m}{1 - a_n/a_m} \\
        &= 1 + \frac{2a_n/a_m + |\lambda_m|/a_m -1}{1 - a_n/a_m} \\
        &\leq 1 + \frac{2\rho^{n-m} + \omega_m -1}{1 - \rho^{n-m}} \\&< 1 + \frac{2\rho^{n-m} + \omega_m -1}{1 - \rho}.
    \end{align*}
    Note that the last inequality holds because $(\omega_n - 1)$ is assumed to be a positive sequence by \ref{cond: dynamic ratio bounds}. It then follows that
    \begin{align*}
        \sum_{m=1}^{n-1} \log \left| \frac{1 - \lambda_m/a_n}{1 - a_m/a_n} \right| &\leq \sum_{m=1}^{n-1} \log \left( 1 + \frac{2\rho^{n-m} + \omega_m -1}{1 - \rho} \right) \\&\leq \frac{1}{1 - \rho} \sum_{m = 1}^{n-1} (2\rho^{n-m} + \omega_m - 1) \\ &< \frac{2 \rho}{(1 - \rho)^2} + \frac{ \|(\omega_n-1)\|_{\ell^1} }{1-\rho},
    \end{align*}
    where the last inequality follows from \ref{cond: dynamic ratio bounds}.

    \medskip
    \noindent \textit{Case 2: $m > n$.} Here, we have $a_m/a_n \leq \rho ^{m-n} < 1$. Hence, 
    \begin{align*}
        \left|\frac{1 -\lambda_m/a_n}{1 - a_m/a_n} \right| = \frac{1 + |\lambda_m|/a_n}{1 - a_m/a_n} &\leq \frac{1 + a_m\|\omega\|_{\ell^\infty}/a_n}{1 - a_m/a_n} \\
        &= 1 + \frac{(\|\omega\|_{\ell^\infty} + 1) a_m/a_n}{1 - a_m/a_n} \\
        &\leq 1 + \frac{(\|\omega\|_{\ell^\infty} + 1) \rho^{m-n}}{1 - \rho^{m-n}} \\
        &\leq  1 + \frac{(\|\omega\|_{\ell^\infty} + 1) \rho^{m-n}}{1 - \rho}.
    \end{align*}
    Therefore, we have
    \begin{align*}
        \sum_{m=n+1}^\infty \log \left|\frac{1 -\lambda_m/a_n}{1 - a_m/a_n} \right| &\leq \sum_{m=n+1}^\infty \log \left(  1 + \frac{(\|\omega\|_{\ell^\infty} + 1) \rho^{m-n}}{1 - \rho} \right) \\
        &\leq \frac{\|\omega\|_{\ell^\infty} + 1}{1 - \rho} \sum_{m=n+1}^\infty \rho^{m-n} = \frac{(\|\omega\|_{\ell^\infty} + 1)\rho}{(1 - \rho)^2}.
    \end{align*}
    Combining the above arguments, we conclude that for all $n\in \N$,
    \begin{align*}
        \log |\pi_n(\lambda,a)| &= \sum_{m=1}^{n-1} \log \left| \frac{1 - \lambda_m/a_n}{1 - a_m/a_n} \right| + \sum_{m=n+1}^\infty \log \left|\frac{1 -\lambda_m/a_n}{1 - a_m/a_n} \right| \\&< \frac{(\|\omega\|_{\ell^\infty} + 3)\rho}{(1- \rho)^2} + \frac{\|(\omega_n - 1)\|_{\ell^1}}{1-\rho}.
    \end{align*}
    Since $(|\lambda_n|)$ and $(a_n)$ both $(1,\rho)$-exponentially decay, and since $a_n/|\lambda_n| \leq 1/\gamma_n \leq \|\gamma^{-1}\|_{\ell^\infty}$ due to \ref{cond: dynamic ratio bounds}, we are able to use similar arguments as above to show that for all $n\in \N$,
    \begin{align*}
        \log|\pi_n(a, \lambda)| <  \frac{(\|\gamma^{-1}\|_{\ell^\infty} + 3)\rho}{(1- \rho)^2} + \frac{\|(1/\gamma_n - 1)\|_{\ell^1}}{1-\rho}.
    \end{align*}
    This completes the proof.
\end{proof}

%%%%%%%%%%%%%%%%%%%%%%%%%%%%%%%%%%%%%%%%%%%%%%%%
\subsection{Proof of Proposition \ref{prop: kappa and eta bounds}}\label{subsec: proof of kappa and eta bounds}
We establish below the first inequality of both \eqref{eqn: kappa and eta bound (j sum)} and \eqref{eqn: kappa and eta bound (i sum)}, i.e., the $\kappa$ bounds. Then, by symmetry, the second inequality of both \eqref{eqn: kappa and eta bound (j sum)} and \eqref{eqn: kappa and eta bound (i sum)} can be established using the same arguments but with $a$ and $\lambda$ swapping roles.
\begin{proof}[Proof.]
We begin by splitting the sum in the first inequality of \eqref{eqn: kappa and eta bound (j sum)} into two parts as follows:
\begin{align*}
    \sum_{j=1}^\infty \frac{(a_j + |\lambda_j|)/b_j}{(a_i + |\lambda_j|)/b_i} = \sum_{j=1}^{i - 1} \frac{(a_j + |\lambda_j|)/b_j}{(a_i + |\lambda_j|)/b_i} + \sum_{j=i}^\infty \frac{(a_j + |\lambda_j|)/b_j}{(a_i + |\lambda_j|)/b_i}.
\end{align*}
Looking at the first sum, we have the bound
\begin{align*}
    \sum_{j=1}^{i-1} \frac{(a_j + |\lambda_j|)/b_j}{(a_i + |\lambda_j|)/b_i} &< \sum_{j=1}^{i-1} \frac{(a_j + |\lambda_j|)/b_j}{|\lambda_j|/b_i} \\
    &\leq  (1 + \|\gamma^{-1}\|_{\ell^\infty}) \sum_{j=1}^{i-1} \frac{b_i}{b_j} \\
    &\leq (1 + \|\gamma^{-1}\|_{\ell^\infty})\beta \sum_{j=1}^{i-1}\nu^{i-j} < \frac{(1 + \|\gamma^{-1}\|_{\ell^\infty})\beta}{1-\nu},
\end{align*}
where the second inequality follows from \ref{cond: dynamic ratio bounds} and the third inequality follows from \eqref{defn: nu and mu}. Applying \ref{cond: dynamic ratio bounds} and \eqref{defn: nu and mu} in the same manner as above, the second sum is bounded by
\begin{align*}
    \sum_{j=i}^\infty \frac{(a_j + |\lambda_j|)/b_j}{(a_i + |\lambda_j|)/b_i} &< \sum_{j=i}^\infty \frac{(a_j + |\lambda_j|)/b_j}{a_i/b_i} \\
    &\leq (1 + \|\omega\|_{\ell^\infty}) \sum_{j=i}^\infty \frac{a_j/b_j}{a_i/b_i} \\
    &\leq (1 + \|\omega\|_{\ell^\infty})\alpha \sum_{j=i}^\infty \mu^{j-i} = \frac{(1 + \|\omega\|_{\ell^\infty})\alpha}{1-\mu}.
\end{align*}
In conclusion, we have shown that for all $i\in \N$,
\begin{align*}
    \sum_{j=1}^\infty \frac{(a_j + |\lambda_j|)/b_j}{(a_i + |\lambda_j|)/b_i} &= \sum_{j=1}^{i-1} \frac{(a_j + |\lambda_j|)/b_j}{(a_i + |\lambda_j|)/b_i} + \sum_{j=i}^\infty \frac{(a_j + |\lambda_j|)/b_j}{(a_i + |\lambda_j|)/b_i}  \\&< \frac{(1 + \|\gamma^{-1}\|_{\ell^\infty})\beta}{1-\nu} + \frac{(1 + \|\omega\|_{\ell^\infty})\alpha}{1-\mu} = \kappa,
\end{align*}
where the last equality follows from definition \eqref{defn: eta and kappa}. This establishes the $\kappa$ bound for \eqref{eqn: kappa and eta bound (j sum)}; we now prove the $\kappa$ bound for \eqref{eqn: kappa and eta bound (i sum)}. Using similar arguments as above, we have
\begin{align*}
    \sum_{i=1}^{j-1} \frac{(a_j + |\lambda_j|)/b_j}{(a_i + |\lambda_j|)/b_i} &< \sum_{i=1}^{j-1} \frac{(a_j + |\lambda_j|)/b_j}{a_i/b_i} 
    \leq (1 + \|\omega\|_{\ell^\infty})\alpha \sum_{i=1}^{j-1} \mu^{j-i} < \frac{(1 + \|\omega\|_{\ell^\infty})\alpha}{1-\mu},
\end{align*}
and
\begin{align*}
    \sum_{i=j}^\infty \frac{(a_j + |\lambda_j|)/b_j}{(a_i + |\lambda_j|)/b_i} &< \sum_{i=j}^\infty \frac{(a_j + |\lambda_j|)/b_j}{|\lambda_j|/b_i} \leq (1 + \|\gamma^{-1}\|_{\ell^\infty})\beta \sum_{i=j}^\infty \nu^{i-j} = \frac{(1 + \|\gamma^{-1}\|_{\ell^\infty})\beta}{1-\nu}.
\end{align*}
Hence, for all $j\in \N$,
\begin{align*}
    \sum_{i=1}^\infty \frac{(a_j + |\lambda_j|)/b_j}{(a_i + |\lambda_j|)/b_i} &= \sum_{i=1}^{j-1} \frac{(a_j + |\lambda_j|)/b_j}{(a_i + |\lambda_j|)/b_i} + \sum_{i=j}^\infty \frac{(a_j + |\lambda_j|)/b_j}{(a_i + |\lambda_j|)/b_i} \\& < \frac{(1 + \|\omega\|_{\ell^\infty})\alpha}{1-\mu} + \frac{(1 + \|\gamma^{-1}\|_{\ell^\infty})\beta}{1-\nu} = \kappa,
\end{align*}
which establishes the first inequality for \eqref{eqn: kappa and eta bound (i sum)}. This completes the proof.
\end{proof}

%%%%%%%%%%%%%%%%%%%%%%%%%%%%%%%%%%%%%%%%%%%%%%%%%
\subsection{Proof of Proposition \ref{prop: P and Q are bounded}}\label{subsec: proof that P, Q are bounded}
\begin{proof}[Proof.]
    We first establish that $\tilde P$ maps $Y$ to $Y$ and satisfies $\|\tilde P\|_{\mathcal{B}(Y)} \leq \kappa \|\pi(a, \lambda)\|_{\ell^\infty}$. Let $y \in Y$ and define
    \begin{align*}
        y' := \tilde Py = \left(\sum_{j = 1}^\infty \frac{(a_j - \lambda_j)\pi_j(a,\lambda)y_j}{a_i - \lambda_j} \right)_{i \in \N}.
    \end{align*}
    We show that
    \begin{align}
        \|y'\|_{Y} \leq \kappa \|\pi(a, \lambda)\|_{\ell^\infty} \|y\|_Y
        \label{eqn: y prime norm}
    \end{align}
    for the two following cases of $X$: (1) $X=\ell^\infty$ or $X=c_0$, and (2) $X=\ell^p$ for $1\leq p < \infty$.

    \medskip
    \noindent \textit{Case 1: $X=\ell^\infty$ or $c_0$}. For every $i\in \N$, we have
    \begin{align*}
        |b_i y'_i| = \left|\sum_{j = 1}^\infty \frac{(a_j - \lambda_j)\pi_j(a,\lambda)y_jb_i}{a_i - \lambda_j} \right| 
        &\leq \sum_{j = 1}^\infty \frac{(a_j + |\lambda_j|)|\pi_j(a,\lambda)|}{a_i + |\lambda_j|}|y_j b_i| \\
        &\leq \|\pi(a, \lambda)\|_{\ell^\infty} \sum_{j = 1}^\infty \frac{(a_j + |\lambda_j|)/b_j}{(a_i + |\lambda_j|)/b_i}|y_jb_j| \\
        &\leq \|\pi(a, \lambda)\|_{\ell^\infty} \|y\|_{Y} \sum_{j = 1}^\infty \frac{(a_j + |\lambda_j|)/b_j}{(a_i + |\lambda_j|)/b_i} \\
        &< \kappa \|\pi(a, \lambda)\|_{\ell^\infty} \|y\|_{Y},
    \end{align*}
    where the last inequality follows from \eqref{eqn: kappa and eta bound (j sum)}. Hence, we have shown that
    \begin{align*}
        \|y'\|_Y = \sup_{i\in \N} |b_i y'_i| < \kappa \|\pi(a, \lambda)\|_{\ell^\infty} \|y\|_Y.
    \end{align*}
    For the case where $X=c_0$, we must further show that $(b_n y'_n) \in c_0$ to conclude that $y'\in Y$. For this case of $X$, we have that $(b_n y_n) \in c_0$. Hence, for all $\epsilon > 0$, there exists a positive integer $N$ such that 
    \begin{align}
        |b_i y_i| < \frac{\epsilon}{2\kappa \|\pi(a, \lambda)\|_{\ell^\infty}}, \quad \text{for all }i\geq N.
        \label{eqn: b_n y_n epsilon bound}
    \end{align}
    Moreover, note that for any fixed $j\in \N$, we have
    \begin{align*}
        \lim_{i\to \infty} \frac{(a_j + |\lambda_j|)/b_j}{(a_i + |\lambda_j|)/b_i} = \frac{a_j + |\lambda_j|}{b_j} \lim_{i\to \infty} \frac{b_i}{a_i + |\lambda_j|} = 0,
    \end{align*}
    since $(a_n)$, $(b_n)$ are null sequences by hypothesis. It follows that there exists an integer $N' \geq N$, where $N$ is the integer satisfying \eqref{eqn: b_n y_n epsilon bound}, such that
    \begin{align}
        0 < \frac{(a_j + |\lambda_j|)/b_j}{(a_i + |\lambda_j|)/b_i} < \frac{\epsilon}{2N\|\pi(a, \lambda)\|_{\ell^\infty}\|y\|_Y}, \quad \text{for all } j \leq N \text{ and for all }i\geq N'.
        \label{eqn: sum term epsilon bound}
    \end{align}
    Therefore, for all $i \geq N'$, we have
    \begin{align*}
        |b_i y_i'| &\leq \|\pi(a,\lambda)\|_{\ell^\infty}\left[ \sum_{j = 1}^N \frac{(a_j + |\lambda_j|)/b_j}{(a_i + |\lambda_j|)/b_i} |b_j y_j| + \sum_{j = N+1}^\infty \frac{(a_j + |\lambda_j|)/b_j}{(a_i + |\lambda_j|)/b_i} |b_j y_j| \right]\\
        &< \frac{\epsilon}{2N\|y\|_Y} \sum_{j=1}^N |b_j y_j| + \frac{\epsilon}{2\kappa} \sum_{j = N+1}^\infty \frac{(a_j + |\lambda_j|)/b_j}{(a_i + |\lambda_j|)/b_i}
        < \frac{\epsilon}{2} + \frac{\epsilon}{2} = \epsilon,
    \end{align*}
    where the second inequality follows from \eqref{eqn: b_n y_n epsilon bound} and \eqref{eqn: sum term epsilon bound} and the last inequality from \eqref{eqn: kappa and eta bound (j sum)}. This completes the proof that $(b_ny'_n)\in c_0$ and, thus, completes the proof of Case 1.
    \medskip

    \noindent \textit{Case 2: $X = \ell^p$ for $1\leq p < \infty$}. Let $1 < q \leq \infty$ be the H\"older's conjugate to $p$, i.e., $q$ and $p$ satisfy $1/p + 1/q = 1$. For notational convenience, we define for each $i\in \N$,
    \begin{align*}
        \phi_i := \left(\left[ \frac{(a_j + |\lambda_j|)/b_j}{(a_i + |\lambda_j|)/b_i}\right]^\frac{1}{q} \right)_{j \in \N}, \quad \psi_i :=  \left(\left[ \frac{(a_j + |\lambda_j|)/b_j}{(a_i + |\lambda_j|)/b_i}\right]^\frac{1}{p} |b_jy_j| \right)_{j \in \N}.
    \end{align*} 
    Then, for each $i\in \N$, we have
    \begin{align*}
        |b_i y_i'| = \left|\sum_{j = 1}^\infty \frac{(a_j - \lambda_j)\pi_j(a,\lambda)y_jb_i}{a_i - \lambda_j} \right|
        &\leq \sum_{j = 1}^\infty \frac{(a_j + |\lambda_j|)|\pi_j(a, \lambda)|/b_j}{(a_i + |\lambda_j|)/b_i}|b_jy_j|\\
        &\leq \|\pi(a, \lambda)\|_{\ell^\infty} \sum_{j=1}^\infty \phi_i\psi_i = \|\pi(a, \lambda)\|_{\ell^\infty} \|\phi_i \psi_i\|_{\ell^1}.
    \end{align*}
    Note that $\|\phi_i \|_{\ell^q} < \kappa^{1/q}$ and $\|\psi_i \|_{\ell^p} < \kappa^{1/p} \|y\|_Y$ due to Proposition \ref{prop: kappa and eta bounds}. It follows that 
    \begin{align}
        |b_i y_i'| \leq \|\pi(a, \lambda)\|_{\ell^\infty} \|\phi_i \psi_i \|_{\ell^1} \leq  \|\pi(a, \lambda)\|_{\ell^\infty}\|\phi_i\|_{\ell^q} \|\psi_i \|_{\ell^p} < \kappa^\frac{1}{q} \|\pi(a, \lambda)\|_{\ell^\infty} \|\psi_i \|_{\ell^p},
        \label{eqn: Holder's bound}
    \end{align}
    where the second inequality follows from H\"older's inequality. Finally, it follows that
    \begin{align*}
        \|y'\|_Y^p = \sum_{i=1}^\infty |b_i y'_i|^p &< \kappa^{\frac{p}{q}} \|\pi(a,\lambda)\|_{\ell^\infty}^p \sum_{i=1}^\infty \|\psi_i \|_{\ell^p}^p \\
        &= \kappa^{\frac{p}{q}} \|\pi(a,\lambda)\|_{\ell^\infty}^p \sum_{i=1}^\infty \sum_{j=1}^\infty \frac{(a_j + |\lambda_j|)/b_j}{(a_i + |\lambda_j|)/b_i} |b_j y_j|^p \\
        &= \kappa^{\frac{p}{q}} \|\pi(a,\lambda)\|_{\ell^\infty}^p \sum_{j=1}^\infty |b_j y_j|^p \sum_{i=1}^\infty \frac{(a_j + |\lambda_j|)/b_j}{(a_i + |\lambda_j|)/b_i} \\
        &< \kappa^{\frac{p}{q} + 1} \|\pi(a,\lambda)\|_{\ell^\infty}^p \sum_{j=1}^\infty |b_j y_j|^p = \kappa^{\frac{p}{q} + 1} \|\pi(a,\lambda)\|_{\ell^\infty}^p \|y\|_Y^p,
    \end{align*}
    where the first inequality follows from \eqref{eqn: Holder's bound} and the last inequality from \eqref{eqn: kappa and eta bound (j sum)}. In conclusion, we have shown that
    \begin{align*}
         \|y'\|_Y < \kappa^{\frac{1}{q} + \frac{1}{p}} \|\pi(a,\lambda)\|_{\ell^\infty} \|y\|_Y = \kappa \|\pi(a,\lambda)\|_{\ell^\infty} \|y\|_Y,
    \end{align*}
    which establishes \eqref{eqn: y prime norm} for Case 2. 
    
    \medskip
    We now turn our attention to $\tilde Q$. For $y\in Y$, we re-define $y' := \tilde Q y$. To establish that $y'\in Y$ and that $\|y'\|_Y \leq \eta \|\pi(\lambda,a)\|_{\ell^\infty}\|y\|_Y$, due to the symmetry between $\tilde Q$ and $\tilde P$, we can use the same arguments as above but with $\|\pi(\lambda,a)\|_{\ell^\infty}$ replacing $\|\pi(a, \lambda)\|_{\ell^\infty}$ and the $\eta$ bounds of \eqref{eqn: kappa and eta bound (j sum)} and \eqref{eqn: kappa and eta bound (i sum)} replacing the $\kappa$ bounds. This completes the proof.
\end{proof}

%%%%%%%%%%%%%%%%%%%%%%%%%%%%%%%%%%%%%%%%%%%%%%%%%
\subsection{Proof of Proposition \ref{prop: P and Q are inverses}}\label{subsec: proof that P, Q are inverses}
\begin{proof}[Proof.]
    Let $\tilde P_j \in Y$ be the $j$th column of $\tilde P$ and $\tilde Q_i^\top \in Y^*$ be the $i$th row of $\tilde Q$. We establish below that
    \begin{align}
        \tilde Q_i^\top \tilde P_j = \delta_{ij}.
        \label{eqn: desired result}
    \end{align}
    For $N\in \N$, let $\pi(\lambda,a; N)$ and $\pi(a, \lambda; N)$ be given by \eqref{defn: N approx of pi sequence}.
    Similar to what was done in \S \ref{subsec: proof of prop: scaling properties}, we introduce approximations $\tilde P(N)$ and $\tilde Q(N)$ of $\tilde P$ and $\tilde Q$, respectively, as follows:
    \begin{align*}
        \tilde P(N) &= \left[\tilde P_{ij}(N) := \frac{(a_j - \lambda_j)\pi_j(a, \lambda; N)}{a_i - \lambda_j} \right]_{1 \leq i,j < \infty}, \\
        \tilde Q(N) &= \left[\tilde Q_{ij}(N) := \frac{(\lambda_j - a_j)\pi_j(\lambda,a; N)}{\lambda_i - a_j} \right]_{1 \leq i,j < \infty}.
    \end{align*}
    Next, we define the $N\times N$ matrices
    \begin{align*}
        \tilde Q'(N) &:= \left[\tilde Q_{ij}(N)\right]_{1 \leq i,j \leq N}, \quad
        \tilde P'(N) := \left[\tilde P_{ij}(N)\right]_{1\leq i,j\leq N}.
    \end{align*}
    By construction, we have
    \begin{align}
        \tilde Q_i^\top(N) \tilde P_j(N) = \tilde Q_i '^\top(N) \tilde P_j'(N).
        \label{eqn: QP (N) related to QP prime (N)}
    \end{align}

    % % option 1
    % \noindent Next, we define the $N\times N$ matrices $A:=G$ and $B:=G^{-1}$, where $G$ is the Cauchy matrix given by \eqref{eqn: Cauchy matrix} and $G^{-1}$ is its inverse given by \eqref{eqn: Cauchy inverse}. Note that
    % \begin{align*}
    %     \tilde Q_i'^\top(N) =  \frac{1}{(a_i-\lambda_i)\pi_i(a, \lambda)}B_i^\top, \quad \text{and} \quad 
    %     \tilde P_j'(N)  = (a_j - \lambda_j)\pi_j(a,\lambda)A_j.
    % \end{align*}
    % It follows that
    % \begin{align*}
    %     \tilde Q_i '^\top(N) \tilde  P_j'(N) = \frac{(a_j - \lambda_j)\pi_j(a,\lambda)}{(a_i - \lambda_i)\pi_i(a, \lambda)} B_i^\top A_j = \delta_{ij}.
    % \end{align*}

    % option 2
    \noindent Next, recall the $N \times N$ Cauchy matrix $G$ given by \eqref{eqn: Cauchy matrix} and its inverse $G^{-1}$ given by \eqref{eqn: Cauchy inverse}. We note that
    \begin{align*}
        \tilde Q_i'^\top(N) =  \frac{1}{(a_i-\lambda_i)\pi_i(a, \lambda)} e_i^\top G^{-1} \quad \text{and} \quad 
        \tilde P_j'(N)  = (a_j - \lambda_j)\pi_j(a,\lambda)G e_j,
    \end{align*}
    where $e_i^\top G^{-1}$ is the $i$th row of $G^{-1}$ and $Ge_j$ is the $j$th column of $G$. Hence,
    \begin{align*}
        \tilde Q_i '^\top(N) \tilde  P_j'(N) = \frac{(a_j - \lambda_j)\pi_j(a,\lambda)}{(a_i - \lambda_i)\pi_i(a, \lambda)} e_i^\top G^{-1} Ge_j = \delta_{ij}.
    \end{align*}
    
    \noindent Applying this result to \eqref{eqn: QP (N) related to QP prime (N)}, we have thus shown that
    \begin{align}
        \tilde Q_i^\top(N) \tilde P_j(N) = \delta_{ij}.
        \label{eqn: so close to the end}
    \end{align}
    
    Our final step of the proof is to show that $\tilde Q_i^\top(N) \to \tilde Q_i^\top$ and $\tilde P_j(N) \to \tilde P_j$ as $N\to \infty$. We establish the strong convergence of $\tilde Q_i^\top(N) \to \tilde Q_i^\top$ below. Due to the symmetry between $\tilde P$ and $\tilde Q$, the strong convergence of $\tilde P_j(N) \to \tilde P_j$ can be shown using similar arguments.

    \medskip
    \noindent \textit{Strong convergence of $\tilde Q_i^\top(N)$.}
    We prove here that
    \begin{align}
        \lim_{N\to \infty} \|\tilde Q_i^\top - \tilde Q_i^\top(N)\|_{Y^*} = 0.
        \label{eqn: strong convergence of Q(N)}
    \end{align}
    Let $Q_i^\top := (\tilde Q_{ij}/b_j)_{j\in \N}$ and $Q_i^\top (N) := (Q_{ij}(N))_{j\in \N}$. In order to establish \eqref{eqn: strong convergence of Q(N)}, it suffices to show that
    \begin{align*}
        \lim_{N\to \infty} \|Q_i^\top - Q_i^\top(N) \|_{\ell^1} = 0.
    \end{align*}
    For each $i\in \N$, we have that $Q_i^\top \in \ell^1$, as shown below:
    \begin{align*}
        \| Q_i^\top \|_{\ell^1} = \sum_{j = 1}^\infty |\tilde Q_{ij}|/b_j 
        &= \sum_{j=1}^\infty \frac{(|\lambda_j| + a_j) |\pi_j(\lambda,a)| /b_j}{|\lambda_i| + a_j} \\
        &\leq \frac{\|\pi(\lambda,a)\|_{\ell^\infty}}{b_i} \sum_{j=1}^\infty \frac{(|\lambda_j| + a_j)/b_j}{(|\lambda_i| + a_j)/b_i} \\
        &< \frac{\eta \|\pi(\lambda,a)\|_{\ell^\infty}}{b_i},
    \end{align*}
    where the last inequality follows from \eqref{eqn: kappa and eta bound (j sum)}. Hence, for any $\epsilon > 0$, there exists a positive integer $N'$ such that
    \begin{align}
        \sum_{j = N' + 1}^\infty | Q_{ij}| \leq \frac{\epsilon}{2},
        \label{eqn: Q tilde is in ell 1}
    \end{align}
    Moreover, using similar arguments as those used to prove Lemma \ref{lem: convergence of Q(N)} in \S \ref{subsubsec: convegence of Q(N)}, there exists an integer $N_{\epsilon} \geq N'$, where $N'$ satisfies \eqref{eqn: Q tilde is in ell 1}, such that
    \begin{align}
        0 < 1- \frac{\pi_j(\lambda,a; N)}{ \pi_j(\lambda,a)} \leq \frac{\epsilon}{2\|Q_i^\top \|_{\ell^1}}, \quad \text{ for all } j\leq N' \text{ and for all } N \geq N_\epsilon.
        \label{eqn: that other one I need}
    \end{align}

    \noindent Then, using the same arguments that established item 1 of Lemma \ref{lem: convergence of Q(N)}, but for the particular case where $q=1$ and with \eqref{eqn: Q tilde is in ell 1} and \eqref{eqn: that other one I need} replacing \eqref{eqn: existence of N prime} and \eqref{eqn: existence of N epsilon}, respectively, it follows that for all $N\geq N_{\epsilon}$, 
    \begin{align*}
        \| Q_i^\top - Q_i^\top(N) \|_{\ell^1} < \epsilon.
    \end{align*}
     This completes the proof that $\lim_{N\to \infty} \| Q_i^\top - Q_i^\top(N) \|_{\ell^1} =0$, which thus establishes \eqref{eqn: strong convergence of Q(N)}. As noted earlier, due to the symmetry between $\tilde Q$ and $\tilde P$, we are able to show that
    \begin{align*}
        \lim_{N\to \infty} \|\tilde P_j - \tilde P_j(N)\|_Y = 0
    \end{align*}
    using similar arguments. Finally, applying the above convergence results to \eqref{eqn: so close to the end}, we have
    \begin{align*}
        \tilde Q_i^\top \tilde P_j = \lim_{N\to \infty} \tilde Q_i^\top(N) \tilde P_j(N) = \delta_{ij},
    \end{align*}
    which establishes the desired result \eqref{eqn: desired result} and completes the proof.
\end{proof}

%%%%%%%%%%%%%%%% CONCLUSION %%%%%%%%%%%%%%%%

\section{Conclusion}
In this paper, we have provided a partial solution to the following open problem: when is it possible to stabilize a discrete linear ensemble system via feedback control? In particular, we addressed pole placement and feedback diagonalizability, which together imply feedback stabilization, as shown in Corollary \ref{cor: closed-loop stability}. In Theorem \ref{thm: nec cond}, we have established that for pole placement and feedback diagonalizability to be feasible, it must hold that the following sequences uniformly exponentially decay: $(a_n)$, $(|\lambda_n|)$, $(|b_n|)$ and, under the additional assumption that $(a_n)$ does not decay faster than exponentially, $(a_n/|b_n|)$. Conversely, in Theorem \ref{thm: suff cond} we have proven that the uniform exponential decay of these sequences, along with a condition which, roughly speaking, says that $|\lambda_n|/a_n$ is uniformly bounded above and below and converges to one, guarantees that pole placement and feedback diagonalizability are feasible. The proofs of these two theorems are provided in Sections \S \ref{sec: proof of nec cond} and \S \ref{sec: suff thm proof}, respectively. Note that our proofs are constructive, i.e., we provide an explicit formula for the feedback law and compute the diagonalizing linear operators. We conclude by posing an open problem for future work. Is the uniform exponential decay of $(|b_n|)$ and $(a_n/|b_n|)$ also necessary for feedback stabilizability, not merely pole placement and feedback diagonalizability?

%%%%%%%%%%%%%%%% bibliography %%%%%%%%%%%%%%%%%
\printbibliography

\end{document}